\newtheorem{theorem}{Theorem}[section]
\newtheorem{lemma}{Lemma}[section]
\newtheorem{conjecture}{Conjecture}[section]
\newtheorem{remark}{Remark}[section]
\newcommand{\ord}{\text{ord}}
\begin{document}
\title[Hybrid Euler-Hadamard product]{Hybrid Euler-Hadamard product for quadratic Dirichlet L-functions in function fields}
\author{H. M. Bui and Alexandra Florea}
\address{School of Mathematics, University of Manchester, Manchester M13 9PL, UK}
\email{hung.bui@manchester.ac.uk}
\address{Department of Mathematics, Stanford University, Stanford CA 94305, USA}
\email{amusat@stanford.edu}

\allowdisplaybreaks

\begin{abstract}
We develop a hybrid Euler-Hadamard product model for quadratic Dirichlet $L$--functions over function fields (following the model introduced by Gonek, Hughes and Keating for the Riemann-zeta function). After computing the first three twisted moments in this family of $L$--functions, we provide further evidence for the conjectural asymptotic formulas for the moments of the family.
\end{abstract}
\maketitle

\section{Introduction}

An important and fascinating theme in number theory is the study of moments of the Riemann zeta-function and families of $L$-function. In this paper, we consider the moments of quadratic Dirichlet $L$-functions in the function field setting. Denote by $\mathcal{H}_{2g+1}$ the space of monic, square-free polynomials of degree $2g+1$ over $\mathbb{F}_q[x]$. We are interested in the asymptotic formula for the $k$-th moment,
\[
I_k(g)=\frac{1}{|\mathcal{H}_{2g+1}|}\sum_{D\in\mathcal{H}_{2g+1}}L(\tfrac{1}{2},\chi_D)^k,
\]
as $g\rightarrow\infty$.

In the corresponding problem over number fields, the first and second moments have been evaluated by Jutila [\textbf{\ref{J}}], with subsequent improvements on the error terms by Goldfeld and Hoffstein [\textbf{\ref{GH}}], Soundararajan [\textbf{\ref{S}}] and Young [\textbf{\ref{Y}}], and the third moment has been computed by Soundararajan [\textbf{\ref{S}}]. Conjectural asymptotic formulas for higher moments have also been given, being based on either random matrix theory [\textbf{\ref{KS}}] or the ``recipe'' [\textbf{\ref{CFKRS}}].

Using the idea of Jutila [\textbf{\ref{J}}], Andrade and Keating [\textbf{\ref{AK}}] obtained the asymptotic formula for $I_1(g)$ when $q$ is fixed and $q\equiv1(\textrm{mod}\ 4)$. They explicitly computed the main term, which is of size $g$, and bounded the error term by $O\big(q^{(-1/4+\log_q2)(2g+1)}\big)$. This result was recently improved by Florea [\textbf{\ref{F1}}] with a secondary main term and an error term of size $O_\varepsilon\big(q^{-3g/2+\varepsilon g}\big)$. Florea's approach is similar to Young's [\textbf{\ref{Y}}], but in the function field setting, it is striking that one can surpass the square-root cancellation. Florea [\textbf{\ref{F23}}, \textbf{\ref{F4}}] later also provided the asymptotic formulas for $I_k(g)$ when $k=2,3,4$.

For other values of $k$, by extending the Ratios Conjecture to the function field setting, Andrade and Keating [\textbf{\ref{AK2}}] proposed a general formula for the integral moments of quadratic Dirichlet $L$-functions over function fields. Concerning the leading terms, their conjecture reads

\begin{conjecture}
For any $k\in\mathbb{N}$ we have
\[
I_k(g)\sim 2^{-k/2}\mathcal{A}_k\frac{G(k+1)\sqrt{\Gamma(k+1)}}{\sqrt{G(2k+1)\Gamma(2k+1)}}(2g)^{k(k+1)/2}
\]
as $g\rightarrow\infty$, where 
\[
\mathcal{A}_k=\prod_{P\in\mathcal{P}}\bigg[\bigg(1-\frac{1}{|P|}\bigg)^{k(k+1)/2}\bigg(1+\bigg(1+\frac{1}{|P|}\bigg)^{-1}\sum_{j=1}^{\infty}\frac{\tau_{k}(P^{2j})}{|P|^j}\bigg)\bigg]
\]
with $\tau_k(f)$ being the $k$-th divisor function, and $G(k)$ is the Barnes $G$-function.
\label{ak}
\end{conjecture}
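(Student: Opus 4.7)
The statement is a \emph{conjecture}, so my plan is not to prove it outright but to derive it heuristically from a hybrid Euler-Hadamard decomposition, in the spirit of Gonek-Hughes-Keating, and to back up the derivation with explicit twisted moment calculations; this matches the paper's stated goal of providing ``further evidence''. The first step is to establish, for a cutoff parameter $X\geq 1$, a factorisation
\[
L(\tfrac{1}{2},\chi_D) = P_X(\tfrac{1}{2},\chi_D)\, Z_X(\tfrac{1}{2},\chi_D),
\]
where $P_X$ is a partial Euler product over irreducibles of degree $\leq X$ and $Z_X$ is a partial Hadamard product over the nontrivial zeros of $L(s,\chi_D)$, both smoothed by a suitable test function. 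In the function field setting the explicit formula is already finite, so this decomposition should be particularly clean.

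Next I would compute the $k$-th moment of $P_X$ alone over $\mathcal{H}_{2g+1}$. Expanding $P_X^{\,k}$ as a Dirichlet series in generalised divisor functions $\tau_k$ and using the orthogonality of quadratic characters $\chi_D$, only the ``square'' coefficients $\tau_k(P^{2j})$ should survive on average, which leads, after standard Euler-product manipulations, to precisely the arithmetic factor $\mathcal{A}_k$ multiplied by a controlled function of $X$. For the Hadamard piece $Z_X$, I would invoke the symplectic symmetry of the quadratic Dirichlet family and model $Z_X$ by a truncated characteristic polynomial of a Haar-random $\mathrm{USp}(2g)$ matrix; a Keating-Snaith style evaluation of symplectic characteristic-polynomial moments then produces the random matrix factor
\[
2^{-k/2}\,\frac{G(k+1)\sqrt{\Gamma(k+1)}}{\sqrt{G(2k+1)\Gamma(2k+1)}}\,(2g)^{k(k+1)/2},
\]
modulated by an $X$-dependent term designed to cancel the one coming from $P_X$ in the joint limit.

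A ``splitting conjecture'' then asserts that the product of these two moments reconstructs $I_k(g)$ as $g\to\infty$ followed by $X\to\infty$, yielding the stated asymptotic. The principal obstacle is precisely this splitting conjecture, which is out of reach unconditionally and is the standard article of faith in the Gonek-Hughes-Keating programme. To give honest evidence one should therefore compute the twisted moments
\[
\frac{1}{|\mathcal{H}_{2g+1}|}\sum_{D\in\mathcal{H}_{2g+1}}L(\tfrac{1}{2},\chi_D)^k\,\chi_D(\ell)
\]
for $k=1,2,3$ and check, as functions of the auxiliary polynomial $\ell$, that they agree with what the hybrid model predicts. The hardest calculation will be the third twisted moment, where the divisor sums are combinatorially intricate and one must extract better than square-root cancellation on the off-diagonal contributions --- a regime in which Florea's Poisson summation machinery for this family is the natural starting point.
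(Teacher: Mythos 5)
Your proposal follows essentially the same programme as the paper: establish the hybrid Euler--Hadamard factorisation (Theorem \ref{HEH}), compute the moments of $P_X$ rigorously to extract $\mathcal{A}_k$ (Theorem \ref{theoremP}), model $Z_X$ by a Haar-random symplectic matrix to obtain the random matrix factor (Conjecture \ref{conjectureZ} and Section 7), and posit the Splitting Conjecture to glue the two, with the first three twisted moments (Theorems \ref{tfm}--\ref{ttm}, built on Florea's Poisson-summation machinery) supplying the verification for $k=1,2,3$. There is no substantive divergence from the paper's reasoning.
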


\begin{remark}\emph{An equivalent form of $\mathcal{A}_k$ is}
\begin{equation*}
\mathcal{A}_{k}=\prod_{P\in\mathcal{P}}\bigg(1-\frac{1}{|P|}\bigg)^{k(k+1)/2}\bigg(1+\frac{1}{|P|}\bigg)^{-1}\bigg(\frac12\bigg(1-\frac{1}{|P|^{1/2}}\bigg)^{-k}+\frac12\bigg(1+\frac{1}{|P|^{1/2}}\bigg)^{-k}+\frac{1}{|P|}\bigg).
\end{equation*}
\end{remark}

Beside random matrix theory and the recipe, another method to predict asymptotic formulas for moments comes from the hybrid Euler-Hadamard product for the Riemann zeta-function developed by Gonek, Hughes and Keating [\textbf{\ref{GHK}}]. Using a smoothed form of the explicit
formula of Bombieri and Hejhal [\textbf{\ref{BH}}], the value of the Riemann zeta-function at a height $t$ on the critical line can be approximated as a partial Euler product multiplied by a partial Hadamard product over the nontrivial zeros close to $1/2+it$. The partial Hadamard product is expected to be modelled by the characteristic polynomial of a large random unitary matrix as it involves only local information about the zeros. Calculating the moments of the
partial Euler product rigorously and making an assumption (which can be proved in certain cases) about the independence of the two products, Gonek, Hughes and Keating then reproduced the conjecture for the moments of the Riemann zeta-function first put forward by Keating and Snaith [\textbf{\ref{KS2}}]. The hybrid Euler-Hadamard product model has been extended to various cases [\textbf{\ref{BK}}, \textbf{\ref{BK2}}, \textbf{\ref{D}}, \textbf{\ref{H}}, \textbf{\ref{BGM}}].

In this paper, we give further support for Conjecture 1.1 using the idea of Gonek, Hughes and Keating. Along the way, we also derive the first three twisted moments of quadratic Dirichlet $L$-functions over function fields.

\section{Statements of results}

Throughout the paper we assume $q$ is fixed and $q\equiv 1(\textrm{mod}\ 4)$. All theorems still hold for all $q$ odd by using the modified auxiliary lemmas in function fields as in [\textbf{\ref{BF}}], but we shall keep the assumption for simplicity. Let $\mathcal{M}$ be the set of monic polynomials in $\mathbb{F}_q[x]$, $\mathcal{M}_n$ and $\mathcal{M}_{\leq n}$ be the sets of those of degree $n$ and degree at most $n$, respectively. The letter $P$ will always denote a monic irreducible polynomial over $\mathbb{F}_q[x]$.  The set of monic irreducible polynomials is denoted by $\mathcal{P}$. For a polynomial $f\in \mathbb{F}_q[x]$, we denote its degree by $d(f)$, its norm $|f|$ is defined to be $q^{d(f)}$, and the von Mangoldt function is defined by
$$ \Lambda(f) = 
\begin{cases}
d(P) & \mbox{ if } f=cP^j \text{ for some }c \in \mathbb{F}_q^{\times}\ \text{and}\ j\geq 1, \\
0 & \mbox{ otherwise. }
\end{cases}
$$

Note that
$$|\mathcal{H}_d| = 
\begin{cases}
q & \mbox{ if } d=1, \\
q^{d-1}(q-1) & \mbox{ if } d \geq 2.
\end{cases}
$$
For any function $F$ on $\mathcal{H}_{2g+1}$, the expected value of $F$ is defined by
$$ \Big\langle F \Big\rangle_{\mathcal{H}_{2g+1}} := \frac{1}{| \mathcal{H}_{2g+1} |} \sum_{D \in \mathcal{H}_{2g+1}} F(D).$$

The Euler-Hadamard product we use, which is proved in Section 4, takes the following form.

\begin{theorem}\label{HEH}
Let $u(x)$ be a real, non-negative, $C^\infty$-function with mass $1$ and compactly supported on $[q,q^{1+1/X}]$. Let
\begin{equation*}
U(z)=\int_{0}^{\infty}u(x)E_{1}(z\log x)dx,
\end{equation*}
where $E_{1}(z)$ is the exponential integral, $E_{1}(z)=\int_{z}^{\infty}e^{-x}/xdx$. Then for $\emph{Re}(s)\geq0$ we have
\begin{equation*}
L(s,\chi_D)=P_{X}(s,\chi_D)Z_{X}(s,\chi_D),
\end{equation*}
where
\begin{equation*}
P_{X}(s,\chi_D)=\exp\bigg( \sum_{\substack{f\in\mathcal{M}\\d(f)\leq X}}\frac{\Lambda(f)\chi_D(f)}{|f|^{s}d(f)}\bigg)
\end{equation*} and
\begin{equation*}
Z_{X}(s,\chi_D)=\exp\Big(-\sum_{\rho}U\big((s-\rho)\ X\big)\Big),
\end{equation*}
where the sum is over all the zeros $\rho$ of $L(s,\chi_D)$.
\end{theorem}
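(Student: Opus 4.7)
The plan is to verify the equivalent logarithmic identity
\begin{equation*}
\log L(s,\chi_D)=\sum_{\substack{f\in\mathcal{M}\\d(f)\leq X}}\frac{\Lambda(f)\chi_D(f)}{d(f)|f|^{s}}-\sum_{\rho}U\bigl((s-\rho)X\bigr),
\end{equation*}
first in the half-plane $\mathrm{Re}(s)>1/2$ where the expansions below are absolutely convergent, and then to extend it to $\mathrm{Re}(s)\geq 0$ by analytic continuation. Since $L(s,\chi_D)$ is a polynomial of degree $2g$ in $q^{-s}$, I would write $L(s,\chi_D)=\prod_{j=1}^{2g}(1-\alpha_jq^{-s})$ with $\alpha_j=q^{\rho_j}$ and compare the two expansions of $\log L$ to obtain the trace formula $\sum_{f\in\mathcal{M}_n}\Lambda(f)\chi_D(f)=-\sum_j\alpha_j^{n}$. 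The terms $n\leq X$ in the zero-side expansion then reproduce $\log P_X(s,\chi_D)$ exactly, so the theorem reduces to proving
\begin{equation*}
\sum_\rho U\bigl((s-\rho)X\bigr)=\sum_{j=1}^{2g}\sum_{n>X}\frac{q^{n(\rho_j-s)}}{n}.
\end{equation*}

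The crucial observation is that $L(s,\chi_D)$, viewed as a function of $s$, is periodic with period $2\pi i/\log q$, so its full zero set is $\{\rho_j+2\pi ik/\log q:1\leq j\leq 2g,\;k\in\mathbb{Z}\}$. I would therefore split $\sum_\rho=\sum_j\sum_{k\in\mathbb{Z}}$ and, using $E_1(w)=\int_1^\infty e^{-wt}\,dt/t$, rewrite
\begin{equation*}
\sum_{k\in\mathbb{Z}}E_1\Bigl(\bigl(s-\rho_j-\tfrac{2\pi ik}{\log q}\bigr)X\log x\Bigr)=\int_1^\infty\frac{x^{-(s-\rho_j)Xt}}{t}\sum_{k\in\mathbb{Z}}e^{2\pi ik\cdot Xt(\log x)/\log q}\,dt.
\end{equation*}
The Poisson summation formula $\sum_{k\in\mathbb{Z}}e^{2\pi iky}=\sum_{n\in\mathbb{Z}}\delta(y-n)$ collapses the inner sum into a Dirac comb supported on $Xt(\log x)/\log q\in\mathbb{Z}$; evaluating the delta against $dt/t$ contributes a weight $1/n$, and $x^{-(s-\rho_j)Xt}$ collapses to $q^{n(\rho_j-s)}$. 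The constraints $t\geq 1$ and $(\log x)/\log q\in[1,1+1/X]$ (coming from the support of $u$) force the surviving integers to satisfy $n\geq\lceil Xt(\log x)/\log q\rceil$, so after integrating against $u(x)$ and using $\int u=1$ only the integers $n\geq X+1$ remain with full weight, producing $\sum_{n>X}q^{n(\rho_j-s)}/n$ for each $j$. Summing over $j$ and invoking the trace formula once more gives the required identity.

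The principal technical obstacle is justifying the Poisson interchange, since the sum $\sum_k U\bigl((s-\rho_j-2\pi ik/\log q)X\bigr)$ is not absolutely convergent from the bare estimate $|E_1(w)|\lesssim|w|^{-1}$. This is overcome by using the smoothness of $u$ and repeated integration by parts to show that $U(w)$ decays faster than any polynomial in $|\mathrm{Im}(w)|$, which both makes the $k$-sum absolutely convergent and legitimises the Fubini step. One can alternatively truncate the $k$-sum, verify the delta evaluation for each integer $n$ directly, and pass to the limit. Once the identity is proved for $\mathrm{Re}(s)>1/2$, the extension to $\mathrm{Re}(s)\geq 0$ is by analytic continuation: $L(s,\chi_D)$ is entire, and the same decay estimates make $Z_X(s,\chi_D)$ holomorphic in the stated half-plane.
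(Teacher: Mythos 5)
Your argument is correct in outline, and it takes a genuinely different route from the paper. The paper follows Gonek--Hughes--Keating and Bombieri--Hejhal: it first proves a smoothed explicit formula for $-\tfrac{L'}{L}(s,\chi_D)$ (Lemma \ref{explicit}) by a contour integral against the Mellin transform $\widetilde{u}(1+zX)\,dz/z$, and then integrates that formula in $s$ to obtain $L=P_XZ_X$, identifying the prime factor via $v(q^{d(f)/X})$. You instead exploit structure special to function fields: $\mathcal{L}(u,\chi_D)$ is a polynomial of degree $2g$, so $\log L$ has two exact expansions (over prime powers and over inverse roots), related by the trace formula $\sum_{f\in\mathcal{M}_n}\Lambda(f)\chi_D(f)=-\sum_j\alpha_j^n$; the zero set in $s$ is the vertically periodic lattice $\rho_j+2\pi ik/\log q$, and Poisson summation over $k$ converts $\sum_\rho U((s-\rho)X)$ into exactly the degree-$>X$ tail of the prime sum. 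This is more elementary and self-contained (no logarithmic derivative, no contour shift, no appeal to the number-field-style explicit formula), at the price of the analytic care in the Poisson/Fubini step, which you correctly identify as the main burden. Two small cautions there: the rapid decay of $U$ in the imaginary direction (which does follow from integrating by parts in $x$, since $u$ is smooth with compact support in $(1,\infty)$) gives absolute convergence of the $k$-sum, but it does not by itself licence the Dirac-comb manipulation inside the $x$-integral, because for fixed $x$ the function $y\mapsto q^{-(s-\rho_j)y}y^{-1}\mathbf{1}_{y\geq X\log_q x}$ has a jump and its Fourier coefficients decay only like $1/|k|$; your alternative of truncating the $k$-sum symmetrically and passing to the limit (the jump points form a null set for the $x$-integration) is the version to actually carry out. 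Likewise, the continuation from $\mathrm{Re}(s)>1/2$ to $\mathrm{Re}(s)\geq0$ needs $E_1$ continued past the imaginary axis and the observation that $\exp(-U(z))$ is single-valued and $\sim Cz$ as $z\to0$ (this uses $\int u=1$), so that $Z_X$ vanishes at the zeros and the identity-principle argument applies on the slit region; this is precisely the point the paper disposes of in one sentence, so your level of detail is comparable to theirs.
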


As remarked in [\textbf{\ref{GHK}}], $P_X(s,\chi_D)$ can be thought of as the Euler product for $L(s,\chi_D)$ truncated to include polynomials of degree $\leq X$, and $Z_X(s,\chi_D)$ can be thought of as the Hadamard product for $L(s,\chi_D)$ truncated to include zeros within a distance $\lesssim 1/X$ from the point $s$. The parameter $X$ thus controls the relative contributions of the Euler and Hadamard products. Note that a similar hybrid product formula was developed independently by Andrade, Keating, Gonek in [\textbf{\ref{AKG}}].

In Section 5 we evaluate the moments of $P_X(\chi_D):=P_X(1/2,\chi_D)$ rigorously and prove the following theorem.

\begin{theorem}\label{theoremP}
Let $0<c<2$. Suppose that $X\leq (2-c)\log g/\log q$. Then for any $k\in\mathbb{R}$ we have
\[
\Big\langle P_X(\chi_D)^k \Big\rangle_{\mathcal{H}_{2g+1}}=2^{-k/2}\mathcal{A}_k\big(e^\gamma X\big)^{k(k+1)/2}+O\big( X^{k(k+1)/2-1}\big).
\]
\end{theorem}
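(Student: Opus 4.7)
The plan is to exploit the multiplicative structure of $P_X(\chi_D)^k$ over primes. Since $\chi_D(P)\in\{-1,0,1\}$, each local factor $A_P(\chi_D)$ in $P_X(\chi_D)^k=\prod_{d(P)\le X}A_P(\chi_D)$ depends only on $\chi_D(P)$, and three-point interpolation yields
\[
A_P(\chi_D)=1+u_P\,\chi_D(P^2)+v_P\,\chi_D(P),
\]
with $u_P=\tfrac12(a_P+b_P)-1$, $v_P=\tfrac12(a_P-b_P)$, where $a_P=\exp\bigl(k\sum_{jd(P)\le X}1/(j|P|^{j/2})\bigr)$ and $b_P=\exp\bigl(k\sum_{jd(P)\le X}(-1)^j/(j|P|^{j/2})\bigr)$ are the values of $A_P$ at $\chi_D(P)=\pm 1$. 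Expanding the product gives
\[
P_X(\chi_D)^k=\sum_{\substack{f,g\text{ squarefree},\,(f,g)=1\\ P\mid fg\,\Rightarrow\,d(P)\le X}}u_f v_g\,\chi_D(f^2 g),
\]
with $u_f=\prod_{P\mid f}u_P$ and $v_g=\prod_{P\mid g}v_P$, and I would average this identity against $D\in\mathcal{H}_{2g+1}$ term by term.

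For $g=1$ one has $\chi_D(f^2)=\mathbf{1}_{(f,D)=1}$, and a standard count of squarefree $D$ coprime to $f$ in $\mathbb{F}_q[x]$ gives $\langle\chi_D(f^2)\rangle_{\mathcal{H}_{2g+1}}=\prod_{P\mid f}|P|/(|P|+1)+O(|f|/q^{2g+1})$, whose error is negligible under the hypothesis $d(f)\le X\le(2-c)\log g/\log q$. Reassembling yields the Euler product
\[
\Pi(X)=\prod_{d(P)\le X}\frac{1}{1+1/|P|}\Bigl(\frac{a_P+b_P}{2}+\frac{1}{|P|}\Bigr).
\]
For $g\ne 1$, the polynomial $g$ is squarefree and nontrivial, hence a non-square, and $\langle\chi_D(f^2 g)\rangle$ enjoys the square-root-type cancellation available for quadratic character sums in $\mathbb{F}_q[x]$ via Poisson summation and the functional equation; together with the truncation $X\le(2-c)\log g/\log q$, this more than compensates for the growth of $\prod(1+|v_P|)$ and makes the off-diagonal contribution smaller than the claimed error.

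Asymptotic evaluation of $\Pi(X)$ proceeds by splitting the range at $d(P)=X/2$. For $d(P)\le X/2$ both $j=1$ and $j=2$ satisfy $jd(P)\le X$, so $a_P=(1-|P|^{-1/2})^{-k}$ and $b_P=(1+|P|^{-1/2})^{-k}$ up to an admissible tail error, and the $P$-th local factor of $\Pi$ equals the $P$-th factor of $\mathcal{A}_k$ times $(1-1/|P|)^{-k(k+1)/2}$; feeding this into the function-field Mertens theorem $\prod_{d(P)\le n}(1-1/|P|)^{-1}=e^\gamma n(1+O(1/n))$ gives a partial product of $\mathcal{A}_k(e^\gamma X/2)^{k(k+1)/2}(1+O(1/X))$. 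For $X/2<d(P)\le X$ only $j=1$ survives, so $(a_P+b_P)/2=\cosh(k/|P|^{1/2})=1+k^2/(2|P|)+O(|P|^{-2})$; this matches $(1-1/|P|)^{-k^2/2}$ to leading order, and Mertens on the shell $X/2<d(P)\le X$ (on which the Mertens partial product tends to $2$) supplies the compensating factor $2^{k^2/2}$. Combining,
\[
\Pi(X)\sim\mathcal{A}_k(e^\gamma X)^{k(k+1)/2}\cdot 2^{-k(k+1)/2}\cdot 2^{k^2/2}=2^{-k/2}\mathcal{A}_k(e^\gamma X)^{k(k+1)/2}.
\]

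The hardest step will be sharpening ``$\sim$'' to the quantitative remainder $O(X^{k(k+1)/2-1})$. Three ingredients must cooperate: (i) the Mertens theorem used with its $O(1/n)$ remainder rather than merely $o(1)$; (ii) Taylor control of the local factors on the transition shell $X/2<d(P)\le X$, so that each per-prime discrepancy from $(1-1/|P|)^{-k^2/2}$ is $O(|P|^{-2})$ and aggregates to $O(1/X)$ across the shell; and (iii) a bound on the off-diagonal in which the large geometric weights $\prod(1+|v_P|)$ are outmatched by the quadratic character-sum cancellation---this is precisely where the hypothesis $X\le(2-c)\log g/\log q$ enters. Achieving this sharp error, rather than merely identifying the leading-order asymptotic, is the principal technical obstacle.
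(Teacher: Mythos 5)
Your expansion is essentially the paper's own argument in different packaging: the three-point interpolation at each prime plays the role of the paper's approximation of $P_X(\chi_D)^k$ by the genuine Euler product $P^{*}_{k,X}$ with coefficients $\alpha_k(\ell)$; the diagonal (square) terms are handled by the same coprimality count and Mertens' theorem, and the off-diagonal by the same P\'olya--Vinogradov/Weil cancellation. Your local computations are correct: for $d(P)\leq X/2$ the factor of $\Pi(X)$ is exactly the local factor of $\mathcal{A}_k$ (in the form of the Remark after Conjecture 1.1) times $(1-1/|P|)^{-k(k+1)/2}$, the transition shell contributes $2^{k^2/2}(1+O(1/X))$, and the assembled main term $2^{-k/2}\mathcal{A}_k(e^\gamma X)^{k(k+1)/2}$ with relative error $1+O(1/X)$ matches the theorem.

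There is, however, a genuine gap in your treatment of the diagonal error. The polynomials $f$ (and the non-square parts) produced by expanding $\prod_{d(P)\leq X}\bigl(1+u_P\chi_D(P^2)+v_P\chi_D(P)\bigr)$ are only $X$-\emph{smooth}; their degrees are not bounded by $X$ but can be as large as $\sum_{d(P)\leq X}d(P)\asymp q^{X}$, which under the hypothesis $X\leq(2-c)\log g/\log q$ may be of size $g^{2-c}$ and exceeds $2g+1$ whenever $c\leq 1$. Your assertion that the error in $\langle\chi_D(f^2)\rangle_{\mathcal{H}_{2g+1}}$ is $O(|f|/q^{2g+1})$ and is ``negligible under the hypothesis $d(f)\leq X$'' therefore rests on a false premise, and for such large $f$ that bound is not even $o(1)$, let alone summable against the weights $|u_f|$; so as written the diagonal error control fails on part of the range $0<c<2$. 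The paper avoids precisely this by truncating the character expansion at $d(\ell)\leq\vartheta g$ \emph{before} averaging over $D$, using the decay of the coefficients as in \eqref{truncation}, and only then applying Lemma \ref{L5} and Lemma \ref{nonsq}. To repair your version you need either that truncation step, or a coprimality count uniform in $X$-smooth $f$ of arbitrary degree, e.g.\ by a contour shift to $|u|=q^{-1/2}$ in the generating function $\frac{1-qu^2}{1-qu}\prod_{P\mid f}(1+u^{d(P)})^{-1}$, which gives $\#\{D\in\mathcal{H}_{2g+1}:(D,f)=1\}=|\mathcal{H}_{2g+1}|\prod_{P\mid f}\bigl(1+\tfrac{1}{|P|}\bigr)^{-1}+O\Bigl(q^{g}\exp\bigl(O\bigl(\sum_{P\mid f}|P|^{-1/2}\bigr)\bigr)\Bigr)$; since $\sum_{d(P)\leq X}|P|^{-1/2}\ll q^{X/2}/X$, this error is $\ll q^{-g+\varepsilon g}$ after normalization and summation against $|u_f|$. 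The same degree caveat affects the $|\ell_1|^{\varepsilon}$ factor in your off-diagonal bound, though there the weights $|v_P|\asymp|P|^{-1/2}$ do absorb it, exactly as in the paper's estimate $\sum_{\ell\in S(X)}\tau_{|k|}(\ell)|\ell|^{-1/2+\varepsilon}\ll q^{\varepsilon g}$.
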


For the partial Hadamard product, $Z_X(\chi_D):=Z_X(1/2,\chi_D)$, we conjecture that

\begin{conjecture}\label{conjectureZ}
Let $0<c<2$. Suppose that $X\leq (2-c)\log g/\log q$ and $X,g\rightarrow\infty$. Then for any $k\geq0$ we have
\[
\Big\langle Z_X(\chi_D)^k \Big\rangle_{\mathcal{H}_{2g+1}}\sim \frac{G(k+1)\sqrt{\Gamma(k+1)}}{\sqrt{G(2k+1)\Gamma(2k+1)}}\Big(\frac{2g}{e^\gamma X}\Big)^{k(k+1)/2}.
\]
\end{conjecture}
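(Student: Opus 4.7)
The plan is to justify this conjecture via the random matrix theory philosophy of Katz and Sarnak, which predicts that the low-lying zeros of $L(s,\chi_D)$ for $D\in\mathcal{H}_{2g+1}$ are statistically modeled, as $g\to\infty$, by the eigenangles of a Haar-random matrix in the unitary symplectic group $USp(2g)$.

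First, I would introduce the random matrix analogue of $Z_X(\chi_D)$. For $A\in USp(2g)$ with eigenangles $\pm\theta_1,\ldots,\pm\theta_g\in[0,\pi]$, set
\[
Z_X^{\mathrm{RM}}(A):=\exp\bigg(-\sum_{n=1}^{g}\bigl[U(-i\theta_n X)+U(i\theta_n X)\bigr]\bigg).
\]
Exactly as $Z_X(s,\chi_D)$ depends only on the zeros of $L(s,\chi_D)$ within distance $\lesssim 1/X$ of $s$, the decay of $U(zX)$ away from $z=0$ means $Z_X^{\mathrm{RM}}(A)$ is controlled by the eigenangles closest to the origin.

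Second, I would compute $\mathbb{E}_{USp(2g)}\bigl[Z_X^{\mathrm{RM}}(A)^k\bigr]$ by the Weyl integration formula on $USp(2g)$, whose density involves the Jacobian $\prod_{j<l}(\cos\theta_j-\cos\theta_l)^2\prod_j\sin^2\theta_j$. Writing the integrand in the form $\prod_n f_X(\theta_n)$ with $f_X(\theta)=\exp(-kU(-i\theta X)-kU(i\theta X))$ reduces the expectation to a Hankel-type determinant whose symbol is $f_X$. Using the expansion $E_1(z)=-\gamma-\log z+O(z)$ as $z\to 0$ (so that $U(zX)=-\gamma-\log(zX)-\log\log q+o(1)$ for small $z$ in the regime $X\to\infty$), the large-$g$ asymptotics of this determinant should yield the Keating--Snaith symplectic moment formula with effective matrix size $2g/(e^\gamma X)$:
\[
\mathbb{E}_{USp(2g)}\bigl[Z_X^{\mathrm{RM}}(A)^k\bigr]\sim \frac{G(k+1)\sqrt{\Gamma(k+1)}}{\sqrt{G(2k+1)\Gamma(2k+1)}}\Big(\frac{2g}{e^\gamma X}\Big)^{k(k+1)/2}.
\]
The $e^\gamma$ factor arises directly from the Euler--Mascheroni constant in the expansion of $E_1$, which is exactly the same mechanism by which $e^\gamma$ enters the Mertens-type product of Theorem 2.2.

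Third, the Katz--Sarnak heuristic would transfer this random matrix computation to the arithmetic side, yielding the conjecture. The hard part --- and the reason the statement remains a conjecture rather than a theorem --- is that a rigorous transfer demands a quantitative form of the Katz--Sarnak law controlling the joint statistics of all low-lying zeros in the family $\{\chi_D\}_{D\in\mathcal{H}_{2g+1}}$ simultaneously and uniformly in $D$, to a precision sufficient to pin down $k$-th moments of $Z_X(\chi_D)$. Current unconditional tools, such as $n$-level density results with restricted test-function support, fall far short of what is needed. It is worth noting, however, that Conjecture 2.3 is fully consistent with combining Theorem 2.2 and Conjecture 1.1 via the Gonek--Hughes--Keating splitting conjecture $\langle L^k\rangle\sim\langle P_X^k\rangle\langle Z_X^k\rangle$: the three statements fit together precisely, which is the entire point of the hybrid model.
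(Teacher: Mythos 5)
Your heuristic is essentially the paper's own justification in Section 7.1: both model the $\gamma$'s by eigenangles of a Haar-random matrix in the symplectic group of size $2g$, reduce $\mathbb{E}_{2g}[Z_X^k]$ to a Hankel/Toeplitz determinant whose symbol carries a Fisher--Hartwig singularity of exponent $2k$ at $\theta=0$ (after a periodization step that the paper makes explicit), and invoke the Deift--Its--Krasovsky asymptotics to produce the Keating--Snaith symplectic constant with effective matrix size $2g/(e^\gamma X)$, the $e^\gamma$ entering exactly through the expansion $E_1(z)=-\gamma-\log z+O(z)$. The one substantive piece of supporting evidence you only gesture at is that the paper also verifies the conjecture unconditionally for $k=1,2,3$ (Theorem \ref{k123}, proved in Section 7.2 from the twisted-moment formulas), rather than merely observing mutual consistency of the three conjectures.
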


In Section 7 we shall provide some support for Conjecture \ref{conjectureZ} using the random matrix theory model as follows. The zeros of quadratic Dirichlet $L$-functions are believed to have the same statistical distribution as the eigenangles $\theta_n$ of $2N\times 2N$ random symplectic unitary matrices with respect to the Haar measure for some $N$. Equating the density of the zeros and the density of the eigenangles suggests that $N=g$. Hence the $k$-th moment of $Z_X(\chi_D)$ is expected to be asymptotically the same as $Z_X(\chi_D)^k$ when the zeros $\rho$ are replaced by the eigenangles $\theta_n$ and averaged over all $2g\times 2g$ symplectic unitary matrices. This random matrix calculation is carried out in Section 7.

We also manage to verify Conjecture \ref{conjectureZ} in the cases $k=1,2,3$. As, from Theorem \ref{HEH}, $Z_X(\chi_D)=L(\tfrac{1}{2},\chi_D)P_X(\chi_D)^{-1}$, that is the same as to establish the following theorem.

\begin{theorem}\label{k123}
Let $0<c<2$. Suppose that $X\leq (2-c)\log g/\log q$. Then we have
\begin{align*}
&\Big\langle L(\tfrac{1}{2},\chi_D)P_X(\chi_D)^{-1} \Big\rangle_{\mathcal{H}_{2g+1}}= \frac{1}{\sqrt{2}}\frac{2g}{e^\gamma X} + O\big(gX^{-2}\big),\\
&\Big\langle L(\tfrac{1}{2},\chi_D)^2P_X(\chi_D)^{-2} \Big\rangle_{\mathcal{H}_{2g+1}}= \frac{1}{12}\Big(\frac{2g}{e^\gamma X}\Big)^3+O\big(g^3X^{-4}\big)
\end{align*}
and
\[
\Big\langle L(\tfrac{1}{2},\chi_D)^3P_X(\chi_D)^{-3} \Big\rangle_{\mathcal{H}_{2g+1}}= \frac{1}{720\sqrt{2}}\Big(\frac{2g}{e^\gamma X}\Big)^6+O\big(g^6X^{-7}\big).
\]
\end{theorem}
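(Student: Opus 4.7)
The plan is to reduce the computation to twisted moments of $L(\tfrac{1}{2},\chi_D)$, which are proved independently in the paper. Using the exponential definition of $P_X(s,\chi_D)$, I would first expand
\[
P_X(\chi_D)^{-k}=\sum_{f\in\mathcal{M}}\frac{c_{k,X}(f)\chi_D(f)}{|f|^{1/2}},
\]
where the coefficients $c_{k,X}(f)$ arise by exponentiating the Dirichlet series $-k\sum_{d(f)\leq X}\Lambda(f)/(|f|^s d(f))$; they are supported on polynomials all of whose prime factors have degree $\leq X$ and satisfy divisor-type bounds. Interchanging summations gives
\[
\Big\langle L(\tfrac{1}{2},\chi_D)^k P_X(\chi_D)^{-k}\Big\rangle_{\mathcal{H}_{2g+1}}=\sum_f\frac{c_{k,X}(f)}{|f|^{1/2}}\Big\langle L(\tfrac{1}{2},\chi_D)^k\chi_D(f)\Big\rangle_{\mathcal{H}_{2g+1}},
\]
so the theorem reduces to (i) proving sufficiently precise formulas for the twisted $k$-th moment uniformly in $f$, and (ii) evaluating the resulting sum against $c_{k,X}(f)$.

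The first ingredient, the twisted $k$-th moment for $k=1,2,3$, is the main technical input and the paper's other contribution. Each is established via an approximate functional equation, opening the $k$-fold divisor sum, and applying function-field Poisson summation over squarefree $D$ (following Florea). The output has the form
\[
\Big\langle L(\tfrac{1}{2},\chi_D)^k\chi_D(f)\Big\rangle_{\mathcal{H}_{2g+1}}=M_k(g,f)+E_k(g,f),
\]
where $M_k(g,f)$ is a polynomial in $g$ of degree $k(k+1)/2$ whose coefficients are explicit arithmetic functions of $f$. The dominant contribution to the sum over $f$ comes from perfect squares $f=h^2$, because only there does $\chi_D(f)$ remain roughly constant on average; for non-square $f$ one gains an additional $|f|^{-1/2}$ from square-root cancellation in the character sum over $D$.

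Substituting these asymptotics, isolating the squares $f=h^2$, and using the function-field Mertens estimate $\prod_{d(P)\leq X}(1-|P|^{-1})^{-1}\sim e^\gamma X$, the main term becomes an Euler product over primes of degree $\leq X$ built from the local factors of $c_{k,X}(h^2)$ combined with the leading arithmetic coefficient of the $k$-th twisted moment at $h^2$. Multiplied by $g^{k(k+1)/2}$ from the leading coefficient in $g$, this should produce $(2g/e^\gamma X)^{k(k+1)/2}$ times the Barnes $G$-function constant $G(k+1)\sqrt{\Gamma(k+1)}/\sqrt{G(2k+1)\Gamma(2k+1)}$, which evaluates for $k=1,2,3$ to $1/\sqrt{2}$, $1/12$, and $1/(720\sqrt{2})$, matching the statement.

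The main obstacle lies in the twisted third moment: controlling its off-diagonal contributions and lower-order main terms with enough uniformity in $f$ that, after weighting by $c_{3,X}(f)/|f|^{1/2}$ and summing, they feed only into the claimed error $O(g^6X^{-7})$. A secondary difficulty is Euler-product bookkeeping: the local factors from $c_{k,X}(h^2)$ must combine exactly with those from the leading term of the twisted moment and with the function-field Mertens constants so as to reproduce the Barnes $G$-function constant. For $k=3$ this requires a triple divisor sum, and that is where the bulk of the technical work is concentrated.
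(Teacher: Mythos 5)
Your overall reduction---expanding $P_X(\chi_D)^{-k}$ as a Dirichlet series over $X$-smooth polynomials and feeding in the twisted moments $I_k(\ell;g)$---is exactly the route the paper takes (there $P_X^{-k}$ is first replaced by the finite Euler product $P^{*}_{-k,X}$ with coefficients $\alpha_{-k}(\ell)$, and the series is truncated at $d(\ell)\le\vartheta g$ with $\vartheta<(4-k)/2$ so that the twisted-moment error terms $O_\varepsilon\big(|\ell|^{1/2}q^{(k-4)g/2+\varepsilon g}\big)$ remain negligible; you will need some such truncation, since your interchange of summation otherwise runs over an infinite family of $\ell$ with errors growing in $|\ell|$). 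The genuine problem is the step where you ``isolate the squares $f=h^2$'' and discard non-square $f$ on the grounds of square-root cancellation. The twisted moment at a non-square $\ell=\ell_1\ell_2^2$ is \emph{not} an error term: it has a main term of full size $g^{k(k+1)/2}$, merely damped by $|\ell_1|^{-1/2}$ (this damping comes from the diagonal $f\ell=\square$ in the approximate functional equation, i.e.\ $f=f_1^2\ell_1$, not from cancellation in $\sum_D\chi_D(\ell)$). After weighting by $c_{k,X}(\ell)|\ell|^{-1/2}$, the non-square $\ell$ therefore contribute local factors of the shape $1-k\,\mathcal{B}_k(P)/|P|+\cdots$ at every prime with $d(P)\le X$, i.e.\ at exactly the same order as the square terms.

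This matters decisively for the answer. The factor $(e^\gamma X)^{-k(k+1)/2}$ in Theorem \ref{k123} arises precisely from the cancellation between the odd-power coefficients ($\alpha_{-k}(P)=-k$, $\alpha_{-k}(P^3)$) and the even-power and arithmetic factors inside each local factor, which collapses the Euler product to essentially $\prod_{d(P)\le X}\big(1-1/|P|\big)^{k(k+1)/2}$, as in the paper's evaluation of $J_{k,1}$ in \eqref{Jk1}. If you keep only $f=h^2$, then already for $k=1$ the local factors at $d(P)\le X/2$ reduce to $1+O(1/|P|^2)$ (note $\alpha_{-1}(P^2)=0$ there), so the product over primes converges and your main term is of size $\asymp g$ instead of $\asymp g/X$; in general you would be off by a factor of order $X^{k(k+1)/2}$ and would never produce the constants $1/\sqrt2$, $1/12$, $1/(720\sqrt2)$, which emerge only from this interplay. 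The fix is to evaluate the full sum over $\ell=\ell_1\ell_2^2$ (the paper organizes it by writing $\ell_1=\ell_1'\ell_3$, $\ell_2=\ell_2'\ell_3$, using that $\alpha_{-k}$ is supported on fourth-power-free polynomials for $k\le3$), and then carry out the explicit Euler-product collapse together with Mertens' theorem (Lemma \ref{mertens}).
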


Our Theorem \ref{theoremP} and Theorem \ref{k123} suggest that at least when $X$ is not too large relative to $q^g$, the $k$-th moment of $L(1/2,\chi_D)$ is asymptotic to the product of the moments of $P_X(\chi_D)$ and $Z_X(\chi_D)$ for $k=1,2,3$. We believe that this is true in general and we make the following conjecture.

\begin{conjecture}[Splitting Conjecture]
Let $0<c<2$. Suppose that $X\leq (2-c)\log g/\log q$ and $X,g\rightarrow\infty$. Then for any $k\geq0$ we have
\[
\Big\langle L(\tfrac12,\chi_D)^k \Big\rangle_{\mathcal{H}_{2g+1}}\sim \Big\langle P_X(\chi_D)^k \Big\rangle_{\mathcal{H}_{2g+1}}\Big\langle Z_X(\chi_D)^k \Big\rangle_{\mathcal{H}_{2g+1}}.
\]
\end{conjecture}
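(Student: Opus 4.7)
\emph{Proof proposal.} By Theorem \ref{HEH}, the pointwise identity $L(1/2,\chi_D)=P_X(\chi_D)Z_X(\chi_D)$ holds for every $D\in\mathcal{H}_{2g+1}$, so the Splitting Conjecture is precisely the assertion that $P_X(\chi_D)^k$ and $Z_X(\chi_D)^k$ decorrelate on average, i.e.\
\begin{equation*}
\Big\langle P_X(\chi_D)^k Z_X(\chi_D)^k \Big\rangle_{\mathcal{H}_{2g+1}} \sim \Big\langle P_X(\chi_D)^k \Big\rangle_{\mathcal{H}_{2g+1}} \Big\langle Z_X(\chi_D)^k \Big\rangle_{\mathcal{H}_{2g+1}}.
\end{equation*}
The Gonek--Hughes--Keating heuristic is that $P_X$ is a functional of $\chi_D(P)$ at monic irreducibles $P$ of degree $\leq X$, while $Z_X$ is determined by the zeros of $L(s,\chi_D)$ at distance $\lesssim 1/X$ from $1/2$; in the admissible range these two types of data should be asymptotically independent.

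A first consistency check supports the conjecture: multiplying the asymptotic in Theorem \ref{theoremP} by the prediction in Conjecture \ref{conjectureZ}, the factor $(e^\gamma X)^{k(k+1)/2}$ from the Euler side cancels the factor $(e^\gamma X)^{-k(k+1)/2}$ from the Hadamard side and reproduces exactly the Andrade--Keating leading term of Conjecture \ref{ak}, which must be independent of $X$. Moreover, for $k=1,2,3$ the Splitting Conjecture follows from results already in hand: Theorem \ref{k123} evaluates $\langle L(1/2,\chi_D)^k P_X(\chi_D)^{-k}\rangle$, which by the identity $L=P_X Z_X$ is nothing but $\langle Z_X(\chi_D)^k\rangle$, and multiplying by $\langle P_X(\chi_D)^k\rangle$ from Theorem \ref{theoremP} reproduces the product on the right-hand side up to the stated errors.

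For general $k\geq 4$, I would attempt to make the heuristic rigorous by expanding $P_X^k$ as a Dirichlet polynomial $\sum_f a_k(f)\chi_D(f)|f|^{-1/2}$ supported on monic polynomials built from irreducibles of degree $\leq X$, and $Z_X^k$ through the series expansion of the exponential of the Hadamard zero sum weighted by $U$; one then needs to show that the joint average over $D$ of a short character sum against a linear statistic of low-lying zeros factorizes as the product of averages, up to an acceptable error. The main obstacle is exactly this factorization, which is of the same depth as Conjecture \ref{ak} itself: an unconditional proof for $k\geq 4$ would imply the full moment conjecture via Theorem \ref{theoremP} together with the random matrix input of Conjecture \ref{conjectureZ}. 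Realistically one should expect only a conditional result, e.g.\ assuming Conjecture \ref{ak}, in which case Theorem \ref{theoremP} forces the asymptotic for $\langle Z_X^k\rangle$ and the splitting becomes a tautology, or assuming a strong enough symplectic random matrix model for the low-lying zeros of $L(s,\chi_D)$ that decouples them from the residues $\chi_D(P)$ at small primes.
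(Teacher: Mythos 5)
This statement is a conjecture in the paper: the authors do not prove it, and neither do you, as your own closing paragraph concedes. So there is no ``paper proof'' for your argument to diverge from; what you have written is a collection of supporting evidence, and it is essentially the same evidence the paper itself assembles --- the cancellation of the $(e^\gamma X)^{k(k+1)/2}$ factors between Theorem \ref{theoremP} and Conjecture \ref{conjectureZ} recovering Conjecture \ref{ak}, the verification for $k=1,2,3$ via Theorems \ref{theoremP} and \ref{k123}, and the Gonek--Hughes--Keating heuristic that $P_X$ (small-degree primes) and $Z_X$ (low-lying zeros) should decorrelate. On that level your proposal is aligned with the paper.

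Two points in your write-up need repair. First, your $k=1,2,3$ argument is circular as stated: multiplying $\langle Z_X(\chi_D)^k\rangle=\langle L(\tfrac12,\chi_D)^kP_X(\chi_D)^{-k}\rangle$ (Theorem \ref{k123}) by $\langle P_X(\chi_D)^k\rangle$ (Theorem \ref{theoremP}) produces, by definition, the right-hand side of the Splitting Conjecture; it says nothing about the left-hand side. The verification for $k\leq 3$ requires, in addition, the unconditional asymptotics for $I_k(g)$ due to Andrade--Keating and Florea, against which one checks that the product $2^{-k/2}\mathcal{A}_k\cdot\frac{G(k+1)\sqrt{\Gamma(k+1)}}{\sqrt{G(2k+1)\Gamma(2k+1)}}(2g)^{k(k+1)/2}$ matches; that external input must be cited explicitly. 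Second, your claim that assuming Conjecture \ref{ak} makes the splitting ``a tautology'' is wrong: knowing $\langle L(\tfrac12,\chi_D)^k\rangle$ and $\langle P_X(\chi_D)^k\rangle$ does not determine $\langle Z_X(\chi_D)^k\rangle$, which is a genuinely different average (a twisted, not a plain, moment); the decorrelation statement retains content even under Conjecture \ref{ak}. The correct logical relations are the ones the paper states: Theorem \ref{theoremP} together with Conjecture \ref{conjectureZ} and the Splitting Conjecture imply Conjecture \ref{ak}, and conversely Conjecture \ref{ak} plus Conjecture \ref{conjectureZ} plus Theorem \ref{theoremP} would yield the splitting --- but no single one of these hypotheses trivializes it.
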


Theorem \ref{theoremP}, Conjecture \ref{conjectureZ} and the Splitting Conjecture imply Conjecture 1.1.

To prove Theorem \ref{k123} requires knowledge and understanding about twisted moments of quadratic Dirichlet $L$-functions over function fields,
\[
I_k(\ell;g)=\Big\langle L(\tfrac12,\chi_D)^k\chi_D(\ell) \Big\rangle_{\mathcal{H}_{2g+1}}.
\] 
For that we shall compute the first three twisted moments in Section 6 and show that the following theorems hold.

\begin{theorem}[Twisted first moment]\label{tfm}
Let $\ell=\ell_1\ell_2^2$ with $\ell_1$ square-free. Then we have
\begin{align*}
I_1(\ell;g)=&\,\frac{\eta_1(\ell;1)}{|\ell_1|^{1/2}}\bigg(g-d(\ell_1)+1-\frac{\partial_u\eta_1}{\eta_1}(\ell;1)\bigg)+|\ell_1|^{1/6} q^{-4g/3}  P\big(g+d(\ell_1)\big)\\
&\qquad\qquad+O_\varepsilon\big(|\ell|^{1/2}q^{-3g/2+ \varepsilon g}\big),
\end{align*}
where the function $\eta_1(\ell,u)$ is defined in \eqref{eta} and $P(x)$ is a linear polynomial whose coefficients can be written down explicitly.
\end{theorem}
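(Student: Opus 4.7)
The plan is to follow the strategy developed by Florea [\textbf{\ref{F1}}] for the untwisted first moment, now carrying the character twist $\chi_D(\ell)$ through the whole argument and tracking its dependence carefully. Since $L(s,\chi_D)$ is a polynomial of degree $2g$ in $q^{-s}$ for $D\in\mathcal{H}_{2g+1}$, the functional equation produces an approximate (in fact exact) functional equation expressing $L(1/2,\chi_D)$ as two finite Dirichlet sums of length about $q^g$. Inserting this into the definition of $I_1(\ell;g)$, swapping the order of summation, and using $\chi_D(f)\chi_D(\ell)=\chi_D(f\ell)$, the problem reduces to evaluating
\[
S(n;g):=\sum_{D\in\mathcal{H}_{2g+1}}\chi_D(n)\qquad\text{for } n=f\ell,
\]
uniformly for $f\in\mathcal{M}_{\leq g}$, and then weighting by $|f|^{-1/2}$ and summing back over $f$.

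Next, I would split $S(n;g)$ according to whether $n$ is a perfect square. Writing $\ell=\ell_1\ell_2^2$ with $\ell_1$ squarefree, the condition $f\ell=\square$ reduces to $f=\ell_1 m^2$; the contribution of such $f$, combined with the standard sieve that restricts $D$ to squarefrees, produces the arithmetic factor $\eta_1(\ell;1)/|\ell_1|^{1/2}$, and expanding the resulting generating series near $u=1$ yields the linear-in-$g$ main term of Theorem \ref{tfm}, including the correction $-\partial_u\eta_1/\eta_1(\ell;1)$. For the non-square part, I would apply Poisson summation in $\mathbb{F}_q[x]$, as in [\textbf{\ref{BF}}]: using quadratic reciprocity to turn $\chi_D(n)$ into $\chi_n(D)$ up to an easily tracked sign, one rewrites $S(n;g)$ as a dual sum over monic $v$ of effective length $q^g/|n|^{1/2}$, weighted by the function-field Gauss sums $G(v,n)$. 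Isolating the contribution of $v=w^2$ (the ``dual diagonal''), summing back over $f\in\mathcal{M}_{\leq g}$, and evaluating the resulting expression produces the secondary main term $|\ell_1|^{1/6}q^{-4g/3}P(g+d(\ell_1))$, while the contribution of non-square $v$ is bounded via Weil-type estimates for $G(v,n)$ and yields the error $O_\varepsilon(|\ell|^{1/2}q^{-3g/2+\varepsilon g})$.

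The main obstacle will be extracting the secondary term with the correct $q^{-4g/3}$ exponent and the correct $|\ell_1|^{1/6}$-dependence, rather than just $q^{-g}$ which is what a single Poisson application produces. Overcoming the square-root barrier requires iterating the procedure: the dual-square contribution $v=w^2$ must itself be re-expanded, either by a second Poisson application or by resumming it as a shifted $L$-function average, and it is this second step that generates both the linear polynomial $P$ and the fractional exponent $4/3$. Carrying out this iteration with explicit $\ell$-uniformity—so that the powers $|\ell_1|^{1/6}$ and $|\ell|^{1/2}$ come out correctly and are not absorbed into an inadmissibly large error—will be the most delicate part, since $\ell_1$ enters both through the Gauss sums $G(v,f\ell)$ and through the square-sieving condition $f\ell_1 v=\square$, and the two sources have to be reconciled before the final bound can be read off.
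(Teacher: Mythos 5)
Your high-level skeleton --- approximate functional equation, then a Poisson-type character sum formula (quadratic reciprocity plus the function-field Poisson expansion of Lemmas~\ref{L1}--\ref{L2}), then a decomposition into a diagonal piece ($f\ell=\square$), a ``dual diagonal'' ($V=\square$), and a remainder ($V\neq\square$) --- is indeed the paper's skeleton, which in turn follows Florea's treatment of the untwisted case. The arithmetic factor $\eta_1(\ell;u)$, the Perron/generating-function technology for the diagonal, and the Lindel\"of-type bound for the $V\neq\square$ remainder are all as you describe. So the opening two thirds of your plan are essentially correct.

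There are, however, two genuine gaps in the final stage, and they are related. First, you assert that the $f\ell=\square$ piece alone, ``expanding the resulting generating series near $u=1$,'' yields the main term $\frac{\eta_1(\ell;1)}{|\ell_1|^{1/2}}\big(g-d(\ell_1)+1-\frac{\partial_u\eta_1}{\eta_1}(\ell;1)\big)$ with the stated precision. It does produce that residue at $u=1$, but if you extract it only from the contour integral \eqref{mainfirst} over a small circle $|u|=r<1$, the remaining far-contour integral can be pushed out no further than $|u|=q^{1-\varepsilon}$ (the boundary of analytic continuation of $\eta_1$), leaving an error of size roughly $q^{-(1-\varepsilon)g}$, which is \emph{larger} than the claimed secondary term $q^{-4g/3}$ and far larger than the target $O_\varepsilon(|\ell|^{1/2}q^{-3g/2+\varepsilon g})$. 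The way the paper beats this barrier is that the $V=\square$ (dual-diagonal) contribution is not purely a secondary term: its residue at $w=1$, after the change of variables $u\mapsto 1/u^2$, produces an integral $A(\ell)$ of the \emph{same} integrand as $M_1(\ell)$ but on a contour on the far side of $u=1$, so that $A(\ell)+M_1(\ell)$ is \emph{exactly} the residue at $u=1$ up to $O_\varepsilon(q^{-3g/2+\varepsilon g})$. This cancellation between the diagonal and part of the dual diagonal is the crux of the whole computation, and your plan does not mention it.

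Second, your proposed mechanism for the secondary term --- ``iterating the procedure,'' with a ``second Poisson application'' applied to the dual-square contribution --- does not match the actual argument and, as stated, would lead you astray. There is no iteration. After a single application of Poisson, the dual-square contribution has an explicit double-contour integral representation with integrand $\kappa_1(\ell;u,w)\,\mathcal{Z}(u)\mathcal{Z}(w/q)\mathcal{Z}(uw^2/q)$, and the secondary term is simply the residue at the \emph{other} pole $w=q^2u$ (from the factor $\mathcal{Z}(uw^2/q)$) followed by a residue at the double pole $u=q^{-4/3}$ of the resulting $u$-integral; the exponent $4/3$ is a pole location of a rational generating function, not the output of a second duality step. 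Once you write $\mathcal{N}_1(\ell;u,w)$ as an Euler product and identify $\kappa_1$, the $q^{-4g/3}$ and $|\ell_1|^{1/6}$ fall out of straightforward residue calculus; chasing a second Poisson would introduce complications the proof does not need and would make tracking the $\ell$-dependence harder, not easier. Your intuition that a single ``diagonal plus error'' decomposition only gets to $q^{-g}$ is right, but the remedy is the pole analysis and the $M_1(\ell)+A(\ell)$ cancellation above, not iterated summation.

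As a minor point, the bound on the $V\neq\square$ remainder is obtained from Lindel\"of/moment bounds for the $L$-functions $\mathcal{L}(\cdot,\chi_{V_1})$ appearing after factoring the Dirichlet series over $f$, rather than from Weil-type bounds on individual Gauss sums $G(V,\chi_{f\ell})$; and since $D$ already ranges over $\mathcal{H}_{2g+1}$, no additional squarefree sieve is needed (Lemma~\ref{L5} packages the relevant computation).
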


\begin{theorem}[Twisted second moment]\label{tsm}
Let $\ell=\ell_1\ell_2^2$ with $\ell_1$ square-free. Then we have
\begin{align*}
I_2(\ell;g)=&\,\frac{\eta_2(\ell;1)}{24|\ell_1|^{1/2}}\bigg(\sum_{j=0}^{3}\frac{\partial_u^j\eta_2}{\eta_2}(\ell;1)P_{2,j}\big(2g-d(\ell_1)\big)- 6g\sum_{j=0}^{2}\frac{\partial_u^j\kappa_2}{\kappa_2}(\ell;1,1)Q_{2,j}\big(d(\ell_1)\big)\\
&\qquad\qquad\qquad+2\sum_{i=0}^{1}\sum_{j=0}^{3-i}\frac{\partial_u^j\partial_w^i\kappa_2}{\kappa_2}(\ell;1,1)R_{2,i,j}\big(d(\ell_1)\big)\bigg)+O_\varepsilon\big(|\ell|^{1/2}q^{-g+ \varepsilon g}\big),
\end{align*}
where the functions $\eta_2(\ell,u)$ and $\kappa_2(\ell;u,v)$ are defined in \eqref{eta} and \eqref{kappa2}. Here $P_{2,j}(x)$'s are some explicit polynomials of degrees $3-j$ for all $0\leq j\leq 3$. Also, $Q_{2,j}(x)$'s and $R_{2,i,j}(x)$'s are some explicit polynomials of degrees $2-j$ and $3-i-j$, respectively.

As for the leading term we have
\begin{align*}
I_2(\ell;g)=&\,\frac{\eta_2(\ell;1)}{24|\ell_1|^{1/2}}\Big(8g^3-12g^2d(\ell_1)+d(\ell_1)^3\Big)+O_\varepsilon\big(g^2d(\ell)^\varepsilon\big)+O_\varepsilon\big(|\ell|^{1/2}q^{-g+ \varepsilon g}\big).
\end{align*}
\end{theorem}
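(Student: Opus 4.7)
The plan is to adapt the strategy used by Florea for the untwisted second moment of quadratic Dirichlet $L$-functions over function fields, carrying the twist by $\chi_D(\ell)$ through each step. I will first expand $L(\tfrac12,\chi_D)^2$ using the fact that $L(s,\chi_D)$ is a polynomial in $q^{-s}$ of degree $2g$ together with the functional equation; this writes $L(\tfrac12,\chi_D)^2$ as a Dirichlet sum of the shape $\sum a(f)\chi_D(f)/|f|^{1/2}$ with $a(f)$ essentially the divisor function and the sum truncated at length $\lesssim q^{2g}$. After multiplying by $\chi_D(\ell)$ and averaging over $D\in\mathcal{H}_{2g+1}$, the problem reduces to evaluating character averages
\[
S(h):=\frac{1}{|\mathcal{H}_{2g+1}|}\sum_{D\in\mathcal{H}_{2g+1}}\chi_D(h),\qquad h=f\ell.
\]

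Writing $\ell=\ell_1\ell_2^2$ with $\ell_1$ squarefree, the diagonal contribution comes from those $f$ for which $f\ell$ is a perfect square, i.e.\ $f=\ell_1 m^2$ (coprimality with $\ell_2$ is handled by a standard sieve). Summing over such $f$ produces a one-variable generating series in $u=q^{-s}$; its singular part at $u=1$ factors as a zeta-type pole times the Euler product $\eta_2(\ell;1)$. Extracting coefficients by contour integration against $u^{2g-d(\ell_1)-1}$ and Taylor expanding $\eta_2$ at $u=1$ accounts for the diagonal piece $\sum_{j=0}^{3}(\partial_u^j\eta_2/\eta_2)(\ell;1)P_{2,j}(2g-d(\ell_1))$, with each $P_{2,j}$ an explicit polynomial of degree $3-j$ arising from the residue at the third-order pole.

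For the off-diagonal terms (those with $f\ell$ not a square), I will apply the function-field Poisson summation formula to $S(h)$, which converts the average over $D$ into a dual sum over monic polynomials of length roughly $q^g/|h|$, weighted by Gauss-type sums. Re-inserting the dual sum into the moment and performing the inner $f$-sum produces a two-variable generating series; Taylor expansion at $(u,v)=(1,1)$ yields the arithmetic factor $\kappa_2(\ell;1,1)$ together with its mixed derivatives, paired against explicit polynomials of the predicted degrees. A careful bookkeeping of the contour integrations in $u$ and $v$ accounts precisely for the pieces $-6g\sum_j(\partial_u^j\kappa_2/\kappa_2)Q_{2,j}$ and $2\sum_{i,j}(\partial_u^j\partial_w^i\kappa_2/\kappa_2)R_{2,i,j}$, where the linear factor $6g$ emerges from the residue of one of the poles acquired after Poisson summation.

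The main obstacle will be achieving the error bound $O_\varepsilon(|\ell|^{1/2}q^{-g+\varepsilon g})$: after Poisson summation, the remaining tail sums need to exhibit cancellation beyond the trivial square-root barrier of $q^{-g/2}$, and this saving (analogous to the breakthrough in Florea's treatment of the untwisted second moment) must be made uniform in $\ell$. For the simplified leading-order statement, only the top-degree monomials of $P_{2,0}$, $Q_{2,0}$ and $R_{2,0,0}$ survive -- the remaining derivatives and lower-degree contributions are absorbed into the $O_\varepsilon(g^2 d(\ell)^\varepsilon)$ error -- and after combining $P_{2,0}(2g-d(\ell_1))$ with the leading $\kappa_2$-contributions the polynomial algebra collapses to $8g^3-12g^2 d(\ell_1)+d(\ell_1)^3$, as claimed.
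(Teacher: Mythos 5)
Your outline follows the same route as the paper: approximate functional equation for $L(\tfrac12,\chi_D)^2$, reduction to character sums $\sum_D\chi_D(f\ell)$, the diagonal $f\ell=\square$ producing the $\eta_2$-block, Poisson summation for the rest with the square dual frequencies $V=\square$ producing the $\kappa_2$-block, and the non-square $V$ relegated to the error term. However, as written the off-diagonal evaluation has a genuine gap. After extracting the double pole at $w=1$, one is left with a $u$-integral on a small circle $|u|=q^{-1+\varepsilon}$ whose integrand involves $\kappa_2(\ell;u,1)$, $\partial_w\kappa_2(\ell;u,1)$ and only a bounded power $u^{-(d(\ell_1)+\mathfrak{a}(\ell))/2}$; this is \emph{not} a residue extraction, and ``careful bookkeeping of the contour integrations'' does not close it. The step that makes it work is the functional-equation symmetry of the twisted local factors, $\kappa_2(\ell;u,1)=u^{d(\ell_1)}\kappa_2(\ell;1/u,1)$ together with the companion identity for $\partial_w\kappa_2/\kappa_2$, which acquires an extra $-2d(\ell_1)$; substituting $u\mapsto 1/u$ then shows the integral equals minus its reflection, so the answer is exactly one half of (minus) the residue at $u=1$. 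This is precisely where the polynomials $Q_{2,j}(d(\ell_1))$, $R_{2,i,j}(d(\ell_1))$ and the factor $-6g$ (from the $w$-derivative hitting $w^{-N}$ with $N\approx 2g$) arise, and it is the main place where the twist by $\ell$ changes Florea's untwisted argument; your plan of ``carrying the twist through each step'' needs this identity proved for general $\ell$.

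Two smaller corrections. First, in the diagonal the generating function is $\eta_2(\ell;u)\mathcal{Z}(u^2/q)^{3}$ against the Perron kernel $1/(1-u)$, so the pole at $u=1$ has order four (with an additional order-two pole at $u=-1$ whose residue combines with it by the evenness of $\eta_2$); your ``third-order pole'' would only yield polynomials of degree $2$, inconsistent with the claimed degrees $3-j$ and with the $g^3$ main term. Second, the error bound $O_\varepsilon(|\ell|^{1/2}q^{-g+\varepsilon g})$ does not require cancellation beyond square root: after writing the non-square $V$ contribution as sums of $\mathcal{L}(\cdot,\chi_{V_1})$ over square-free $V_1$, moment bounds (or even a Lindel\"of-type bound on average) give $S_2(\ell;N;V\neq\square)\ll_\varepsilon|\ell|^{1/2}q^{N/2-2g+\varepsilon g}$, which is exactly $q^{-g+\varepsilon g}$ for $N\approx 2g$. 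The beyond-square-root phenomenon you invoke is what is needed for the sharper error and secondary term in the \emph{first} moment, not here; also note the uniformity in $\ell$ enters through bounds of the shape $\mathcal{H}(V,\ell;u,w)\ll_\varepsilon|\ell|^{1/2+\varepsilon}|(\ell,V_2^2)|^{1/2}|V|^\varepsilon$, which is where the $|\ell|^{1/2}$ in the error comes from.
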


\begin{theorem}[Twisted third moment]\label{ttm}
Let $\ell=\ell_1\ell_2^2$ with $\ell_1$ square-free. Then we have
\begin{align*}
I_3(\ell;g)=&\,\frac{\eta_3(\ell;1)}{2^56!|\ell_1|^{1/2}}\sum_{j=0}^{6}\frac{\partial_u^j\eta_3}{\eta_3}(\ell;1)P_{3,j}\big(3g-d(\ell_1)\big)\\
&\qquad+\frac{\kappa_3(\ell;1,1)q^4}{(q-1) |\ell_1|^{1/2}}\sum_{N=3g-1}^{3g}\sum_{i_1=0}^{2}\sum_{i_2=0}^{2-i_1}\sum_{j=0}^{6-i_1-i_2}\frac{\partial_u^j\partial_w^{i_1}\kappa_3}{\kappa_3}(\ell;1,1)R_{3,i_1,i_2,j}(\mathfrak{a},g+d)N^{i_2}\\
&\qquad\qquad+O_\varepsilon(|\ell_1|^{-3/4}q^{-g/4+\varepsilon g})+ O_\varepsilon\big(|\ell|^{1/2}q^{-g/2+\varepsilon g}\big),
\end{align*}
where the functions $\eta_3(\ell,u)$ and $\kappa_3(\ell;u,v)$ are defined in \eqref{eta} and \eqref{kappa3}. Here $P_{3,j}(x)$'s are some explicit polynomials of degrees $6-j$ for all $0\leq j\leq 6$. Also, $\mathfrak{a} \in \{0,1\}$ according to whether $N-d(\ell)$ is even or odd, and $R_{3,i_1,i_2,j}(\mathfrak{a},x)$ are some explicit polynomials in $x$ with degree $6-i_1-i_2-j$.

As for the leading term we have
\begin{align*}
I_3(\ell;g)=\frac{\eta_3(\ell;1)}{2^56! |\ell_1|^{\frac{1}{2}}}&\Big(\big(3g-d(\ell_1)\big)^6-73\big(g+d(\ell_1)\big)^6+396g\big(g+d(\ell_1)\big)^5\\
&\qquad\qquad-540g^2\big(g+d(\ell_1)\big)^4\Big)+O_\varepsilon\big(g^5d(\ell)^\varepsilon\big)+ O_\varepsilon\big(|\ell|^{1/2}q^{-g/4+\varepsilon g}\big).
\end{align*}
\end{theorem}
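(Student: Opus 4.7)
The plan is to adapt Florea's proof of the untwisted third moment, carrying the twist $\chi_D(\ell)$ through every step and retaining all contributions down to size $q^{-g/4+\varepsilon g}$. First I would open the cube using the function-field approximate functional equation: since $\mathcal{L}(u,\chi_D)=\sum_{f\in\mathcal{M}}\chi_D(f)u^{d(f)}$ is a polynomial of degree $2g$, contour integration combined with the functional equation expresses
\[
L(\tfrac12,\chi_D)^3=\sum_{d(f)\leq 3g}\frac{\tau_3(f)\chi_D(f)}{|f|^{1/2}}+(\text{dual piece of the same shape}).
\]
Substituting into $I_3(\ell;g)$ and swapping summation reduces the problem to evaluating $\sum_{f}\tau_3(f)|f|^{-1/2}\langle\chi_D(f\ell)\rangle_{\mathcal{H}_{2g+1}}$ together with its dual analogue.

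Next I would evaluate the inner character sum $\sum_{D\in\mathcal{H}_{2g+1}}\chi_D(f\ell)$ using Rudnick's Poisson summation formula for this family. This splits the sum into a diagonal part, supported on $f\ell$ being (essentially) a perfect square and contributing, after Euler-product assembly, the arithmetic factor $\eta_3(\ell;1)$, plus a dual sum of the shape
\[
\frac{q^{(2g+1)/2}}{|f\ell|^{1/2}}\sum_{V\in\mathcal{M}_{\leq d(f\ell)}}G(V,f\ell),
\]
where $G(V,f\ell)$ is a Gauss-type sum whose size depends on the parity of $d(V)-d(f\ell)$; this parity becomes the indicator $\mathfrak{a}$, and the two critical degrees $N=d(V)\in\{3g-1,3g\}$ in the statement correspond to the transition range of this dual sum. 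Recombining it with the dual piece from the first step yields the factor $\kappa_3(\ell;1,1)$.

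To extract the explicit polynomials $P_{3,j}$ and $R_{3,i_1,i_2,j}(\mathfrak{a},\cdot)$ I would replace each finite sum by a contour integral of the appropriate two-variable Dirichlet series in $u$ (attached to the $f$-sum) and $w$ (attached to the $V$-sum), and push the contours past $u=1$ and $w=1$. Since the integrand factors as a product of zeta-type polar factors with the holomorphic $\eta_3(\ell,u)$ and $\kappa_3(\ell;u,w)$, it has poles of total order up to $7$, whose residues are polynomials in $3g-d(\ell_1)$ and in $g+d$ of the stated degrees, with coefficients assembled from the partial derivatives $\partial_u^j\partial_w^{i_1}\eta_3(\ell;1)$ and $\partial_u^j\partial_w^{i_1}\kappa_3(\ell;1,1)$.

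The hardest step is controlling the secondary main term of order $|\ell_1|^{-1/2}q^{-g/4}$ — the $\kappa_3$-contribution — which, unlike in the first and second moments, exceeds a naive square-root saving in $g$ and therefore must be extracted rather than discarded. The delicate evaluation of the dual sum at its transition range $d(V)\approx d(f\ell)$, distinguishing the two parities of $N-d(\ell)$ and keeping uniform control in $\ell$, is the technical heart of the argument. The residual errors $O_\varepsilon(|\ell_1|^{-3/4}q^{-g/4+\varepsilon g})$ (next-order dual contribution) and $O_\varepsilon(|\ell|^{1/2}q^{-g/2+\varepsilon g})$ (off-diagonal remainder) then follow from Weil-type bounds for character sums over $\mathbb{F}_q[x]$ combined with mean-value estimates for the associated Dirichlet $L$-functions on the critical line.
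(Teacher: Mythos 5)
Your overall skeleton --- approximate functional equation, Poisson summation for $\sum_{D}\chi_D(f\ell)$, a diagonal piece ($f\ell=\square$) assembling into $\eta_3$, a square-$V$ dual piece assembling into $\kappa_3$, two-variable contour integrals with residues at $w=1$ and $u=1$, and moment/Lindel\"of bounds for the nonsquare $V$ --- is essentially the paper's route (Florea's third-moment argument with the twist carried through). However, two concrete points in your sketch are wrong, and one of them would derail the result. First, $N\in\{3g-1,3g\}$ is not the degree of $V$ at a ``transition range'' of the dual sum: it is the truncation length of the $f$-sum in the two pieces of the approximate functional equation, and $\mathfrak{a}$ records the parity of $N-d(\ell)$, i.e.\ the parity constraint on $d(f)$, which enters through the Perron-type formula for the $f$-sum. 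After Poisson summation the $V$-variable only runs up to roughly $d(f\ell)-2g\approx g$, nowhere near $3g$, so your identification $N=d(V)$ cannot be repaired by bookkeeping.

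Second, and more seriously, you describe the $\kappa_3$-contribution as a ``secondary main term of order $|\ell_1|^{-1/2}q^{-g/4}$''. It is not exponentially small: the $V=\square$ term is of genuine main-term size, comparable to $g^6/|\ell_1|^{1/2}$, and it is precisely what produces the $-73\big(g+d(\ell_1)\big)^6+396g\big(g+d(\ell_1)\big)^5-540g^2\big(g+d(\ell_1)\big)^4$ part of the leading asymptotic, on equal footing with $\big(3g-d(\ell_1)\big)^6$ from the diagonal. (An exponentially small secondary term of the kind you have in mind, of size $q^{-4g/3}$, occurs only in the first moment.) If you treat the $V=\square$ piece as a small correction to be controlled at the $q^{-g/4}$ level rather than evaluated via the triple pole at $w=1$ followed by the pole at $u=1$, you would report the wrong leading constant. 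The stated error terms also arise differently from what you say: $O_\varepsilon(|\ell_1|^{-3/4}q^{-g/4+\varepsilon g})$ comes from bounding the integral on the shifted $u$-contour in the $\kappa_3$ analysis, while $O_\varepsilon(|\ell|^{1/2}q^{-g/2+\varepsilon g})$ comes from the nonsquare-$V$ terms, bounded by writing the $V_1$-sum in terms of $\mathcal{L}$-functions and invoking moment (or Lindel\"of-type) bounds, which does match the last step of your plan.
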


\section{Background in function fields}

We first give some background information on $L$-functions over function fields and their connection to zeta functions of curves.

Let $\pi_q(n)$ denote the number of monic, irreducible polynomials of degree $n$ over $\mathbb{F}_q[x]$. The following Prime Polynomial Theorem holds
\begin{equation*}
\pi_q(n) = \frac{1}{n} \sum_{d|n} \mu(d) q^{n/d}.
\label{pnt}
\end{equation*}
We can rewrite the Prime Polynomial Theorem in the form 
\begin{equation*}
\sum_{f \in \mathcal{M}_n} \Lambda(f) = q^n.
\end{equation*}

\subsection{Quadratic Dirichlet $L$-functions over function fields}

For $\textrm{Re}(s)>1$, the zeta function of $\mathbb{F}_q[x]$ is defined by
\[
\zeta_q(s):=\sum_{f\in\mathcal{M}}\frac{1}{|f|^s}=\prod_{P\in \mathcal{P}}\bigg(1-\frac{1}{|P|^s}\bigg)^{-1}.
\]
Since there are $q^n$ monic polynomials of degree $n$, we see that
\[
\zeta_q(s)=\frac{1}{1-q^{1-s}}.
\]
It is sometimes convenient to make the change of variable $u=q^{-s}$, and then write $\mathcal{Z}(u)=\zeta_q(s)$, so that $$\mathcal{Z}(u)=\frac{1}{1-qu}.$$

For $P$ a monic irreducible polynomial, the quadratic residue symbol $\big(\frac{f}{P}\big)\in\{0,\pm1\}$ is defined by
\[
\Big(\frac{f}{P}\Big)\equiv f^{(|P|-1)/2}(\textrm{mod}\ P).
\]
If $Q=P_{1}^{\alpha_1}P_{2}^{\alpha_2}\ldots P_{r}^{\alpha_r}$, then the Jacobi symbol is defined by
\[
\Big(\frac{f}{Q}\Big)=\prod_{j=1}^{r}\Big(\frac{f}{P_j}\Big)^{\alpha_j}.
\]
The Jacobi symbol satisfies the quadratic reciprocity law. That is to say if $A,B\in \mathbb{F}_q[x]$ are relatively prime, monic polynomials, then
\[
\Big(\frac{A}{B}\Big)=(-1)^{(q-1)d(A)d(B)/2}\Big(\frac{B}{A}\Big).
\]
As we are assuming $q\equiv 1(\textrm{mod}\ 4)$, the quadratic reciprocity law gives $\big(\frac{A}{B}\big)=\big(\frac{B}{A}\big)$, a fact we will use throughout the paper.

For $D$ monic, we define the character 
\[
\chi_D(g)=\Big(\frac{D}{g}\Big),
\]
and consider the $L$-function attached to $\chi_D$,
\[
L(s,\chi_D):=\sum_{f\in\mathcal{M}}\frac{\chi_D(f)}{|f|^s}.
\]
With the change of variable $u=q^{-s}$ we have
\begin{equation*}
\mathcal{L}(u,\chi_D):=L(s,\chi_D)=\sum_{f\in\mathcal{M}}\chi_D(f)u^{d(f)}=\prod_{P\in \mathcal{P}}\big(1-\chi_D(P)u^{d(P)}\big)^{-1}.
\end{equation*}
For $D\in\mathcal{H}_{2g+1}$, $\mathcal{L}(u,\chi_D)$ is a polynomial in $u$ of degree $2g$ and it satisfies a functional equation
\begin{equation*}
\mathcal{L}(u,\chi_D)=(qu^2)^g\mathcal{L}\Big(\frac{1}{qu},\chi_D\Big).
\end{equation*}

There is a connection between $\mathcal{L}(u,\chi_D)$ and zeta function of curves. For $D\in\mathcal{H}_{2g+1}$, the affine equation $y^2=D(x)$ defines a projective and connected hyperelliptic curve $C_D$ of genus $g$ over $\mathbb{F}_q$. The zeta function of the curve $C_D$ is defined by
\[
Z_{C_D}(u)=\exp\bigg(\sum_{j=1}^{\infty}N_j(C_D)\frac{u^j}{j}\bigg),
\]
where $N_j(C_D)$ is the number of points on $C_D$ over $\mathbb{F}_q$, including the point at infinity. Weil [\textbf{\ref{W}}] showed that
\[
Z_{C_D}(u)=\frac{P_{C_D}(u)}{(1-u)(1-qu)},
\]
where $P_{C_D}(u)$ is a polynomial of degree $2g$. It is known that $P_{C_D}(u)=\mathcal{L}(u,\chi_D)$ (this was proved in Artin's thesis). The Riemann Hypothesis for curves over function fields was proven by Weil [\textbf{\ref{W}}], so all the zeros of $\mathcal{L}(u,\chi_D)$ are on the circle $|u|=q^{-1/2}$. 

\subsection{Preliminary lemmas}

The first three lemmas are in [\textbf{\ref{F1}}; Lemma 2.2, Proposition 3.1 and Lemma 3.2].

\begin{lemma}\label{L1}
For $f\in\mathcal{M}$ we have
\[
\sum_{D\in\mathcal{H}_{2g+1}}\chi_D(f)=\sum_{C|f^\infty}\sum_{h\in\mathcal{M}_{2g+1-2d(C)}}\chi_f(h)-q\sum_{C|f^\infty}\sum_{h\in\mathcal{M}_{2g-1-2d(C)}}\chi_f(h),
\]
where the summations over $C$ are over monic polynomials $C$ whose prime factors are among the prime factors of $f$.
\end{lemma}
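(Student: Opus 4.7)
The plan is to prove the identity by working with the generating series over all degrees simultaneously. First I would note that since $q\equiv 1\pmod{4}$, quadratic reciprocity gives $\chi_D(f)=\chi_f(D)$ whenever $\gcd(D,f)=1$, and both Jacobi symbols vanish when $\gcd(D,f)>1$, so the equality $\chi_D(f)=\chi_f(D)$ holds for every monic $D$ and $f$. Hence the left-hand side equals $\sum_{D\in\mathcal{H}_{2g+1}}\chi_f(D)$, which is the coefficient of $u^{2g+1}$ in the generating series
\[
F(u):=\sum_{\substack{D\in\mathcal{M}\\ D\text{ square-free}}}\chi_f(D)\,u^{d(D)}=\prod_{P}\bigl(1+\chi_f(P)u^{d(P)}\bigr),
\]
by complete multiplicativity of $\chi_f$ and the Euler-product form of the indicator of square-freeness.

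Next I would manipulate this Euler product. Since $\chi_f(P)=0$ for $P\mid f$, the product is really over $P\nmid f$, and for such primes $\chi_f(P)^{2}=1$, so applying $(1+x)=(1-x^{2})/(1-x)$ with $x=\chi_f(P)u^{d(P)}$ yields
\[
F(u)=\mathcal{L}(u,\chi_f)\prod_{P\nmid f}\bigl(1-u^{2d(P)}\bigr).
\]
To evaluate the residual product I would use the formal identity $\prod_{P}(1-u^{2d(P)})^{-1}=\mathcal{Z}(u^{2})=(1-qu^{2})^{-1}$, which follows from the closed form of the $\mathbb{F}_q[x]$-zeta function together with unique factorization, and the geometric series expansion $\prod_{P\mid f}(1-u^{2d(P)})^{-1}=\sum_{C\mid f^{\infty}}u^{2d(C)}$. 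Combining these,
\[
F(u)=(1-qu^{2})\,\mathcal{L}(u,\chi_f)\sum_{C\mid f^{\infty}}u^{2d(C)}.
\]

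Finally I would extract the coefficient of $u^{2g+1}$ on both sides. Using $[u^{m}]\mathcal{L}(u,\chi_f)=\sum_{h\in\mathcal{M}_{m}}\chi_f(h)$, the factor $(1-qu^{2})$ produces exactly the difference of the two $h$-sums with degree parameters $2g+1-2d(C)$ and $2g-1-2d(C)$, giving the claimed formula. There is no serious obstacle in this argument; the only care needed is in justifying that $\chi_f(P)=0$ for $P\mid f$ so that the Euler product over $P\nmid f$ can be identified with $\mathcal{L}(u,\chi_f)$, and in tracking the degree shift by $2$ from the $(1-qu^{2})$ factor, which is precisely the origin of the $-q$ term in the lemma.
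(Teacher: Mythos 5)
Your argument is correct, and it is essentially the proof the paper relies on (the lemma is quoted from [F1, Lemma 2.2], whose proof detects square-freeness of $D$ via $\sum_{A^2\mid D}\mu(A)$ and reciprocity, which amounts to exactly your identity $F(u)=(1-qu^2)\,\mathcal{L}(u,\chi_f)\sum_{C\mid f^\infty}u^{2d(C)}$ followed by extracting the coefficient of $u^{2g+1}$). Writing the square-free generating series as $\prod_P\bigl(1+\chi_f(P)u^{d(P)}\bigr)$ and factoring it is just a repackaging of that Mobius computation, so there is nothing to add.
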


We define the generalized Gauss sum as
\[
G(V,\chi):= \sum_{u (\textrm{mod}\ f)}\chi(u)e\Big(\frac{uV}{f}\Big),
\]
where the exponential was defined in [\textbf{\ref{Hayes}}] as follows. For $a \in  \mathbb{F}_q\big((\frac 1x)\big) $, 
$$ e(a) = e^{ 2 \pi i \text{Tr}_{\mathbb{F}_q / \mathbb{F}_p} (a_1)/p},$$ where $a_1$ is the coefficient of $1/x$ in the Laurent expansion of $a$. 

\begin{lemma}\label{L3}
Let $f\in\mathcal{M}_n$. If $n$ is even then
\[
\sum_{h\in\mathcal{M}_m}\chi_f(h)=\frac{q^m}{|f|}\bigg(G(0,\chi_f)+q\sum_{V\in\mathcal{M}_{\leq n-m-2}}G(V,\chi_f)-\sum_{V\in\mathcal{M}_{\leq n-m-1}}G(V,\chi_f)\bigg),
\]
otherwise
\[
\sum_{h\in\mathcal{M}_m}\chi_f(h)= \frac{q^{m+1/2}} {|f|}\sum_{V\in\mathcal{M}_{n-m-1}}G(V,\chi_f).
\]
\end{lemma}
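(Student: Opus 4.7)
The plan is to prove both formulas simultaneously from a single orthogonality-plus-$S(V)$-evaluation scheme, with the parity split emerging from the restriction of $\chi_f$ to the constant field $\mathbb{F}_q^\times$. I would begin by using that $\chi_f(h)$ depends only on $h$ modulo $f$ and detecting the residue via the additive orthogonality relation $\delta(h\equiv u\,(\mathrm{mod}\,f))=|f|^{-1}\sum_V e((h-u)V/f)$. After interchanging sums and applying $\chi_f(-1)=1$ (which holds because $q\equiv1\,(\mathrm{mod}\,4)$), this gives
\[
\sum_{h\in\mathcal{M}_m}\chi_f(h)=\frac{1}{|f|}\sum_{V\,(\mathrm{mod}\,f)}G(V,\chi_f)\,S(V),\qquad S(V):=\sum_{h\in\mathcal{M}_m}e(hV/f).
\]

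The core computation is the evaluation of $S(V)$. Writing $h=x^m+a_{m-1}x^{m-1}+\cdots+a_0$ and expanding $1/f$ as a Laurent series in $1/x$, the coefficient of $1/x$ in $hV/f$ is an affine-linear form in $(a_0,\ldots,a_{m-1})$ whose slope is controlled by $\deg V$. A direct bookkeeping yields: $S(V)=q^m$ for $\deg V\le n-m-2$ (or $V=0$); $S(V)=q^m\psi(\mathrm{LC}(V))$ for $\deg V=n-m-1$, where $\psi$ denotes the standard additive character of $\mathbb{F}_q$; and $S(V)=0$ for $\deg V\ge n-m$, since then the linear form has a nonzero coefficient in some $a_i$ and additive orthogonality kills the sum.

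To assemble, I would split each nonzero $V$ as $V=cV'$ with $c\in\mathbb{F}_q^\times$ and $V'$ monic of the same degree; a substitution $u\mapsto c^{-1}u$ in $G(cV',\chi_f)$ gives $G(cV',\chi_f)=\chi_f(c)G(V',\chi_f)$. The crucial input is the restriction $\chi_f|_{\mathbb{F}_q^\times}$: from $\chi_f(c)=c^{(|f|-1)/2}=\bigl(c^{(q-1)/2}\bigr)^{(q^n-1)/(q-1)}$ together with $(q^n-1)/(q-1)=1+q+\cdots+q^{n-1}\equiv n\pmod{2}$, this restriction is trivial when $n$ is even and is the quadratic Legendre character of $\mathbb{F}_q$ when $n$ is odd. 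For $n$ even, $\sum_{c\in\mathbb{F}_q^\times}\chi_f(c)=q-1$ and $\sum_c\chi_f(c)\psi(c)=-1$; collecting the three degree bands and rewriting $(q-1)\sum_{V\in\mathcal{M}_{\le n-m-2}}G(V,\chi_f)-\sum_{V\in\mathcal{M}_{n-m-1}}G(V,\chi_f)$ as $q\sum_{V\in\mathcal{M}_{\le n-m-2}}G(V,\chi_f)-\sum_{V\in\mathcal{M}_{\le n-m-1}}G(V,\chi_f)$ yields the first stated formula. For $n$ odd, $\sum_{c\in\mathbb{F}_q^\times}\chi_f(c)=0$ annihilates the entire $\deg V\le n-m-2$ band, while $\sum_c\chi_f(c)\psi(c)$ is the classical quadratic Gauss sum over $\mathbb{F}_q$, equal to $\sqrt{q}$ under $q\equiv1\,(\mathrm{mod}\,4)$; assuming $f$ squarefree so that $G(0,\chi_f)=0$, this produces the compact second formula with the $q^{1/2}$ factor identified as exactly this quadratic Gauss sum.

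The main potential pitfall is the analysis of $\chi_f|_{\mathbb{F}_q^\times}$: one is tempted to assume, as for most Dirichlet characters in the number-field setting, that $\chi_f$ is trivial on the constant field under $q\equiv1\,(\mathrm{mod}\,4)$; this holds only for $n$ even, and recognising the quadratic-Legendre nature of $\chi_f|_{\mathbb{F}_q^\times}$ when $n$ is odd is what drives the dramatic collapse producing the simpler odd-$n$ formula. All other steps---the orthogonality reduction, the Laurent-series computation of $S(V)$, and the assembly across the three degree bands---are essentially routine.
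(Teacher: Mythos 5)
Your argument is correct in substance, and it is essentially \emph{the} argument for this statement: the paper does not prove Lemma \ref{L3} but quotes it from [\textbf{\ref{F1}}; Proposition 3.1], where it is obtained by exactly this Poisson-summation scheme --- orthogonality of the additive characters $u\mapsto e(uV/f)$ of $\mathbb{F}_q[x]/(f)$, the band-by-band evaluation of $\sum_{h\in\mathcal{M}_m}e(hV/f)$ according to $d(V)\leq n-m-2$, $d(V)=n-m-1$, $d(V)\geq n-m$, and the even/odd dichotomy coming from the restriction of $\chi_f$ to $\mathbb{F}_q^{\times}$, which you correctly identify (via $\left(\frac{c}{f}\right)=\chi_2(c)^{\,n\bmod 2}$ with $\chi_2$ the quadratic character of $\mathbb{F}_q^\times$) once $\chi_f$ is extended to non-monic residues as $\left(\frac{u}{f}\right)$, the extension that makes $G(V,\chi_f)$ a genuine character sum modulo $f$.

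Two points need attention. First, your parenthetical ``assuming $f$ squarefree so that $G(0,\chi_f)=0$'' imports a hypothesis that is neither in the statement nor available in its applications (the lemma is applied to $f\ell$, which is generally not squarefree). The hypothesis is unnecessary: $G(0,\chi_f)=\sum_{u\,(\mathrm{mod}\ f)}\left(\frac{u}{f}\right)$ vanishes as soon as the character $u\mapsto\left(\frac{u}{f}\right)$ is nontrivial modulo $f$, i.e.\ as soon as some prime divides $f$ to an odd power, and this is automatic when $n=d(f)$ is odd; so the squarefree assumption should simply be deleted and replaced by this one-line remark. Second, the constant you produce in the odd case is the quadratic Gauss sum $\sum_{c\in\mathbb{F}_q^\times}\chi_2(c)\psi(c)$ of the constant field; it equals $+\sqrt q$ when $q$ is a prime $\equiv1\ (\mathrm{mod}\ 4)$, but for prime powers the Davenport--Hasse relation can flip the sign (e.g.\ $q=p^2$ with $p\equiv1\ (\mathrm{mod}\ 4)$ gives $-\sqrt q$). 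Since the statement itself, as well as the companion evaluation of $G(V,\chi_{P^j})$ in Lemma \ref{L2}, implicitly uses the $+\sqrt q$ normalization, this is a caveat you share with the quoted source rather than a defect of your argument relative to it, but it is worth either justifying or flagging the restriction on $q$ at that step.
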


\begin{lemma}\label{L2}

\begin{enumerate}
\item If $(f,h)=1$, then $G(V, \chi_{fh})= G(V, \chi_f) G(V,\chi_h)$.
\item Write $V= V_1 P^{\alpha}$ where $P \nmid V_1$.
Then 
 $$G(V , \chi_{P^j})= 
\begin{cases}
0 & \mbox{if } j \leq \alpha \text{ and } j \text{ odd,} \\
\varphi(P^j) & \mbox{if }  j \leq \alpha \text{ and } j \text{ even,} \\
-|P|^{j-1} & \mbox{if }  j= \alpha+1 \text{ and } j \text{ even,} \\
\chi_P(V_1) |P|^{j-1/2} & \mbox{if } j = \alpha+1 \text{ and } j \text{ odd, } \\
0 & \mbox{if } j \geq 2+ \alpha .
\end{cases}$$ 
\end{enumerate}
\end{lemma}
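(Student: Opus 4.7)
The plan is to establish part (1) via the Chinese Remainder Theorem and to attack part (2) by a case analysis on the $P$-adic valuation of $V$.

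For part (1), I would parametrize residues modulo $fh$ by writing $u \equiv u_1 h \overline{h}_f + u_2 f \overline{f}_h \pmod{fh}$, where $u_1$ runs over residues mod $f$, $u_2$ over residues mod $h$, and $\overline{h}_f,\overline{f}_h$ are the respective multiplicative inverses guaranteed by $(f,h)=1$. The Jacobi symbol factors as $\chi_{fh}(u) = \chi_f(u_1)\chi_h(u_2)$, and the exponential argument splits as
$$\frac{uV}{fh} \;=\; \frac{u_1 V \overline{h}_f}{f} + \frac{u_2 V \overline{f}_h}{h} + (\text{polynomial}),$$
so $e(uV/(fh))$ factors correspondingly (since $e$ is trivial on polynomials). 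A bijective substitution $u_1 \mapsto u_1 h \pmod{f}$ and $u_2 \mapsto u_2 f \pmod{h}$ clears the inverses and produces a factor $\chi_f(h)\chi_h(f)$, which equals $1$ by quadratic reciprocity under the standing assumption $q\equiv 1\pmod{4}$.

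For part (2), set $s = j - \alpha$, so that $uV/P^j = uV_1/P^s$ with $(V_1,P)=1$. If $s \leq 0$, the argument is a polynomial and $e$ is trivial, so the sum collapses to $\sum_{u \bmod P^j} \chi_{P^j}(u)$: for $j$ even, $\chi_{P^j}$ is just the indicator that $(u,P)=1$, giving $\varphi(P^j)$; for $j$ odd, $\chi_{P^j}$ restricts to the nontrivial character $\chi_P$ on each coset mod $P$ and the sum vanishes. If $s = 1$, I would decompose $u = u_0 + P u_1$ with $u_0$ running over residues mod $P$ and $u_1$ over residues mod $P^{j-1}$; the exponential then depends only on $u_0$, as does the character. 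The $j$ even case yields $|P|^{j-1}\sum_{u_0 \not\equiv 0\,(P)} e(u_0 V_1/P) = -|P|^{j-1}$ by orthogonality on $\mathbb{F}_q[x]/P$. The $j$ odd case yields $|P|^{j-1}\chi_P(V_1)\tau(\chi_P)$ after a bijective change of variable $u_0 \mapsto V_1^{-1} u_0$, where $\tau(\chi_P) := \sum_{u \bmod P}\chi_P(u) e(u/P)$. Finally, if $s \geq 2$, I would split $u = v + P^{s-1}w$ with $v \bmod P^{s-1}$ and $w \bmod P^{j-s+1}$; both the character and the ``bulk'' exponential depend only on $v$, while $w$ contributes only through $e(w V_1/P)$, and the inner sum $\sum_{w_0 \bmod P} e(w_0 V_1/P) = 0$ by orthogonality since $P \nmid V_1$.

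The main obstacle is the odd case of $s=1$, which reduces to establishing the classical quadratic Gauss sum evaluation $\tau(\chi_P) = |P|^{1/2}$. Proving this cleanly requires unpacking Hayes's definition of $e(\cdot)$ in terms of the trace $\mathrm{Tr}_{\mathbb{F}_q/\mathbb{F}_p}$ acting on the $1/x$-coefficient of the Laurent expansion, computing $|\tau(\chi_P)|^2 = |P|$ by the standard double-sum trick, and then using $q \equiv 1\pmod{4}$ to determine the sign and eliminate a potential root of unity; the same congruence hypothesis is what makes the reciprocity factor in part (1) trivial. Everything else in the proof is bookkeeping with CRT, partial residue-class decompositions, and vanishing of complete additive and multiplicative character sums.
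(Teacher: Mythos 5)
Your outline follows what is in fact the standard argument: the paper itself does not prove Lemma \ref{L2} but imports it (with Lemmas \ref{L1} and \ref{L3}) from [\textbf{\ref{F1}}], and the proof there proceeds exactly as you propose --- CRT plus the trivialized reciprocity factor $\chi_f(h)\chi_h(f)=1$ for part (1), and for part (2) the decomposition of $u$ into residue classes modulo powers of $P$, with orthogonality of the additive and multiplicative characters handling every case except $j=\alpha+1$ with $j$ odd, which reduces to $\tau(\chi_P)=\sum_{u \bmod P}\chi_P(u)e(u/P)$. All of that bookkeeping in your proposal is correct.

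The genuine gap is at the point you yourself call the main obstacle, and your plan for it does not close it. The double-sum trick only gives $|\tau(\chi_P)|^2=|P|$; the lemma needs the exact value $\tau(\chi_P)=+|P|^{1/2}$, and determining the argument of a Gauss sum is the classical hard step --- the congruence $q\equiv1\pmod 4$ does not by itself ``eliminate a potential root of unity.'' Concretely, expanding $u/P$ in partial fractions shows that the $1/x$-coefficient equals $\mathrm{Tr}_{\mathbb{F}_{q^d}/\mathbb{F}_q}\big(u(\theta)/P'(\theta)\big)$ with $\theta$ a root of $P$ and $d=d(P)$, so $\tau(\chi_P)=\eta\big(P'(\theta)\big)\,g_{\mathbb{F}_{q^d}}$, where $\eta$ is the quadratic character of the residue field and $g_{\mathbb{F}_{q^d}}$ its canonical quadratic Gauss sum. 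Evaluating this requires three classical inputs absent from your sketch: Gauss's determination of the sign of the prime-field Gauss sum, the Davenport--Hasse relation $g_{\mathbb{F}_{q^d}}=(-1)^{d-1}g_{\mathbb{F}_q}^{\,d}$, and the Pellet--Stickelberger theorem giving $\eta\big(P'(\theta)\big)=\chi_q\big(\mathrm{disc}\,P\big)=(-1)^{d-1}$ (here $\chi_q(-1)=1$ is where $q\equiv1\pmod4$ enters). The net result is $\tau(\chi_P)=g_{\mathbb{F}_q}^{\,d}$, which shows how delicate the positivity is: it is immediate for $q$ prime with $q\equiv1\pmod4$, but for a proper prime power the sign of $g_{\mathbb{F}_q}$ can be negative (e.g.\ $g_{\mathbb{F}_{25}}=-5$), so the congruence alone cannot carry the weight you assign to it. You must either reproduce this chain of classical results, with whatever hypotheses on $q$ it forces, or do what the paper does and cite the evaluation; as written, the one step that contains all the content of the lemma is asserted rather than proved.
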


\begin{lemma}\label{L5}
For $\ell\in\mathcal{M}$ a square polynomial we have
\[
\frac{1}{| \mathcal{H}_{2g+1}|}\sum_{D \in \mathcal{H}_{2g+1}} \chi_D(\ell)=\prod_{\substack{P\in\mathcal{P}\\P|\ell}}\bigg(1+\frac{1}{|P|}\bigg)^{-1}+O(q^{-2g}).
\]
\end{lemma}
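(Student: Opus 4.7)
The plan is to reduce the sum to a count of square-free monic polynomials coprime to the radical of $\ell$, and then extract the main term via a single generating function in the variable $u$.

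First I would exploit that $\ell$ is a perfect square. Writing $\ell = h^2$, the Jacobi symbol gives $\chi_D(\ell) = \bigl(\tfrac{D}{h}\bigr)^2$, which is $1$ when $(D,h)=1$ and $0$ otherwise. Setting $\ell_0 := \prod_{P \mid \ell} P$ for the radical of $\ell$, this turns the sum on the left-hand side into
\[
\sum_{D \in \mathcal{H}_{2g+1}} \chi_D(\ell) \;=\; \#\{D \in \mathcal{H}_{2g+1} : (D,\ell_0)=1\}.
\]
This count is naturally encoded in the Euler product
\[
F(u) := \sum_{\substack{f \in \mathcal{M},\ \text{sq-free}\\ (f,\ell_0)=1}} u^{d(f)} \;=\; \prod_{P \nmid \ell_0}\bigl(1+u^{d(P)}\bigr) \;=\; \frac{\mathcal{Z}(u)}{\mathcal{Z}(u^2)} \prod_{P \mid \ell_0}\frac{1}{1+u^{d(P)}} \;=\; \frac{1-qu^2}{1-qu} \prod_{P \mid \ell_0}\frac{1}{1+u^{d(P)}},
\]
using the standard factorization $\prod_{P}(1+u^{d(P)}) = \mathcal{Z}(u)/\mathcal{Z}(u^2)$. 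The quantity we need is therefore $[u^{2g+1}]F(u)$.

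Next I would extract this coefficient by a partial-fraction decomposition. The function $F$ is rational, with a simple pole at $u=1/q$ and all other poles on $|u|=1$ (the zeros of $1+u^{d(P)}$ for $P \mid \ell_0$). Writing
\[
F(u) \;=\; \frac{A}{1-qu} + G(u), \qquad A \;=\; \lim_{u \to 1/q}(1-qu)F(u) \;=\; \Bigl(1-\tfrac{1}{q}\Bigr)\prod_{P \mid \ell_0}\frac{1}{1+|P|^{-1}},
\]
the residual $G(u)$ is a rational function whose poles all lie on $|u|=1$, so its coefficients grow at most polynomially in the index. Therefore $[u^{2g+1}]F(u) = A\,q^{2g+1} + O_{\ell}(g^{C})$. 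Dividing by $|\mathcal{H}_{2g+1}| = q^{2g+1}(1-q^{-1})$ yields the claimed main term $\prod_{P \mid \ell}(1+|P|^{-1})^{-1}$ plus an error of size $O_{\ell}(g^{C}/q^{2g+1}) = O(q^{-2g})$.

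The mildly delicate point is the bound on $[u^{2g+1}]G(u)$: the exponent $C$ depends on the maximum multiplicity of coincident poles among the factors $1+u^{d(P)}$, so in principle it grows with the number of distinct prime divisors of $\ell$. Any such polynomial growth in $g$ is nevertheless negligible against $q^{2g+1}$, so in the parameter range used later in the paper the polynomial factor is swallowed by the implicit constant.
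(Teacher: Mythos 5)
Your reduction and your main term are fine: since $\ell$ is a square and $D$ is square-free, $\chi_D(\ell)$ is exactly the indicator of $(D,\ell)=1$, the generating function $F(u)=\frac{1-qu^2}{1-qu}\prod_{P\mid\ell}\bigl(1+u^{d(P)}\bigr)^{-1}$ is correct, and the contribution of the pole at $u=1/q$, divided by $|\mathcal{H}_{2g+1}|=q^{2g+1}(1-q^{-1})$, gives precisely $\prod_{P\mid\ell}(1+|P|^{-1})^{-1}$. Note that the paper does not prove this lemma at all (it cites [BF, Lemma 3.7]), so the only question is whether your argument is complete; your route is the natural direct one.

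The genuine gap is the error term, i.e.\ exactly the step you wave at in the last paragraph. First, $O_{\ell}\bigl(g^{C}/q^{2g+1}\bigr)$ is \emph{not} $O(q^{-2g})$ once $C\geq 1$: an implied constant cannot absorb a factor that grows with $g$, so your argument as written does not yield the stated bound. Second, both $C$ and the implied constant depend on $\ell$ (the pole multiplicities can be as large as $\omega(\ell)$, and the partial-fraction coefficients depend on $\ell$ in an uncontrolled way), and this dependence is precisely what matters: in Section 5 the lemma is applied to every square $\ell^2$ with $\ell\in S(X)$ and $d(\ell)\leq\vartheta g/2$, where $\omega(\ell)$ itself grows with $g$, so an unquantified $O_{\ell}\bigl(g^{C(\ell)}q^{-2g}\bigr)$ cannot simply be summed there. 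The issue is not cosmetic: already for $\ell=(P_1P_2)^2$ with $P_1,P_2$ distinct primes of degree one, the partial fractions of $\frac{1-qu^2}{(1-qu)(1+u)^2}$ have a double pole at $u=-1$ and the normalized error is of exact order $g\,q^{-2g}$, so some growth in $g$ and dependence on $\ell$ is unavoidable and must be tracked explicitly rather than ``swallowed''. What your convolution set-up actually gives, if you push it, is an explicit remainder controlled by the coefficients of $\prod_{P\mid\ell}\bigl(1+u^{d(P)}\bigr)^{-1}$ of index near $2g$, i.e.\ by the number of monic polynomials of degree about $2g$ all of whose prime factors divide $\ell$; the proof is only finished once you bound that quantity explicitly in terms of $\ell$ (for instance showing it is $\ll_{\varepsilon}q^{\varepsilon g}$ for the $\ell$ occurring in Section 5, so the total error there is $\ll_{\varepsilon}q^{-2g+\varepsilon g}$). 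Without such a quantitative statement, the final step of your proposal is an assertion rather than a proof.
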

\begin{proof}
See [\textbf{\ref{BF}}; Lemma 3.7].
\end{proof}

We also have the following estimate.
\begin{lemma}[P\'olya--Vinogradov inequality]
\label{nonsq}
For $\ell\in\mathcal{M}$ not a square polynomial, let $\ell= \ell_1 \ell_2^2$ with $\ell_1$ square-free. Then we have
$$\bigg|  \sum_{D \in \mathcal{H}_{2g+1}} \chi_D(\ell) \bigg| \ll_\varepsilon q^{g} | \ell_1|^{\varepsilon}.$$
\end{lemma}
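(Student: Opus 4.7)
My plan is to deploy the standard function field Pólya--Vinogradov strategy, which combines Lemmas~\ref{L1},~\ref{L3}, and~\ref{L2}. The aim is to open the outer character sum into inner short character sums, dualize these via Poisson summation to Gauss sums, and finally exploit the non-square structure of $\ell$ to obtain square-root cancellation at the level of the Gauss sums.

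First, I would apply Lemma~\ref{L1} to write
\begin{equation*}
\sum_{D\in\mathcal{H}_{2g+1}}\chi_D(\ell)=\sum_{C\mid\ell^\infty}T_{2g+1-2d(C)}(\chi_\ell)-q\sum_{C\mid\ell^\infty}T_{2g-1-2d(C)}(\chi_\ell),
\end{equation*}
where $T_m(\chi_\ell):=\sum_{h\in\mathcal{M}_m}\chi_\ell(h)$ and only $C$ with $d(C)\lesssim g$ contribute. The problem thus reduces to estimating each $T_m(\chi_\ell)$ and summing over $C$.

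Next, I would invoke Lemma~\ref{L3} to express $T_m(\chi_\ell)$ as a factor $q^m/|\ell|$ (or $q^{m+1/2}/|\ell|$, depending on the parity of $d(\ell)$) times a sum of Gauss sums $G(V,\chi_\ell)$ with $d(V)\leq d(\ell)-m-1$ (or $\leq d(\ell)-m$). By the multiplicativity in Lemma~\ref{L2}(1) and the explicit evaluations in Lemma~\ref{L2}(2), $G(V,\chi_\ell)$ factors into local Gauss sums over the prime-power divisors $P^a\,\|\,\ell$. The crucial observation is that at each prime $P\mid\ell_1$ (where $a$ is odd) the nonzero local values are bounded by $|P|^{a-1/2}$ rather than the trivial $|P|^a$, producing a saving of $|P|^{1/2}$. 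Multiplying over all such primes yields $|G(V,\chi_\ell)|\leq|\ell|\,|\ell_1|^{-1/2}$ whenever nonzero; moreover the vanishing statements in Lemma~\ref{L2}(2) constrain $V$ to a thin set (for $P\mid\ell_1$, one needs $\mathrm{ord}_P(V)=a-1$ exactly, and for $P\mid\ell_2$, one needs $\mathrm{ord}_P(V)\geq a-1$).

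Assembling these estimates and summing over $V$, $C$, and the two values of $m$ should yield the bound $q^g|\ell_1|^\varepsilon$. The principal difficulty will be the careful bookkeeping of the supports of the local Gauss sums together with the sum over $C\mid\ell^\infty$, so as to absorb divisor-type overcounts into $|\ell_1|^\varepsilon$ and ensure that $|\ell_2|$ does not enter the final bound; the key point is that primes of $\ell_2$ only restrict the support of $V$ and do not contribute extra size beyond what is controlled by $|\ell_1|^\varepsilon$. This is essentially the function field Pólya--Vinogradov bound of the type used in [\textbf{\ref{F1}}] and [\textbf{\ref{BF}}].
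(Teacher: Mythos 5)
There is a genuine gap, and it is not the deferred bookkeeping but the source of the saving itself. The paper's proof is different and much shorter: it writes $\sum_{D\in\mathcal{H}_{2g+1}}\chi_D(\ell)$ as a Perron-type contour integral of $\mathcal{L}(u,\chi_\ell)$ over $|u|=q^{-1/2}$, factors out $\mathcal{L}(u,\chi_{\ell_1})$, and invokes the Lindel\"of bound $\mathcal{L}(u,\chi_{\ell_1})\ll_\varepsilon|\ell_1|^\varepsilon$ on the critical circle (Theorem 3.4 of [\textbf{\ref{altug}}]), i.e.\ an input of Weil/Riemann-Hypothesis strength; the factor $q^{g}$ comes from $|u|^{-(2g+2)}$. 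Your route through Lemmas \ref{L1}, \ref{L3}, \ref{L2} with termwise bounds on the Gauss sums only ever produces conductor-strength savings: the local evaluations give $|G(V,\chi_\ell)|\le |\ell|\,|\ell_1|^{-1/2}$, so each inner sum $\sum_{h\in\mathcal{M}_m}\chi_\ell(h)$ is bounded by roughly $|\ell_1|^{1/2}$ (times divisor-type factors), never by anything of size $q^{g}$. Concretely, take $\ell=\ell_1$ irreducible of degree $4g+1$: in Lemma \ref{L1} only $C=1$ survives, and Lemma \ref{L3} turns $\sum_{h\in\mathcal{M}_{2g\pm1}}\chi_\ell(h)$ into $q^{m+1/2}|\ell|^{-1/2}\sum_{V\in\mathcal{M}_{n-m-1}}\chi_\ell(V)$, a dual character sum of the same short type (length $\approx q^{2g}$ against modulus of size $q^{4g+1}$); taking absolute values there gives only $\approx q^{2g}$, while the lemma asserts $\ll_\varepsilon q^{g}|\ell_1|^\varepsilon$. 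That square-root-of-family-size cancellation cannot be extracted from Lemma \ref{L2} alone; it is exactly the point where RH for curves must enter, and your proposal never invokes it.

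A secondary problem is that even in the range $d(\ell)\ll g$ the "bookkeeping" you defer is not benign: summing the local Gauss-sum bounds over $V$ produces factors like $2^{\omega(\ell)}$, and the number of $C\mid\ell^\infty$ with $d(C)\le g$ depends on the prime structure of $\ell_2$ as well as $\ell_1$ and can be exponentially large in $g$ when $\ell$ is very smooth --- which is precisely the regime ($\ell\in S(X)$, $X\ll\log_q g$) where the lemma is applied in Section \ref{PXchi}. None of these quantities is controlled by $|\ell_1|^\varepsilon$ through termwise estimates. If you want to stay within your framework, you would have to do what the paper itself does in Section \ref{Vnonsquare} for the harder twisted sums: after Poisson summation, recognise the $V$-sums as values (or moments) of $\mathcal{L}(\cdot,\chi_{V_1})$-type $L$-functions and import a Lindel\"of or moment bound there. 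The cleaner fix is simply the paper's argument: the exact generating-function identity via Perron, the factorisation $\mathcal{L}(u,\chi_\ell)=\mathcal{L}(u,\chi_{\ell_1})\prod_{P\nmid\ell_1,\,P\mid\ell_2}\bigl(1-u^{d(P)}\chi_{\ell_1}(P)\bigr)$, and the bound $\mathcal{L}(u,\chi_{\ell_1})\ll_\varepsilon|\ell_1|^\varepsilon$ on $|u|=q^{-1/2}$.
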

\begin{proof}
As in the proof of Lemma $2.2$ in [\textbf{\ref{F1}}], using Perron's formula we have
$$ \sum_{D \in \mathcal{H}_{2g+1}} \chi_D( \ell) = \frac{1}{2 \pi i} \oint_{|u|=r}\mathcal{L} (u,\chi_{\ell})\prod_{P | \ell} \Big(1-u^{2d(P)}\Big)^{-1} \frac{(1-qu^2) du}{  u^{2g+2}},$$
where we pick $r = q^{-1/2}$. If we write $\ell= \ell_1 \ell_2^2$ with $\ell_1$ square-free, then
$$ \mathcal{L}(u,\chi_{\ell})= \mathcal{L}(u,\chi_{\ell_1}) \prod_{\substack{P \nmid \ell_1 \\ P | \ell_2}} \Big(1-u^{d(P)}\chi_{\ell_1}(P) \Big).$$
Now we use the Lindel\"{o}f bound  for $\mathcal{L}(u,\chi_{\ell_1})$ (see Theorem $3.4$ in [\textbf{\ref{altug}}]),
$$ \mathcal{L}(u,\chi_{\ell_1}) \ll |\ell_1|^{\varepsilon},$$ in the integral above and the conclusion follows.
\end{proof}

\begin{lemma}[Mertens' theorem]
\label{mertens}
We have
$$ \prod_{d(P) \leq X} \bigg( 1-\frac{1}{|P|} \bigg)^{-1} = e^{\gamma} X + O(1),$$ where $\gamma$ is the Euler constant.
\end{lemma}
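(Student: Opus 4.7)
The plan is to take the logarithm, expand via the Taylor series of $\log(1-x)$, reindex by prime powers, and apply the Prime Polynomial Theorem $\sum_{f\in\mathcal{M}_n}\Lambda(f)=q^n$. Concretely,
\[
\log\prod_{d(P)\leq X}\bigg(1-\frac{1}{|P|}\bigg)^{-1}=\sum_{d(P)\leq X}\sum_{j\geq 1}\frac{1}{j|P|^j}.
\]
Setting $f=P^j$ identifies each summand with $\Lambda(f)/(|f|d(f))$; the next step is to replace the constraint $d(P)\leq X$ by the (slightly different) constraint $d(f)\leq X$, controlling the resulting discrepancy.

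Once this replacement is made, grouping by degree collapses the sum immediately:
\[
\sum_{\substack{f\in\mathcal{M}\\d(f)\leq X}}\frac{\Lambda(f)}{|f|d(f)}=\sum_{n=1}^{X}\frac{1}{nq^{n}}\sum_{f\in\mathcal{M}_n}\Lambda(f)=\sum_{n=1}^{X}\frac{1}{n}=\log X+\gamma+O\Big(\frac{1}{X}\Big),
\]
by the Prime Polynomial Theorem and the classical harmonic asymptotic. Exponentiating the resulting identity $\log\prod_{d(P)\leq X}(1-|P|^{-1})^{-1}=\log X+\gamma+O(1/X)$ then yields $e^{\gamma}X\cdot(1+O(1/X))=e^{\gamma}X+O(1)$, which is the claim. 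Notice that this route explains why the constant appearing in front of $X$ is exactly the classical Euler $\gamma$ (and carries no extra $q$-dependence): it is inherited directly from $\sum_{n\leq X}1/n$.

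The one delicate piece is controlling the difference between $\{d(P)\leq X,\ j\geq 1\}$ and $\{d(f)\leq X\}$, which consists of prime-power terms with $d(P)\leq X$ but $jd(P)>X$. For a prime $P$ of degree $d$ the inner tail $\sum_{j>X/d}1/(j|P|^j)$ is a geometric series whose first term has $|P|^j\geq q^{X+1}$; combining with $\pi_q(d)\leq q^d/d$ and summing over $d\leq X$ produces a total contribution of size $O(1/X)$, comfortably inside the error budget. I do not anticipate any deeper obstacle beyond this short tail estimate; the proof is essentially a function-field transcription of Mertens' classical argument, made simpler by the exact identity $\sum_{f\in\mathcal{M}_n}\Lambda(f)=q^n$.
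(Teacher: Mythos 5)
Your proposal is correct, and its two pillars are exactly those of the paper's proof: the Prime Polynomial Theorem identity $\sum_{f\in\mathcal{M}_n}\Lambda(f)=q^n$, which gives $\sum_{d(f)\leq X}\Lambda(f)/(|f|d(f))=\sum_{n\leq X}1/n=\log X+\gamma+O(1/X)$, and exponentiation of a $\log X+\gamma+O(1/X)$ expansion. The difference is organizational: the paper first obtains $\sum_{d(P)\leq X}1/|P|=\log X+c+O(1/X)$ by partial summation, exponentiates to get $e^{C}X+O(1)$ with an unspecified constant $C$, and only then identifies $C=\gamma$ by evaluating the von Mangoldt sum in two ways; you instead compare $\log\prod_{d(P)\leq X}(1-1/|P|)^{-1}$ directly with $\sum_{d(f)\leq X}\Lambda(f)/(|f|d(f))$ and bound the discrepancy, which avoids partial summation and the intermediate constant altogether. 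Your tail estimate is sound: for each $P$ with $d(P)=d\leq X$ the terms omitted have $jd>X$, so $|P|^{j}\geq q^{X+1}$ and the inner tail is $\ll q^{-(X+1)}$ (indeed $\ll (d/X)q^{-(X+1)}$ using the $1/j$ factor), and summing against $\pi_q(d)\leq q^d/d$ over $d\leq X$ gives $O(1/X)$, which is exactly what is needed so that exponentiation yields $e^{\gamma}X+O(1)$ rather than a weaker $e^{\gamma}X\cdot O(1)$. In short, both arguments are function-field transcriptions of Mertens' theorem; yours trades the paper's ``unknown constant, then pin it down'' detour for a single direct comparison with the prime-power sum.
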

\begin{proof}
A more general version of Mertens' estimate was proved in [\textbf{\ref{R}}; Theorem 3]. Here we give a simpler proof in the above form for completeness.

Using the Prime Polynomial Theorem,
$$ \sum_{d(P) \leq X} \frac{d(P)}{|P|} = X+O(1),$$ and hence by partial summation, we get that
$$ \sum_{d(P) \leq X} \frac{1}{|P|} = \log X + c + O \Big( \frac{1}{X} \Big)$$ for some constant $c$.  Then
\begin{align*}
\sum_{d(P) \leq X} \log &\bigg( 1- \frac{1}{|P|} \bigg)^{-1} = \sum_{d(P) \leq X} \frac{1}{|P|} + \sum_{d(P) \leq X} \sum_{j=2}^{\infty} \frac{1}{j|P|^j}\\
&= \log X +c +  \sum_{P\in\mathcal{P}} \sum_{j=2}^{\infty} \frac{1}{j|P|^j} - \sum_{d(P) >X}  \sum_{j=2}^{\infty} \frac{1}{j|P|^j} + O \Big( \frac{1}{X} \Big) \\
&= \log X + C +O \Big( \frac{1}{X} \Big),
\end{align*} where $C = c +  \sum_{P\in\mathcal{P}} \sum_{j=2}^{\infty} \frac{1}{j|P|^j}$. Exponentiating and using the fact that for $x<1$, $e^x=1+O(x)$, we get that
$$ \prod_{d(P) \leq X} \bigg( 1-\frac{1}{|P|} \bigg)^{-1} = e^{C} X + O(1),$$ and it remains to show that $C=\gamma$. 

Now by the Prime Polynomial Theorem,
$$ \sum_{\substack{f\in\mathcal{M}\\ d(f) \leq X}} \frac{\Lambda(f)}{|f| d(f)} = \sum_{n \leq X} \frac{1}{n} = \log X + \gamma + O \Big( \frac{1}{X} \Big).$$
Combining the formulas above, we also have that
\begin{align*}
\sum_{\substack{f\in\mathcal{M}\\ d(f) \leq X}} \frac{\Lambda(f)}{|f| d(f)} &= \sum_{d(P) \leq X} \frac{1}{|P|} + \sum_{P\in\mathcal{P}}\sum_{2\leq j\leq X/d(P)} \frac{1}{j |P|^j} \\
&= \log X + c+ \sum_{P\in\mathcal{P}} \sum_{j=2}^{\infty}  \frac{1}{j|P|^j} - \sum_{j=2}^{\infty} \sum_{d(P) > X/j} \frac{1}{j|P|^j}  + O \Big( \frac{1}{X} \Big) \\
&= \log X + C + O \Big( \frac{1}{X}\Big).
\end{align*}
Using the previous two identities, it follows that $C = \gamma$, which finishes the proof.
\end{proof}


\section{Hybrid Euler-Hadamard product}

We start with an explicit formula.

\begin{lemma}\label{explicit}
Let $u(x)$ be a real, non-negative, $C^{\infty}$ function with mass $1$ and compactly supported on $[q,q^{1+1/X}]$. Let $v(t)=\int_{t}^{\infty}u(x)dx$ and let $\widetilde{u}$ be the Mellin transform of $u$. Then for $s$ not a zero of $L(s,\chi_D)$ we have
\begin{eqnarray*}
&&-\frac{L'}{L}(s,\chi_D)=\sum_{f\in\mathcal{M}}\frac{(\log q)\Lambda(f)\chi_D(f)}{|f|^s}v\big(q^{d(f)/ X}\big)-\sum_{\rho}\frac{\widetilde{u}\big(1-(s-\rho) X\big)}{s-\rho},
\end{eqnarray*}
where the sum over $\rho$ runs over all the zeros of $L(s,\chi_D)$.
\end{lemma}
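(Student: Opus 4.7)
The plan is to emulate the classical Bombieri--Hejhal-type derivation: express the smoothing factor $v(q^{d(f)/X})$ as a Mellin--Perron contour integral against $\widetilde{u}$, plug this into the Dirichlet series for $-L'/L$, interchange, and then recover the stated identity by shifting the contour left and collecting residues at $z=0$ and at the zeros $\rho$ of $L(\cdot,\chi_D)$.

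First I would record the Mellin-side identity for the smoothing. Since $v(t)=\int_t^\infty u(x)\,dx$, one integration by parts gives $\widetilde{v}(s)=\widetilde{u}(s+1)/s$, so for any $c_0>0$, Mellin inversion yields
\begin{equation*}
v\bigl(q^{d(f)/X}\bigr)=\frac{1}{2\pi i}\int_{(c_0)}\widetilde{u}(1+zX)\,|f|^{-z}\,\frac{dz}{z}.
\end{equation*}
Next, take $c>0$ large enough that $\mathrm{Re}(s+c)>1$ and consider the auxiliary integral
\begin{equation*}
I(s):=\frac{1}{2\pi i}\int_{(c)}\Bigl(-\frac{L'}{L}(s+z,\chi_D)\Bigr)\widetilde{u}(1+zX)\,\frac{dz}{z}.
\end{equation*}
Expanding the logarithmic derivative as a convergent Dirichlet series $\sum_f (\log q)\Lambda(f)\chi_D(f)|f|^{-(s+z)}$ and swapping the sum with the integral (justified by rapid decay of $\widetilde{u}$ on vertical lines), the previous Mellin identity shows
\begin{equation*}
I(s)=\sum_{f\in\mathcal{M}}\frac{(\log q)\Lambda(f)\chi_D(f)}{|f|^s}\,v\bigl(q^{d(f)/X}\bigr),
\end{equation*}
which is the first sum on the right-hand side of the lemma.

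The second step is to shift the contour from $\mathrm{Re}(z)=c$ past $\mathrm{Re}(z)=0$ and on to $\mathrm{Re}(z)\to-\infty$, using a rectangular contour of height $2T$ with $T$ chosen to avoid the zero-ordinates $\mathrm{Im}(\rho-s)$. The integrand has a simple pole at $z=0$ with residue $(-L'/L)(s,\chi_D)\widetilde{u}(1)=-\tfrac{L'}{L}(s,\chi_D)$, using the normalisation $\widetilde{u}(1)=\int_0^\infty u(x)\,dx=1$; and, since $s$ is assumed not to be a zero, a simple pole at each $z=\rho-s$ with residue $\widetilde{u}(1-(s-\rho)X)/(s-\rho)$. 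The residue theorem then gives
\begin{equation*}
I(s)=-\frac{L'}{L}(s,\chi_D)+\sum_\rho \frac{\widetilde{u}(1-(s-\rho)X)}{s-\rho}+(\text{boundary integrals}),
\end{equation*}
and rearranging produces the stated formula.

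The main technical obstacle is justifying that all the boundary contributions vanish in the limit, since in the function-field setting $L(\cdot,\chi_D)$ is a polynomial in $q^{-s}$ and hence $L'/L(s+z,\chi_D)$ is $(2\pi i/\log q)$-periodic in $\mathrm{Im}(z)$ with infinitely many zeros in any vertical strip. The point is that $\widetilde{u}$ is the Mellin transform of a compactly supported $C^\infty$ function, so it decays faster than any polynomial on vertical lines, which both (a) makes the sum $\sum_\rho \widetilde{u}(1-(s-\rho)X)/(s-\rho)$ converge (the zeros recur with period $2\pi/\log q$ in the imaginary direction), and (b) kills the horizontal segments at $|\mathrm{Im}(z)|=T$ for any sequence $T\to\infty$ chosen to stay uniformly away from the zero ordinates, where $L'/L$ remains bounded by periodicity. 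For the far-left vertical segment $\mathrm{Re}(z)=-c'$ with $c'\to\infty$, one uses that $u$ is supported in $[q,q^{1+1/X}]$ with $q>1$ to obtain exponential decay $\widetilde{u}(1+zX)=O(q^{\mathrm{Re}(z)X})$, while the factorisation $L(s+z,\chi_D)=\prod_{j=1}^{2g}(1-\alpha_j q^{-(s+z)})$ shows $L'/L(s+z,\chi_D)\to -2g\log q$ as $\mathrm{Re}(z)\to-\infty$; so this segment vanishes in the limit. Once these three vanishing statements are in place, the identity follows directly.
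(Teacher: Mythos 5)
Your proof is correct and follows exactly the route the paper sketches: it starts from the same contour integral $-\frac{1}{2\pi i}\int_{(c)}\frac{L'}{L}(s+z,\chi_D)\widetilde{u}(1+zX)\,\frac{dz}{z}$, evaluates it once by the Mellin identity for $v$ and once by shifting left and collecting the residues at $z=0$ and $z=\rho-s$, and your treatment of the boundary terms and of the periodicity of $L'/L$ in the function-field setting supplies exactly the details the paper leaves to the reader by citing Bombieri--Hejhal.
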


This lemma can be proved in a familiar way [\textbf{\ref{BH}}], beginning with the integral
\begin{equation*}
-\frac{1}{2\pi i}\int_{(c)}\frac{L'}{L}(s+z,\chi_D)\widetilde{u}(1+zX)\frac{dz}{z},
\end{equation*}
where $c=\max\{2,2-\textrm{Re}(s)\}$.

Following the arguments in [\textbf{\ref{GHK}}], we can integrate the formula in Lemma \ref{explicit} to give a formula
for $L(s,\chi_D)$: for $s$ not equal to one of the zeros and $\textrm{Re}(s)\geq0$ we have
\begin{eqnarray}\label{explicitintegrate}
L(s,\chi_D)=\exp\bigg(\sum_{f\in\mathcal{M}}\frac{\Lambda(f)\chi_D(f)}{|f|^sd(f)}v\big(q^{d(f)/ X}\big)\bigg)Z_{X}(s,\chi_D).
\end{eqnarray}
To remove the former restriction on $s$, we note that we may interpret $\exp\big(-U(z)\big)$ to be asymptotic to $Cz$ for some constant $C$ as $z\rightarrow0$, so both sides of \eqref{explicitintegrate} vanish at the zeros. Thus \eqref{explicitintegrate} holds for all $\textrm{Re}(s)\geq0$. Furthermore, since $v(q^{d(f)/X})=1$ for $d(f)\leq X$ and $v(q^{d(f)/X})=0$ for $d(f)\geq X+1$, the first factor in \eqref{explicitintegrate} is precisely $P_{X}(s,\chi_D)$, and that completes the proof of Theorem \ref{HEH}.

\section{Moments of the partial Euler product}\label{PXchi}

Recall that
\begin{equation*}
P_{X}(s,\chi_D)=\exp\bigg( \sum_{\substack{f\in\mathcal{M}\\d(f)\leq X}}\frac{\Lambda(f)\chi_D(f)}{|f|^{s}d(f)}\bigg).
\end{equation*}
We first show that we can approximate $P_X(s,\chi_D)^k$ by
\begin{equation*}
P_{k,X}^{*}(s,\chi_D)=\prod_{d(P)\leq X/2}\bigg( 1-\frac{\chi_D(P)}{|P|^{s}}\bigg) ^{-k}\prod_{X/2<d(P)\leq X}\bigg( 1+\frac{k\chi_{D}(P)}{|P|^s}+\frac{k^2\chi_{D}(P)^2}{2|P|^{2s}}\bigg)
\end{equation*}
for any $k\in\mathbb{R}$.

\begin{lemma}\label{PP*}
For any $k\in\mathbb{R}$ we have
\begin{equation*}
P_{X}(s,\chi_D)^{k}=\Big(1+O_{k}\big(q^{-X/6}/X\big)\Big)P_{k,X}^{*}(s,\chi_D)
\end{equation*}
uniformly for $\emph{Re}(s)=\sigma\geq1/2$.
\end{lemma}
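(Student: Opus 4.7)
The plan is to compute $\log\bigl(P_X(s,\chi_D)^k / P_{k,X}^*(s,\chi_D)\bigr)$ and show that it is $O_k(q^{-X/6}/X)$ uniformly in $\sigma = \text{Re}(s) \geq 1/2$, so that exponentiating yields the multiplicative error in the lemma.

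First I would expand both logarithms as sums over primes. From the definition of $\Lambda$,
\[
\log P_X(s,\chi_D)^k = k \sum_{P \in \mathcal{P}} \sum_{\substack{j \geq 1 \\ jd(P) \leq X}} \frac{\chi_D(P)^j}{j|P|^{js}},
\]
whereas the log of $P_{k,X}^*$ splits according to the two products: the first factor contributes $k \sum_{d(P) \leq X/2} \sum_{j \geq 1} \chi_D(P)^j/(j|P|^{js})$, while the second is handled via the Taylor expansion $\log(1 + y + y^2/2) = y - y^3/6 + O(y^4)$ with $y = k\chi_D(P)/|P|^s$, valid because $|y| \leq |k|\, q^{-X/4}$ for $d(P) > X/2$ and $\sigma \geq 1/2$. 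After subtraction, the $j=1$ terms from the range $X/2 < d(P) \leq X$ and the full series for primes of degree $\leq X/2$ cancel, leaving
\[
\log P_X(s,\chi_D)^k - \log P_{k,X}^*(s,\chi_D) = -k \sum_{d(P) \leq X/2} \sum_{\substack{j \geq 1 \\ jd(P) > X}} \frac{\chi_D(P)^j}{j|P|^{js}} + O_k\Bigl(\sum_{X/2 < d(P) \leq X} \frac{1}{|P|^{3\sigma}}\Bigr).
\]
The second error term is disposed of immediately by the Prime Polynomial Theorem: $\sum_{X/2 < n \leq X} \pi_q(n) q^{-3n\sigma} \ll \sum_{n > X/2} q^{-n/2}/n \ll q^{-X/4}/X$.

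For the principal error I would fix $n = d(P) \leq X/2$ and set $m = \lfloor X/n\rfloor + 1$, the smallest $j$ with $jn > X$. Since $q \geq 5$ the tail is geometric, so $\sum_{j \geq m} (j|P|^{j\sigma})^{-1} \ll (m\,q^{mn\sigma})^{-1}$. The combinatorially important observation is that $m \geq 3$ throughout (because $n \leq X/2$), and that $m$ takes each value $m \geq 3$ precisely on the range $n \in (X/m, X/(m-1)]$. Summing over primes of degree $n$ using $\pi_q(n) \leq q^n/n$, the total contribution from a given value of $m$ is, for $\sigma \geq 1/2$, bounded by $O\bigl(q^{-X(1/2 - 1/m)}/X\bigr)$, the exponent being attained near the left endpoint $n \approx X/m$. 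The dominant term comes from $m = 3$ and equals $O(q^{-X/6}/X)$; every $m \geq 4$ contributes at most $O(q^{-X/4}/X)$, which is strictly smaller. Summing the geometric series in $m$ then gives $O_k(q^{-X/6}/X)$ overall.

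The main obstacle is recognising that the naive bound $q^{mn\sigma} \geq q^{X\sigma} \geq q^{X/2}$, which would only yield $O(1/X)$, is wasteful: the correct rate $q^{-X/6}/X$ is produced by the precise balance between the density $\pi_q(n) \sim q^{X/3}/(X/3)$ of primes of degree $n \approx X/3$ and the size $q^{-3n\sigma} \sim q^{-X/2}$ of the first omitted ($j = m = 3$) term. Once this bookkeeping is carried out, combining the two error estimates gives $\log(P_X^k / P_{k,X}^*) = O_k(q^{-X/6}/X)$, and exponentiation (legitimate since the argument is $o(1)$) delivers the multiplicative factor $1 + O_k(q^{-X/6}/X)$ required by the lemma.
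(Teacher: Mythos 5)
Your proposal is correct and follows essentially the same route as the paper: take logarithms, cancel the common prime sums, bound the Taylor-expansion error from primes of degree in $(X/2,X]$ by $q^{-X/4}/X$, and bound the omitted tail terms $j>\lfloor X/d(P)\rfloor$ by grouping primes according to the value of $\lfloor X/d(P)\rfloor$, with the dominant contribution $q^{-X/6}/X$ coming from degrees near $X/3$. Your bookkeeping with $m=\lfloor X/n\rfloor+1$ matches the paper's dyadic-style decomposition over $X/(j+1)<d(P)\leq X/j$, so no new ideas or gaps arise.
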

\begin{proof}
For any $P\in\mathcal{P}$ we let $N_{P}=\lfloor X/d(P)\rfloor$, the integer part of $X/d(P)$. Then we have
\begin{equation*}
P_{X}(s,\chi_D)^{k}=\exp\bigg(k\sum_{d(P)\leq X}\sum_{1\leq j\leq N_{P}}\frac{\chi_D(P^j)}{j|P|^{js}}\bigg)
\end{equation*}
and
\begin{align*}
P_{k,X}^{*}(s,\chi_D)=&\exp\bigg(k\sum_{d(P)\leq X/2}\sum_{j=1}^{\infty}\frac{\chi_D(P)^j}{j|P|^{js}}+\sum_{X/2<d(P)\leq X}\frac{k\chi_D(P)}{|P|^{s}}\\
&\qquad\qquad\qquad\qquad\qquad\qquad\qquad\qquad\qquad+O_k\bigg(\sum_{X/2<d(P)\leq X}\frac{1}{|P|^{3\sigma}}\bigg)\bigg).
\end{align*}
We note that $N_{P}=1$ for $X/2<d(P)\leq X$, so
\begin{equation*}
P_{X}(s,\chi_D)^{k}P_{k,X}^{*}(s,\chi_D)^{-1}=\exp\bigg(-k\sum_{d(P)\leq X/2}\sum_{j>N_{P}}\frac{\chi_D(P)^j}{j|P|^{js}}+O_k\bigg(\sum_{X/2<d(P)\leq X}\frac{1}{|P|^{3\sigma}}\bigg)\bigg).
\end{equation*}
The expression in the exponent is
\begin{eqnarray*}
&\ll_k&\sum_{d(P)\leq X/2}\frac{1}{|P|^{\sigma(N_{P}+1)}}+\sum_{X/2<d(P)\leq X}\frac{1}{|P|^{3\sigma}}\nonumber\\
&\ll_k&\sum_{j=2}^{X}\sum_{X/(j+1)<d(P)\leq X/j}\frac{1}{|P|^{(j+1)/2}}+\sum_{X/2<d(P)\leq X}\frac{1}{|P|^{3/2}}\\
&\ll_k&\sum_{j=2}^{X}\frac{jq^{-(j-1)X/2(j+1)}}{X}+\frac{q^{-X/4}}{X}\ll_k\frac{q^{-X/6}}{X}.
\end{eqnarray*}
Hence $P_{X}(s,\chi_D)^{k}P_{k,X}^{*}(s,\chi_D)^{-1}=1+O_{k}\big(q^{-X/6}/X\big)$ as claimed.
\end{proof}

Next we write $P_{k,X}^{*}(s,\chi_{D})$ as a Dirichlet series
\begin{equation*}
\sum_{\ell\in\mathcal{M}}\frac{\alpha_{k}(\ell)\chi_{D}(\ell)}{|\ell|^s}=\prod_{d(P)\leq X/2}\bigg( 1-\frac{\chi_D(P)}{|P|^s}\bigg) ^{-k}\prod_{X/2<d(P)\leq X}\bigg(1+\frac{k\chi_{D}(P)}{|P|^s}+\frac{k^2\chi_{D}(P)^2}{2|P|^{2s}}\bigg) .
\end{equation*}
We note that $\alpha_{k}(\ell)\in\mathbb{R}$, and if we denote by $S(X)$ the set of $X$-smooth polynomials, i.e.
\begin{displaymath}
S(X)=\{\ell\in\mathcal{M}:P|\ell\rightarrow d(P)\leq X\},
\end{displaymath}
then $\alpha_{k}(\ell)$ is multiplicative, and $\alpha_{k}(\ell)=0$ if $\ell\notin S(X)$. We also have $0\leq\alpha_{k}(\ell)\leq\tau_{|k|}(\ell)$ for all $\ell\in\mathcal{M}$. Moreover, $\alpha_{k}(\ell)=\tau_{k}(\ell)$  if $\ell\in S(X/2)$, and $\alpha_{k}(P)=k$ and $\alpha_{k}(P^2)=k^2/2$ for all $P\in\mathcal{P}$ with $X/2<d(P)\leq X$. 

We now truncate the series, for $s=1/2$, at $d(\ell)\leq \vartheta g$. From the Prime Polynomial Theorem we have
\begin{align}\label{truncation}
\sum_{\substack{\ell\in S(X)\\ d(\ell)> \vartheta g}}\frac{\alpha_{k}(\ell)\chi_{D}(\ell)}{|\ell|^{1/2}}&\leq \sum_{\ell\in S(X)}\frac{\tau_{|k|}(\ell)}{|\ell|^{1/2}}\Big(\frac{|\ell|}{q^{\vartheta g}}\Big)^{c/4}=q^{-c\vartheta g/4}\prod_{d(P)\leq X}\bigg(1-\frac{1}{|P|^{(2-c)/4}}\bigg)^{-|k|}\nonumber\\
&\ll q^{-c\vartheta g/4}\exp\bigg(O_k\Big(\sum_{d(P)\leq X}\frac{1}{|P|^{(2-c)/4}}\Big)\bigg)\nonumber\\
&\ll q^{-c\vartheta g/4}\exp\bigg(O_k\Big(\frac{q^{(2+c)X/4}}{X}\Big)\bigg)\ll_\varepsilon q^{-c\vartheta g/4+\varepsilon g},
\end{align}
as $X\leq (2-c)\log g/\log q$. Hence
\begin{equation}\label{P*}
P_{k,X}^{*}(\chi_{D}):=P_{k,X}^{*}(\tfrac12,\chi_{D})=\sum_{\substack{\ell\in S(X)\\ d(\ell)\leq\vartheta g}}\frac{\alpha_{k}(\ell)\chi_{D}(\ell)}{|\ell|^{1/2}}+O_\varepsilon(q^{-c\vartheta g/4+\varepsilon g})
\end{equation}
for all $k\in\mathbb{R}$ and $\vartheta>0$, and it follows that
\begin{equation*}
\Big\langle P_{k,X}^{*}(\chi_{D}) \Big\rangle_{\mathcal{H}_{2g+1}}=\frac{1}{|\mathcal{H}_{2g+1}|}\sum_{\substack{\ell\in S(X)\\ d(\ell)\leq\vartheta g}}\frac{\alpha_{k}(\ell)}{|\ell|^{1/2}}\sum_{D\in\mathcal{H}_{2g+1}} \chi_{D}(\ell)+O_\varepsilon\big(q^{-c\vartheta g/4+\varepsilon g}\big).
\end{equation*}

We first consider the contribution of the terms with $\ell=\square$. Denote this by $I(\ell=\square)$. By Lemma \ref{L5}, 
\begin{displaymath}
I\big(\ell=\square\big)=\sum_{\substack{\ell\in S(X)\\ d(\ell)\leq\vartheta g/2}}\frac{\alpha_{k}(\ell^2)}{|\ell|}\prod_{\substack{P\in\mathcal{P}\\P|\ell}}\bigg(1+\frac{1}{|P|}\bigg)^{-1}+O_\varepsilon\big(q^{-2g+\varepsilon g}\big).
\end{displaymath}
The sum can be extended to all $\ell\in S(X)$ as, like in \eqref{truncation},
\begin{displaymath}
\sum_{\substack{\ell\in S(X)\\ d(\ell)>\vartheta g/2}}\frac{\alpha_{k}(\ell^2)}{|\ell|}\prod_{\substack{P\in\mathcal{P}\\P|\ell}}\bigg(1+\frac{1}{|P|}\bigg)^{-1}\ll_\varepsilon q^{-\vartheta g/4+\varepsilon g}.
\end{displaymath}
So, using the multiplicativity of $\alpha_k(\ell)$ and Lemma \ref{mertens},
\begin{align*}
I\big(&\ell=\square\big)=\prod_{d(P)\leq X}\bigg(1+\sum_{j=1}^{\infty}\frac{\alpha_{k}(P^{2j})}{|P|^{j-1}(1+|P|)}\bigg)+O_\varepsilon\big(q^{-\vartheta g/4+\varepsilon g}\big)\\
&=\prod_{d(P)\leq X/2}\bigg(1+\sum_{j=1}^{\infty}\frac{\tau_{k}(P^{2j})}{|P|^{j-1}(1+|P|)}\bigg)\prod_{X/2<d(P)\leq X}\bigg(1+\frac{k^2}{2(1+|P|)}+O_\varepsilon\big(|P|^{-2+\varepsilon}\big)\bigg)\\
&\qquad\qquad+O_\varepsilon\big(q^{-\vartheta g/4+\varepsilon g}\big)\\
&=\Big(1+O\big(q^{-X/2}/X\big)\Big)\prod_{d(P)\leq X/2}\bigg[\bigg(1-\frac{1}{|P|}\bigg)^{k(k+1)/2}\bigg(1+\sum_{j=1}^{\infty}\frac{\tau_{k}(P^{2j})}{|P|^{j-1}(1+|P|)}\bigg)\bigg]\\
&\qquad\qquad\prod_{d(P)\leq X/2}\bigg(1-\frac{1}{|P|}\bigg)^{-k(k+1)/2}\prod_{X/2<d(P)\leq X}\bigg(1-\frac{1}{|P|}\bigg)^{-k^2/2}+O_\varepsilon\big(q^{-\vartheta g/4+\varepsilon g}\big)\\
&=\bigg(1+O\Big(\frac 1X\Big)\bigg)2^{-k/2}\mathcal{A}_k\big(e^\gamma X\big)^{k(k+1)/2}+O_\varepsilon\big(q^{-\vartheta g/4+\varepsilon g}\big).
\end{align*}

Now we consider the contribution from $\ell \neq \square$, which we denote by $I ( \ell \neq \square)$. Using Lemma \ref{nonsq} we have that
$$I(\ell \neq \square) \ll q^{-g} \sum_{\ell \in S(X)} \frac{ \tau_{|k|}(\ell)}{|\ell|^{1/2-\varepsilon}}.$$ As in \eqref{truncation},
\begin{align*}
 \sum_{\ell \in S(X)} \frac{ \tau_{|k|}(\ell)}{|\ell|^{1/2-\varepsilon}}& \ll \prod_{d(P) \leq X} \bigg( 1- \frac{1}{|P|^{1/2-\varepsilon}} \bigg)^{-|k|}\ll \exp \bigg( O_k\Big( \sum_{d(P) \leq X} \frac{1}{|P|^{1/2 -\varepsilon}}\Big) \bigg)\\
&\ll \exp \bigg( O_k\Big(\frac{q^{(1/2+\varepsilon)X}}{X}\Big) \bigg)\ll_\varepsilon q^{\varepsilon g}.
\end{align*}
Hence
$$I( \ell \neq \square) \ll_\varepsilon q^{-g+\varepsilon g},$$
and we obtain the theorem.

\section{Twisted moments of $L(\frac12,\chi_D)$}
\label{twist}
In this section, we are interested in the $k$-th twisted moment
\[
I_k(\ell;g)=\frac{1}{|\mathcal{H}_{2g+1}|}\sum_{D\in\mathcal{H}_{2g+1}}L(\tfrac12,\chi_D)^k\chi_D(\ell).
\]

We first recall the approximate functional equation,
\[
L(\tfrac12,\chi_D)^k=\sum_{f\in\mathcal{M}_{kg}}\frac{\tau_k(f)\chi_D(f)}{|f|^{1/2}}+\sum_{f\in\mathcal{M}_{kg-1}}\frac{\tau_k(f)\chi_D(f)}{|f|^{1/2}}
\]
for any $k\in\mathbb{N}$. So
\[
I_k(\ell;g)=S_{k}(\ell;kg)+S_{k}(\ell;kg-1),
\]
where
\[
S_k(\ell;N)=\frac{1}{|\mathcal{H}_{2g+1}|}\sum_{f\in\mathcal{M}_{\leq N}}\frac{\tau_k(f)}{|f|^{1/2}}\sum_{D\in\mathcal{H}_{2g+1}}\chi_D(f\ell)
\]
for $N\in\{kg,kg-1\}$. Define
\begin{align*}
S_{k,1}(\ell;N)=\frac{1}{|\mathcal{H}_{2g+1}|}\sum_{f\in\mathcal{M}_{\leq N}}\frac{\tau_k(f)}{|f|^{1/2}}\sum_{C|(f\ell)^\infty}\sum_{h\in\mathcal{M}_{2g+1-2d(C)}}\chi_{f\ell}(h)
\end{align*}
and
\[
S_{k,2}(\ell;N)=\frac{1}{|\mathcal{H}_{2g+1}|}\sum_{f\in\mathcal{M}_{\leq N}}\frac{\tau_k(f)}{|f|^{1/2}}\sum_{C|(f\ell)^\infty}\sum_{h\in\mathcal{M}_{2g-1-2d(C)}}\chi_{f\ell}(h)
\]
so that, in view of Lemma \ref{L1},
\[
S_k(\ell;N)=S_{k,1}(\ell;N)-qS_{k,2}(\ell;N).
\]

We further write 
\[
S_{k,1}(\ell;N)=S_{k,1}^{\textrm{o}}(\ell;N)+S_{k,1}^{\textrm{e}}(\ell;N)
\]
according to whether the degree of the product $f\ell$ is even or odd, respectively. Lemma \ref{L3} and Lemma \ref{L2} then lead to
\begin{align}
S_{k,1}^{\textrm{o}}(\ell;N)=\frac{q^{3/2}}{(q-1)|\ell|}\sum_{\substack{f\in\mathcal{M}_{\leq N}\\d(f\ell)\ \textrm{odd}}}\frac{\tau_k(f)}{|f|^{3/2}}\sum_{\substack{C|(f\ell)^\infty\\d(C)\leq g}}\frac{1}{|C|^2}\sum_{V\in\mathcal{M}_{d(f\ell)-2g-2+2d(C)}}G(V,\chi_{f\ell})
\label{s1odd}
\end{align}
and
\[
S_{k,1}^{\textrm{e}}(\ell;N)=M_{k,1}(\ell;N)+S_{k,1}^{\textrm{e}}(\ell;N;V\ne0),
\]
where
\begin{align}\label{M}
M_{k,1}(\ell;N)=\frac{q}{(q-1)|\ell|}\sum_{\substack{f\in\mathcal{M}_{\leq N}\\f\ell=\square}}\frac{\tau_k(f)\varphi(f\ell)}{|f|^{3/2}}\sum_{\substack{C|(f\ell)^\infty\\d(C)\leq g}}\frac{1}{|C|^2}
\end{align}
and
\begin{align}\label{Ske}
S_{k,1}^{\textrm{e}}(\ell;N;V\ne0)&=\frac{q}{(q-1)|\ell|}\sum_{\substack{f\in\mathcal{M}_{\leq N}\\d(f\ell)\ \textrm{even}}}\frac{\tau_k(f)}{|f|^{3/2}}\sum_{\substack{C|(f\ell)^\infty\\d(C)\leq g}}\frac{1}{|C|^2}\\
&\bigg(q\sum_{V\in\mathcal{M}_{\leq d(f\ell)-2g-3+2d(C)}}G(V,\chi_{f\ell})-\sum_{V\in\mathcal{M}_{\leq d(f\ell)-2g-2+2d(C)}}G(V,\chi_{f\ell})\bigg).\nonumber
\end{align}
We also decompose
\[
S_{k,1}^{\textrm{e}}(\ell;N;V\ne0)=S_{k,1}^{\textrm{e}}(\ell;N;V=\square)+S_{k,1}^{\textrm{e}}(\ell;N;V\ne\square)
\]
correspondingly to whether $V$ is a square or not.

We treat $S_{k,2}(\ell;N)$ similarly and define the functions $S_{k,2}^{\textrm{o}}(\ell;N)$, $M_{k,2}(\ell;N)$, $S_{k,2}^{\textrm{e}}(\ell;N;V=\square)$ and $S_{k,2}^{\textrm{e}}(\ell;N;V\ne\square)$ in the same way. Further denote 
\[
M_{k}(\ell;N)=M_{k,1}(\ell;N)-qM_{k,2}(\ell;N),\qquad M_{k}(\ell)=M_{k}(\ell;kg)+M_{k}(\ell;kg-1)
\] 
and
$$S_{k}^{\textrm{e}}(\ell;V=\square)=S_{k}^{\textrm{e}}(\ell;kg;V=\square)+S_{k}^{\textrm{e}}(\ell;kg-1;V=\square),$$
where $$S_{k}^{\textrm{e}}(\ell;N;V=\square)=S_{k,1}^{\textrm{e}}(\ell;N;V=\square)-qS_{k,2}^{\textrm{e}}(\ell;N;V=\square).$$ 

We shall next consider $M_{k}(\ell)$. The term $S_{k}^{\textrm{e}}(\ell;V=\square)$ also contributes to the main term and will be evaluated in Section \ref{Vsquare}. We will see that that combines nicely with the contribution from $M_{k}(\ell)$ for $k=1$. For the terms $S_{k,1}^{\textrm{o}}(\ell;N)$ and $S_{k,2}^{\textrm{o}}(\ell;N)$, we note that the summations over $V$ are over odd degree polynomials, so $V\ne\square$ in these cases. Let $S_{k}^{\textrm{o}}(\ell;N) = S_{k,1}^{\textrm{o}}(\ell;N)-qS_{k,2}^{\textrm{o}}(\ell;N)$, $S_{k}^{\textrm{e}}(\ell;N;V\ne\square)=S_{k,1}^{\textrm{e}}(\ell;N;V\ne\square)-qS_{k,2}^{\textrm{e}}(\ell;N;V\ne\square)$ and 
\begin{equation}\label{Sknonsquare}
S_k(\ell;N;V \neq \square) = S_{k}^{\textrm{e}}(\ell;N;V\ne\square) +S_{k}^{\textrm{o}}(\ell;N)
\end{equation} be the total contribution from $V \neq \square$.
We will bound $S_k(\ell;N;V \neq \square)$ in Section \ref{Vnonsquare}.

\subsection{Evaluate $M_{k}(\ell)$}
\label{main}
We first note that the sum over $C$ in \eqref{M} can be extended to all $C|(f\ell)^\infty$ with the cost of an error of size $O_\varepsilon(q^{N/2-2g+\varepsilon g})=O_\varepsilon\big(q^{(k-4)g/2+\varepsilon g}\big)$, as
\begin{equation}\label{Cextend}
\sum_{\substack{C|(f\ell)^\infty\\d(C)> g}}\frac{1}{|C|^2}\ll \sum_{C|(f\ell)^\infty}\frac{1}{|C|^2}\Big(\frac{|C|}{q^{g}}\Big)^{2-\varepsilon}=q^{-2g+\varepsilon g}\prod_{P|f\ell}\bigg(1-\frac{1}{|P|^\varepsilon}\bigg)^{-1}.
\end{equation}
So
\begin{align*}
M_{k,1}(\ell;N)&=\frac{q}{(q-1)|\ell|}\sum_{\substack{f\in\mathcal{M}_{\leq N}\\f\ell=\square}}\frac{\tau_k(f)\varphi(f\ell)}{|f|^{3/2}}\prod_{P|f\ell}\bigg(1-\frac{1}{|P|^2}\bigg)^{-1}+O_\varepsilon\big(q^{(k-4)g/2+\varepsilon g}\big)\\
&=\frac{q}{(q-1)}\sum_{\substack{f\in\mathcal{M}_{\leq N}\\f\ell=\square}}\frac{\tau_k(f)}{|f|^{1/2}}\prod_{P|f\ell}\bigg(1+\frac{1}{|P|}\bigg)^{-1}+O_\varepsilon\big(q^{(k-4)g/2+\varepsilon g}\big).
\end{align*}
The condition $f\ell=\square$ implies that $f=f_1^2\ell_1$ for some $f_1\in\mathcal{M}$. Hence
\begin{align*}
M_{k,1}(\ell;N)=&\frac{q}{(q-1)|\ell_1|^{1/2}}\sum_{2d(f)\leq N-d(\ell_1)}\frac{\tau_k(f^2\ell_1)}{|f|}\prod_{P|f\ell_1\ell_2}\bigg(1+\frac{1}{|P|}\bigg)^{-1}+O_\varepsilon\big(q^{(k-4)g/2+\varepsilon g}\big).
\end{align*}

We are going to use an analogue of the Perron formula in the following form
\[
\sum_{2n\leq N}a(n)=\frac{1}{2\pi i}\int_{|u|=r}\Big(\sum_{n=0}^{\infty}a(n)u^{2n}\Big)\frac{du}{u^{N+1}(1-u)},
\]
provided that the power series $\sum_{n=0}^{\infty}a(n)u^n$ is absolutely convergent in $|u|\leq r<1$. Hence
\begin{align*}
M_{k,1}(\ell;N)=\frac{q}{(q-1)|\ell_1|^{1/2}}\frac{1}{2\pi i}\oint_{|u|=r}\frac{\mathcal{F}_k(u)du}{u^{N-d(\ell_1)+1}(1-u)}+O_\varepsilon\big(q^{(k-4)g/2+\varepsilon g}\big)
\end{align*}
for any $r<1$, where
\begin{align*}
\mathcal{F}_k(u)&=\sum_{f\in\mathcal{M}}\frac{\tau_k(f^2\ell_1)}{|f|}\prod_{P|f\ell_1\ell_2}\bigg(1+\frac{1}{|P|}\bigg)^{-1}u^{2d(f)}.
\end{align*}
Now by multiplicativity we have
\[
\mathcal{F}_k(u)=\eta_k(\ell;u)\mathcal{Z}\Big(\frac{u^2}{q}\Big)^{k(k+1)/2},
\]
where
\begin{equation}\label{eta}
\eta_k(\ell;u)=\prod_{P\in\mathcal{P}}\mathcal{A}_{k,P}(u)\prod_{P|\ell_1}\mathcal{B}_{k,P}(u)\prod_{\substack{P\nmid\ell_1\\P|\ell_2}}\mathcal{C}_{k,P}(u)
\end{equation}
with
\begin{align*}
&\mathcal{A}_{k,P}(u)=\bigg(1-\frac{u^{2d(P)}}{|P|}\bigg)^{k(k+1)/2}\bigg(1+\bigg(1+\frac{1}{|P|}\bigg)^{-1}\sum_{j=1}^{\infty}\frac{\tau_{k}(P^{2j})}{|P|^j}u^{2jd(P)}\bigg),\\
&\mathcal{B}_{k,P}(u)=\bigg(\sum_{j=0}^{\infty}\frac{\tau_k(P^{2j+1})}{|P|^j}u^{2jd(P)}\bigg)\bigg(1+\frac{1}{|P|}+\sum_{j=1}^{\infty}\frac{\tau_{k}(P^{2j})}{|P|^j}u^{2jd(P)}\bigg)^{-1}
\end{align*} 
and
\begin{align*}
\mathcal{C}_{k,P}(u)=\bigg(\sum_{j=0}^{\infty}\frac{\tau_k(P^{2j})}{|P|^j}u^{2jd(P)}\bigg)\bigg(1+\frac{1}{|P|}+\sum_{j=1}^{\infty}\frac{\tau_{k}(P^{2j})}{|P|^j}u^{2jd(P)}\bigg)^{-1}.
\end{align*}
Thus,
\begin{align*}
M_{k,1}(\ell;N)=&\frac{q}{(q-1)|\ell_1|^{1/2}}\frac{1}{2\pi i}\oint_{|u|=r}\\
&\qquad\qquad\frac{\eta_k(\ell;u)du}{u^{N-d(\ell_1)+1}(1-u)^{k(k+1)/2+1}(1+u)^{k(k+1)/2}}+O_\varepsilon\big(q^{(k-4)g/2+\varepsilon g}\big).
\end{align*}

Similarly, 
\begin{align*}
M_{k,2}(\ell;N)=&\frac{1}{q(q-1)|\ell_1|^{1/2}}\frac{1}{2\pi i}\oint_{|u|=r}\\
&\qquad\qquad\frac{\eta_k(\ell;u)du}{u^{N-d(\ell_1)+1}(1-u)^{k(k+1)/2+1}(1+u)^{k(k+1)/2}}+O_\varepsilon\big(q^{(k-4)g/2+\varepsilon g}\big),
\end{align*}
and hence we obtain that
\begin{align*}
M_{k}(\ell)=&\frac{1}{|\ell_1|^{1/2}}\frac{1}{2\pi i}\oint_{|u|=r}\frac{\eta_k(\ell;u)du}{u^{kg-d(\ell_1)+1}(1-u)^{k(k+1)/2+1}(1+u)^{k(k+1)/2-1}}+O_\varepsilon\big(q^{(k-4)g/2+\varepsilon g}\big).
\end{align*}

As discussed in [\textbf{\ref{F23}}, \textbf{\ref{F1}}], $\eta_k(\ell;u)$ has an analytic continuation to the region $|u|\leq R_k=q^{\vartheta_k}$ for $1\leq k\leq 3$, where $\vartheta_1=1-\varepsilon$, $\vartheta_2=1/2-\varepsilon$ and $\vartheta_3=1/3-\varepsilon$. We then move the contour of integration to $|u|=R_k$, encountering a pole of order $k(k+1)/2+1$ at $u=1$ and a pole of order $k(k+1)/2-1$ at $u=-1$. In doing so we get
\begin{align*}
M_{k}(\ell)=&\frac{1}{|\ell_1|^{1/2}}\frac{1}{2\pi i}\oint_{|u|=R_k}\frac{\eta_k(\ell;u)du}{u^{kg-d(\ell_1)+1}(1-u)^{k(k+1)/2+1}(1+u)^{k(k+1)/2-1}}\\
&\qquad\qquad-\frac{1}{|\ell_1|^{1/2}}\textrm{Res}(u=1) -\frac{1}{|\ell_1|^{1/2}}\textrm{Res}(u=-1)+O_\varepsilon\big(q^{(k-4)g/2+\varepsilon g}\big).
\end{align*}

We now evaluate the residue of the pole at $u=1$ and $u=-1$. We have
\begin{align*}
&\eta_k(\ell;u)=\eta_k(\ell;1)\sum_{j\geq0}\frac{1}{j!}\frac{\partial_u^j\eta_k}{\eta_k}(\ell;1)(u-1)^j,\\
&u^{-(kg-d(\ell_1)+1)}=1-\big(kg-d(\ell_1)+1\big)(u-1)+\ldots,\\
&\frac{1}{(1+u)^{k(k+1)/2-1}}=\frac{1}{2^{k(k+1)/2-1}}-\frac{k(k+1)/2-1}{2^{k(k+1)/2}}(u-1)\ldots.
\end{align*}
Similar expressions hold for the Taylor expansions around $u=-1$. So, using the fact that $\eta_k(\ell;u)$ is even,
\begin{align*}
\textrm{Res}(u=1)+\textrm{Res}&(u=-1)\\
&=\,- \frac{\eta_k(\ell;1)}{2^{k(k+1)/2-1}\big(k(k+1)/2\big)!}\sum_{j=0}^{k(k+1)/2}\frac{\partial_u^j\eta_k}{\eta_k}(\ell;1)P_{k,j}\big(kg-d(\ell_1)\big),
\end{align*}
where $P_{k,j}$'s are some explicit polynomials of degrees $k(k+1)/2-j$ for all $0\leq j\leq k(k+1)/2$, and the leading coefficient of $P_{k,0}$ is $1$.

We note that
\begin{align*}
&\eta_1(\ell;\pm1)=\mathcal{A}_1\prod_{P|\ell}\bigg(1+\frac{1}{|P|}-\frac{1}{|P|^2}\bigg)^{-1},\\
&\eta_2(\ell;\pm1)=\frac{\mathcal{A}_2\tau(\ell_1)|\ell_1|}{\sigma(\ell_1)}\prod_{P|\ell}\bigg(1+\frac{1}{|P|}\bigg)\bigg(1+\frac{2}{|P|}-\frac{2}{|P|^2}+\frac{1}{|P|^3}\bigg)^{-1},\\
&\eta_3(\ell;\pm1)=\mathcal{A}_3\prod_{P|\ell}\bigg(1+\frac{3}{|P|}\bigg)\bigg(1+\frac{4}{|P|}-\frac{3}{|P|^2}+\frac{3}{|P|^3}-\frac{1}{|P|^4}\bigg)^{-1}\prod_{P|\ell_1}\frac{1+3|P|}{3+|P|},
\end{align*}
where $\mathcal{A}_k$'s are as in Conjecture \ref{ak}, and $\sigma(\ell_1)= \sum_{d | \ell_1} |d| = \prod_{P | \ell_1} (1+|P|)$ is the sum of divisors function.
Moreover, differentiating \eqref{eta} $j$ times we see that
\[
\frac{\partial_u^j\eta_k}{\eta_k}(\ell;1)=\sum_{j_1,j_2=0}^{j}c_{j,j_1,j_2}\bigg(\sum_{P|\ell_1}\frac{D_{1,j,j_1}(P)d(P)^{j_1}}{|P|}\bigg)\bigg(\sum_{\substack{P\nmid\ell_1\\P|\ell_2}}\frac{D_{2,j,j_2}(P)d(P)^{j_2}}{|P|}\bigg)
\]
for some absolute constants $c_{j,j_1,j_2}$ and $D_{1,j,j_1}(P)\ll_{j,j_1}1$, $D_{2,j,j_2}(P)\ll_{j,j_2}1$. Hence,
\[
\frac{\partial_u^j\eta_k}{\eta_k}(\ell;1)\ll_{j,\varepsilon} d(\ell)^\varepsilon,
\]
and in particular we have
\[
M_{k}(\ell)=\frac{\eta_k(\ell;1)}{2^{k(k+1)/2-1}\big(k(k+1)/2\big)!|\ell_1|^{1/2}}\big(kg-d(\ell_1)\big)^{k(k+1)/2} +O_\varepsilon\big(g^{k(k+1)/2-1}d(\ell)^\varepsilon\big).
\]

For future purposes (see Section \ref{Vsquare}), we explicitly write down the main term for $k=1$:
\begin{equation}
\label{mainfirst}
M_{1}(\ell)=\frac{1}{|\ell_1|^{1/2}}\frac{1}{2\pi i}\oint_{|u|=r}\frac{\eta_1(\ell;u)du}{u^{g-d(\ell_1)+1}(1-u)^{2}}+O_\varepsilon\big(q^{-3g/2+\varepsilon g}\big)
\end{equation}
with
$$ \eta_1(\ell;u) = \prod_{P \in \mathcal{P}} \bigg(1- \frac{u^{2d(P)}}{|P|(1+|P|)} \bigg) \prod_{P | \ell}\bigg(1+\frac{1}{|P|}-\frac{u^{2d(P)}}{|P|^2}\bigg)^{-1}.$$

\subsection{Evaluate $S_{k}^{\textrm{e}}(\ell;V=\square)$}\label{Vsquare}

We proceed similarly as in [\textbf{\ref{F1}}] and [\textbf{\ref{F23}}]. First we note that as in \eqref{Cextend} we can extend the sum over $C$ in \eqref{Ske} to infinity, at the expense of an error of size $O_\varepsilon(q^{(k-4)g/2+\varepsilon g})$. So
\begin{align*}
&S_{k}^{\textrm{e}}(\ell;N;V=\square)=\frac{q}{(q-1)|\ell|}\sum_{\substack{f\in\mathcal{M}_{\leq N}\\d(f\ell)\ \textrm{even}}}\frac{\tau_k(f)}{|f|^{3/2}}\sum_{C|(f\ell)^\infty}\frac{1}{|C|^2}\\
&\qquad\qquad\qquad\bigg(q\sum_{V\in\mathcal{M}_{\leq d(f\ell)/2-g-2+d(C)}}G(V^2,\chi_{f\ell})-2\sum_{V\in\mathcal{M}_{\leq d(f\ell)/2-g-1+d(C)}}G(V^2,\chi_{f\ell})\\
&\qquad\qquad\qquad\qquad\qquad\qquad+\frac1q\sum_{V\in\mathcal{M}_{\leq d(f\ell)/2-g+d(C)}}G(V^2,\chi_{f\ell})\bigg)+O_\varepsilon\big(q^{(k-4)g/2+\varepsilon g}\big).
\end{align*}
 Applying the Perron formula in the form
\[
\sum_{n\leq N}a(n)=\frac{1}{2\pi i}\int_{|u|=r}\Big(\sum_{n=0}^{\infty}a(n)u^{n}\Big)\frac{du}{u^{N+1}(1-u)}
\] for the sums over $V$ we get
\begin{align*}
S_{k}^{\textrm{e}}(\ell;N;V=\square)& = \frac{1}{(q-1)|\ell|} \sum_{\substack{f \in \mathcal{M}_{\leq N} \\ d(f \ell) \text{ even}}} \frac{\tau_k(f) }{|f|^{3/2}} \sum_{C | (f\ell)^{\infty}}\frac{1}{|C|^2} \frac{1}{2 \pi i} \oint_{|u|=r_1}u^{-d(f)/2-d(C)}\\
& \qquad\qquad \Big( \sum_{V \in \mathcal{M}} G(V^2,\chi_{f \ell})\, u^{d(V)}\Big) \frac{(1-qu)^2du}{ u^{d(\ell)/2-g+1}(1-u) }+O_\varepsilon\big(q^{(k-4)g/2+\varepsilon g}\big),
\end{align*}
where $r_1=q^{-1-\varepsilon}$. Another application of the Perron formula, this time in the form
\[
\sum_{\substack{n\leq N\\n+l\ \textrm{even}}}a(n)=\frac{1}{2\pi i}\int_{|w|=r}\Big(\sum_{n=0}^{\infty}a(n)w^{n}\Big)\delta(l,N;w)\frac{dw}{w^{N+1}},
\] 
where
\[
\delta(l,N;w)=\frac12\bigg(\frac{1}{1-w}+\frac{(-1)^{N-l}}{1+w}\bigg)=\begin{cases}
\frac{1}{1-w^2}&N-l\ \textrm{even},\\
\frac{w}{1-w^2}&N-l\ \textrm{odd},
\end{cases}
\]for the sum over $f$ yields
\begin{align*}
&S_{k}^{\textrm{e}}(\ell;N;V=\square) = \frac{1}{(q-1)|\ell|} \\
&\qquad\quad  \frac{1}{(2 \pi i)^2} \oint_{|u|=r_1}  \oint_{|w|=r_2}\frac{\mathcal{N}_k (\ell;u,w) (1-qu)^2 dwdu }{u^{(N+d(\ell)-\mathfrak{a})/2-g+1}w^{N+1-\mathfrak{a}}(1-u) (1-uw^2)}+O_\varepsilon\big(q^{(k-4)g/2+\varepsilon g}\big),
\end{align*} where $r_2<1$,
$$ \mathcal{N}_k (\ell;u,w) = \sum_{f,V \in \mathcal{M}} \frac{\tau_k(f) G(V^2,\chi_{f \ell})}{|f|^{3/2}} \prod_{P | f \ell} \bigg(1- \frac{1}{|P|^2u^{d(P)}} \bigg)^{-1} u^{d(V)} w^{d(f)} $$
and $\mathfrak{a} \in \{0,1\}$ according to whether $N-d(\ell)$ is even or odd. 

We next write $ \mathcal{N}_k (\ell;u,w)$ as an Euler product. From Lemma \ref{L2} we have
\begin{align*}
&\sum_{f \in \mathcal{M}}   \frac{\tau_k(f) G(V^2,\chi_{f\ell})}{|f|^{3/2}} \prod_{P|f \ell} \bigg(1-\frac{1}{|P|^2 u^{d(P)}} \bigg)^{-1}w^{d(f)} \\
&\quad = \prod_{P |\ell}  \bigg(1-\frac{1}{|P|^2 u^{d(P)}} \bigg)^{-1} \prod_{P \nmid  \ell V} \bigg( 1+\frac{k w^{d(P)}}{|P|}\bigg(1-\frac{1}{|P|^2 u^{d(P)}} \bigg)^{-1}\bigg) \\
& \qquad\ \ \prod_{\substack{P\nmid \ell \\P|V}} \bigg( 1+ \sum_{j=1}^{\infty} \frac{ \tau_k(P^j) G(V^2,\chi_{P^j}) w^{j d(P)}}{|P|^{3j/2} }\bigg(1-\frac{1}{|P|^2 u^{d(P)}} \bigg)^{-1} \bigg) \\
&\qquad\quad\ \prod_{\substack{ P | \ell\\P \nmid V }} G(V^2,\chi_{P^{\ord_P(\ell)}})\prod_{\substack{ P | \ell\\P | V }} \bigg( G(V^2,\chi_{P^{\ord_P(\ell)}}) + \sum_{j=1}^{\infty} \frac{\tau_k(P^j) G(V^2,\chi_{P^{j+\ord_P(\ell)}}) w^{jd(P)}}{|P|^{3j/2}} \bigg) .
\end{align*} Note that if $P| \ell_2$ and $P \nmid V$, then the above expression is $0$. Hence we must have that $\text{rad}(\ell_2) | V$. Moreover, from the last Euler factor above, note that we must have $\ell_2 | V$, so write $V= \ell_2 V_1$. 
Using Lemma \ref{L2}, we rewrite 
$$ \prod_{\substack{P | \ell \\ P \nmid V}} G(V^2,\chi_{P^{\ord_P(\ell)}}) =\prod_{\substack{P | \ell_1 \\ P \nmid \ell_2}} |P|^{1/2} \prod_{\substack{P | \ell_1 \\ P \nmid \ell_2 \\ P |V_1}} |P|^{-1/2}.$$
By multiplicativity we then obtain
\begin{align*}
& \mathcal{N}_k  (\ell;u,w)  =  u^{d(\ell_2)}\prod_{P\in\mathcal{P}} \bigg( 1- \frac{1}{|P|^2 u^{d(P)}} \bigg)^{-1}  \\
&\ \prod_{P \nmid \ell} \bigg(1-\frac{1}{|P|^2u^{d(P)}}+\frac{k w^{d(P)}}{|P|}\\
&\qquad\qquad\qquad\qquad\qquad+ \sum_{i=1}^{\infty} u^{i d(P)} \bigg( 1-\frac{1}{|P|^2u^{d(P)}}+ \sum_{j=1}^{\infty} \frac{\tau_k(P^j) G(P^{2i},\chi_{P^j}) w^{j d(P)}}{|P|^{3j/2}}\bigg) \bigg) \\
&\ \ \prod_{P | \ell_1} \Bigg(|P|^{1/2+2 \ord_P(\ell_2)}+\sum_{i=1}^{\infty}u^{i d(P)} \sum_{j=0}^{\infty} \frac{\tau_k(P^j) G( P^{2i+ 2 \ord_P(\ell_2)} , \chi_{P^{j+1+2 \ord_P(\ell_2)}}) w^{jd(P)}}{|P|^{3j/2}} \Bigg) \\
&\ \ \ \prod_{\substack{P \nmid \ell_1   \\ P | \ell_2 }} \bigg( \varphi\big(P^{2 \ord_P(\ell_2)}\big) + \frac{k  |P|^{2 \ord_P(\ell_2)} w^{d(P)}}{|P|}\\
&\qquad\qquad\qquad\qquad\qquad\qquad+ \sum_{i=1}^{\infty} u^{i d(P)} \sum_{j=0}^{\infty} \frac{ \tau_k(P^j) G( P^{2i+ 2 \ord_P(\ell_2)} , \chi_{P^{j+2 \ord_P(\ell_2)}}) w^{jd(P)}}{|P|^{3j/2}}  \bigg).
\end{align*}

\subsubsection{The case $k=1$}
We have
\begin{align*}
\mathcal{N}_1(\ell;u,w) = \frac{|\ell|u^{d(\ell_2)}}{| \ell_1 |^{1/2}}\kappa_{1}(\ell;u,w) \mathcal{Z}(u) \mathcal{Z}\Big(\frac wq\Big)  \mathcal{Z}\Big(\frac {uw^2}{q}\Big),
\end{align*} where
\begin{equation*}
\kappa_1(\ell;u,w)=\prod_{P\in\mathcal{P}} \mathcal{D}_{1,P}(u,w)\prod_{P | \ell_1} \mathcal{H}_{1,P}(u,w)\prod_{\substack{P \nmid \ell_1 \\ P | \ell_2}} \mathcal{J}_{1,P}(u,w)
\end{equation*}
with
\begin{align*}
\mathcal{D}_{1,P}(u,w)=& \bigg(u^{d(P)}-\frac{1}{|P|^2}\bigg)^{-1}\bigg(1-\frac{w^{d(P)}}{|P|}\bigg)\\
&\qquad\qquad \bigg(u^{d(P)}+\frac{(uw)^{d(P)}(1-u^{d(P)})}{|P|}-\frac{1+(uw)^{2d(P)}}{|P|^2}+\frac{(uw^2)^{d(P)}}{|P|^3}\bigg),
\end{align*}
\begin{align*}
 \mathcal{H}_{1,P}(u,w)=& u^{d(P)}\bigg(1-u^{d(P)}+(uw)^{d(P)}-\frac{(uw)^{d(P)}}{|P|}\bigg)\\
&\qquad\qquad \bigg(u^{d(P)}+\frac{(uw)^{d(P)}(1-u^{d(P)})}{|P|}-\frac{1+(uw)^{2d(P)}}{|P|^2}+\frac{(uw^2)^{d(P)}}{|P|^3}\bigg)^{-1}
\end{align*} and
\begin{align*}
 \mathcal{J}_{1,P}(u,w)=& u^{d(P)}\bigg(1-\frac{1-w^{d(P)}+(uw)^{d(P)}}{|P|}\bigg)\\
&\qquad\qquad \bigg(u^{d(P)}+\frac{(uw)^{d(P)}(1-u^{d(P)})}{|P|}-\frac{1+(uw)^{2d(P)}}{|P|^2}+\frac{(uw^2)^{d(P)}}{|P|^3}\bigg)^{-1}.
\end{align*}
So
\begin{align*}
S_{1}^{\textrm{e}}(\ell;N;V&=\square) =\ \frac{1}{(q-1)|\ell_1|^{1/2}} \frac{1}{(2 \pi i)^2} \oint_{|u|=r_1}  \oint_{|w|=r_2}\\
&\  \frac{ \kappa_1(\ell;u,w)(1-qu) dwdu }{u^{(N+d(\ell_1)-\mathfrak{a})/2-g+1}w^{N+1-\mathfrak{a}}(1-u)(1-w) (1-uw^2)^2}+O_\varepsilon\big(q^{-3g/2+\varepsilon g}\big).
\end{align*}
Note that $ \kappa_1(1;u,w)$ is the same as $\prod_{P}\mathcal{B}_P(u,w/q)$ in [\textbf{\ref{F1}}]. 

Similarly as in [\textbf{\ref{F1}}], we take $r_1 = q^{-3/2}$ and $r_2<1$ in the double integral above. Recall from Lemma $6.3$ in [\textbf{\ref{F1}}] that 
$$ \kappa_1(1;u,w) = \mathcal{Z} \Big(  
\frac{w}{q^3u} \Big) \mathcal{Z} \Big( \frac{w^2}{q^2} \Big)^{-1}\prod_{P\in\mathcal{P}} \mathcal{R}_P(u,w),$$ where
\begin{align*}
\mathcal{R}_{P}(u,w)=& 1-\bigg(u^{d(P)}-\frac{1}{|P|^2}\bigg)^{-1}\bigg(1+\frac{w^{d(P)}}{|P|}\bigg)^{-1}\frac{w^{d(P)}}{|P|u^{d(P)}}\bigg(u^{3d(P)} + \frac{(u^3w)^{d(P)}}{|P|}\\
&\qquad\qquad - \frac{ (u^2w)^{d(P)}}{|P|^2} +\frac{(uw)^{d(P)}(1-u^{d(P)})}{|P|^3} - \frac{1+(uw)^{2d(P)}}{|P|^4} + \frac{(uw^2)^{d(P)}}{|P|^5} \bigg),
\end{align*}
and $\prod_{P\in\mathcal{P}} \mathcal{R}_P(u,w)$ converges absolutely for $|w|^2<q^3 |u|, |w| < q^4 |u|^2 , |w|<q$ and $|wu| <1$.
In the double integral, we enlarge the contour of integration over $w$ to $|w| = q^{3/4-\varepsilon}$ and encounter two poles at $w=1$ and $w=q^2u$. Let $A(\ell;N)$ be the residue of the pole at $w=1$ and $B(\ell;N)$ be the residue of the pole at $w=q^2u$. By bounding the integral on the new contour, we can write $$S_{1}^{\textrm{e}}(\ell;N;V=\square) = A(\ell;N)+B(\ell;N)+O_\varepsilon\big(|\ell_1|^{1/4}q^{-3g/2+\varepsilon g }\big).$$ 

For the residue at $w=1$ we have
$$A(\ell;N) = \frac{1}{(q-1)|\ell_1|^{1/2}} \frac{1}{2 \pi i} \oint_{|u|=r_1} \frac{\kappa_1(\ell;u,1) (1-qu)du}{u^{(N+d(\ell_1)-\mathfrak{a})/2-g+1} (1-u)^3} .$$
We make the change of variables $u \mapsto 1/u^2$ and use the fact that
$$ 1- \frac{q}{u^2} = - \frac{q}{u^2 \mathcal{Z} ( \frac{u^2}{q^2} )}\qquad \text{  and  }\qquad \Big(1-\frac{1}{u^2} \Big)^{-1} = \mathcal{Z}\Big( \frac{1}{qu^2} \Big).$$
A direct computation with the  Euler product shows that
\begin{align*} \zeta_q(2) \kappa_1\Big( \ell;\frac{1}{u^2},1\Big) \mathcal{Z} \Big(  \frac{1}{qu^2} \Big)  \mathcal{Z} \Big( \frac{u^2}{q^2} \Big)^{-1}  &= \prod_{P \in \mathcal{P}} \bigg(1- \frac{u^{2d(P)}}{|P|(1+|P|)} \bigg) \prod_{P | \ell}\bigg(1+\frac{1}{|P|}-\frac{u^{2d(P)}}{|P|^2}\bigg)^{-1}\\
& = \eta_1(\ell;u).\end{align*}
So after the change of variables, we have
$$A(\ell;N) = -\frac{1}{|\ell_1|^{1/2}} \frac{1}{2 \pi i} \oint_{|u| = r_1^{-1/2}} \frac{  \eta_1(\ell;u) du}{u^{2g-N-d(\ell_1)+\mathfrak{a}-1} (1-u^2)^2},$$
and hence
$$A(\ell):=A(\ell;g-1)+A(\ell;g) = -\frac{1}{|\ell_1|^{1/2}} \frac{1}{2 \pi i} \oint_{|u| = r_1^{-1/2}} \frac{  \eta_1(\ell;u)\big(u^{\mathfrak{a}(g)}+ u^{2-\mathfrak{a}(g)}\big)du}{u^{g-d(\ell_1)+1} (1-u^2)^2}.$$

Consider the integral
\[
\frac{1}{2\pi i}\oint_{|u| = r_1^{-1/2}}\frac{\eta_1(\ell;u)du}{u^{g-d(\ell_1)+1}(1-u)^{2}}.
\]
Making the change of variables $u \mapsto -u$ and using the facts that $\eta_1(\ell;u)$ is an even function and that $(-1)^{g-d(\ell_1)}=(-1)^{\mathfrak{a}(g)}$ (which follows from the definition of $\mathfrak{a}(g)$), this is equal to
\[
\frac{1}{2\pi i}\oint_{|u| = r_1^{-1/2}}\frac{(-1)^{\mathfrak{a}(g)}\eta_1(\ell;u)du}{u^{g-d(\ell_1)+1}(1+u)^{2}}.
\]
Hence we get
\begin{align*}
\frac{2}{2\pi i}\oint_{|u| = r_1^{-1/2}}\frac{\eta_1(\ell;u)du}{u^{g-d(\ell_1)+1}(1-u)^{2}}=\frac{1}{2\pi i}\oint_{|u| = r_1^{-1/2}}\frac{\eta_1(\ell;u)\big((1+u)^2+(-1)^{\mathfrak{a}(g)}(1-u)^2\big)du}{u^{g-d(\ell_1)+1}(1-u^2)^{2}}.
\end{align*}
Note that $(1+u)^2+(-1)^{\mathfrak{a}(g)}(1-u)^2=2\big(u^{\mathfrak{a}(g)}+ u^{2-\mathfrak{a}(g)}\big)$, and so
\[
A(\ell)=-\frac{1}{|\ell_1|^{1/2}} \frac{1}{2 \pi i}\oint_{|u| = r_1^{-1/2}}\frac{\eta_1(\ell;u)du}{u^{g-d(\ell_1)+1}(1-u)^{2}}.
\]

Now recall the expression \eqref{mainfirst} for the main term $M_1(\ell)$. Since the integrand above has no poles other than at $u=1$ between the circles of radius $r$ and $r_1^{-1/2}$ (recall that $r<1$ and $r_1^{-1/2}= q^{3/4})$, it follows that
$$A(\ell)+M_1(\ell) = - \frac{1}{| \ell_1|^{1/2}} \text{Res} (u=1)+ O_{\varepsilon}(q^{-3g/2+\varepsilon g}).$$
Note that the residue computation was done in Section \ref{main}.

Next we compute the residue at $w=q^2u$. We have
\begin{align*}
B(\ell;N) = \frac{1}{(q-1)| \ell_1|^{1/2}} \frac{1}{2 \pi i} \oint_{|u|=r_1} & \frac{(1-qu) (1-q^3u^2) }{u^{(N+d(\ell_1)-\mathfrak{a})/2-g+1}(q^2u)^{N-\mathfrak{a}}(1-u) (1-q^2u) (1-q^4u^3)^2}\\
& \prod_{P\in\mathcal{P}} \mathcal{R}_P(u,q^2u)  \prod_{P | \ell_1} \mathcal{H}_{1,P}(u,q^2u) \prod_{\substack{P \nmid \ell_1 \\ P | \ell_2}} \mathcal{J}_{1,P}(u,q^2u) \, du.
\end{align*}
We shift the contour of integration to $|u| =q^{-1-\varepsilon}$ and encounter a double pole at $u=q^{-4/3}.$ The integral over the new contour will be bounded by $q^{-3g/2+\varepsilon g}$, and after computing the residue at $u= q^{-4/3}$, it follows that
$$B(\ell;g)+B(\ell;g-1) =|\ell_1|^{1/6} q^{-4g/3}  P\big(g+d(\ell_1)\big) + O \big( q^{-3g/2 + \varepsilon g} \big),$$ where $P(x)$ is a linear polynomial whose coefficients can be written down explicitly.

\subsubsection{The case $k=2$}
We have
\begin{align*}
\mathcal{N}_2(\ell;u,w) &= \frac{|\ell|u^{d(\ell_2)}}{| \ell_1 |^{1/2}}\kappa_{2}(\ell;u,w)  \mathcal{Z}(u)\mathcal{Z}\Big(\frac wq\Big)^2 \mathcal{Z}\Big(\frac{uw^2}{q}\Big) \mathcal{Z} \Big( \frac{1}{q^2u} \Big) ,
\end{align*}
 where
 \begin{equation}\label{kappa2}
\kappa_2(\ell;u,w)=\prod_{P\in\mathcal{P}} \mathcal{D}_{2,P}(u,w)\prod_{P | \ell_1} \mathcal{H}_{2,P}(u,w)\prod_{\substack{P \nmid \ell_1 \\ P | \ell_2}} \mathcal{J}_{2,P}(u,w)
\end{equation}
with
\begin{align*}
 \mathcal{D}_{2,P}(u,w) =&\bigg( 1-\frac{ w^{d(P)}}{|P|}\bigg)^2\bigg(1-\frac{(uw^2)^{d(P)}}{|P|}\bigg)^{-1}\bigg(1 +\frac{w^{d(P)}\big(2-2u^{d(P)}+(uw)^{d(P)}\big)}{|P|}\\
&\qquad\quad -\frac{\big(u^{-d(P)}+3(uw^2)^{d(P)}\big)}{|P|^2}+\frac{w^{ 2d(P)}\big(2+(uw)^{2d(P)}\big)}{|P|^3}-\frac{(uw^4)^{d(P)}}{|P|^4}\bigg),
 \end{align*} 
\begin{align*}
& \mathcal{H}_{2,P}(u,w) =  \bigg(1-u^{d(P)}+2(uw)^{d(P)}-\frac{(uw)^{d(P)}\big(2-w^{d(P)}+(uw)^{d(P)}}{|P|}\bigg)\\
&\qquad\qquad \bigg(1 +\frac{w^{d(P)}\big(2-2u^{d(P)}+(uw)^{d(P)}\big)}{|P|}\\
&\qquad\qquad\qquad\qquad-\frac{\big(u^{-d(P)}+3(uw^2)^{d(P)}\big)}{|P|^2} +\frac{w^{ 2d(P)}\big(2+(uw)^{2d(P)}\big)}{|P|^3}-\frac{(uw^4)^{d(P)}}{|P|^4}\bigg)^{-1}
\end{align*}
and
\begin{align*}
& \mathcal{J}_{2,P}(u,w) = \Big(1- \frac{1-2w^{d(P)}+2(uw)^{d(P)}- (uw^2)^{d(P)}}{|P|}-\frac{(uw^2)^{d(P)}}{|P|^2}\Big)\\
&\qquad\qquad \bigg(1 +\frac{w^{d(P)}\big(2-2u^{d(P)}+(uw)^{d(P)}\big)}{|P|}\\
&\qquad\qquad\qquad\qquad -\frac{\big(u^{-d(P)}+3(uw^2)^{d(P)}\big)}{|P|^2}+\frac{w^{ 2d(P)}\big(2+(uw)^{2d(P)}\big)}{|P|^3}-\frac{(uw^4)^{d(P)}}{|P|^4}\bigg)^{-1}.\end{align*}
Hence
\begin{align}\label{S2integral}
S_{2}^{\textrm{e}}(\ell;&N;V=\square) =\ - \frac{q}{(q-1)|\ell_1|^{1/2}} \frac{1}{(2 \pi i)^2} \oint_{|u|=r_1}  \oint_{|w|=r_2}\nonumber  \\
&\qquad \frac{\kappa_2(\ell;u,w) dwdu  }{u^{(N+d(\ell_1)-\mathfrak{a})/2-g}w^{N+1-\mathfrak{a}}(1-u)(1-w)^2 (1-uw^2)^2}+O_\varepsilon\big(q^{-g+\varepsilon g}\big).
\end{align}
Note that $\kappa_2(1;u,w) $ is the same as $\mathcal{F}(u,w/q)$ in Lemma 4.3 of [\textbf{\ref{F23}}], and, hence, $\kappa_2(\ell;u,w) $ is absolutely convergent for $|u| > 1/q$, $|w| < 
q^{1/2}$, $|uw| < 1$ and $|uw^2| < 1$.

We first shift the contour $|u|=r_1$ to $|u|=r_1'=q^{-1+\varepsilon}$, and then the contour $|w|=r_2$ to $|w|=r_2'=q^{1/2-\varepsilon}$ in the expression \eqref{S2integral}. In doing so, we encounter a double pole at $w=1$. Moreover, the new integral is bounded by $O_\varepsilon(q^{-g+\varepsilon g})$. Hence
\begin{align*}
S_{2}^{\textrm{e}}(\ell,N;V=\square) =\ & \frac{q}{(q-1) |\ell_1|^{\frac{1}{2}}} \frac{1}{2 \pi i} \oint_{|u| = r_1'}\bigg( \frac{ \partial_ w \mathcal{\kappa}_2}{\kappa_2}(\ell;u,1) + \frac{5u-1}{1-u}-N+\mathfrak{a} \bigg)\\
&\qquad\qquad \frac{\kappa_2(\ell;u,1) du  }{u^{(N+d(\ell_1)-\mathfrak{a})/2-g}(1-u)^3}  + O_\varepsilon(q^{-g+\varepsilon g}),
\end{align*}
and so letting $N=2g$ and $N=2g-1$ we obtain
\begin{align}\label{S2small}
S_{2}^{\textrm{e}}(\ell;V=\square) =\ & \frac{q}{(q-1) |\ell_1|^{\frac{1}{2}}} \frac{1}{2 \pi i} \oint_{|u| = r_1'}\bigg( \frac{ \partial_ w \mathcal{\kappa}_2}{\kappa_2}(\ell;u,1)  -2g+ \frac{5u-1}{1-u}+\frac{2u}{u+u^{\mathfrak{a}(\ell)}} \bigg)\nonumber\\
&\qquad\qquad \frac{\kappa_2(\ell;u,1)(u+u^{\mathfrak{a}(\ell)}) du  }{u^{(d(\ell_1)+\mathfrak{a}(\ell))/2}(1-u)^3}  + O_\varepsilon(q^{-g+\varepsilon g}),
\end{align}
where $\mathfrak{a}(\ell) \in \{0,1\}$ according to whether $d(\ell)$ is even or odd.

It is a straightforward exercise to verify that
\begin{equation}\label{D2u}
\mathcal{D}_{2,P} (u,1)=\bigg(1-\frac{1}{|P|}\bigg)^2\bigg(1+\frac{2}{|P|}-\frac{u^{-d(P)}+u^{d(P)}}{|P|^2}+\frac{1}{|P|^3}\bigg)=\mathcal{D}_{2,P}\Big (\frac 1u,1\Big),
\end{equation}
$$ \mathcal{H}_{2,P} \Big( u, 1 \Big) =\big(1+u^{d(P)}\big)\bigg(1+\frac{2}{|P|}-\frac{u^{-d(P)}+u^{d(P)}}{|P|^2}+\frac{1}{|P|^3}\bigg)^{-1}  = u^{d(P)}\mathcal{H}_{2,P} \Big( \frac{1}{u}, 1 \Big),$$
$$ \mathcal{J}_{2,P} \Big(u, 1 \Big) =\bigg(1+\frac{1}{|P|}\bigg)\bigg(1+\frac{2}{|P|}-\frac{u^{-d(P)}+u^{d(P)}}{|P|^2}+\frac{1}{|P|^3}\bigg)^{-1}= \mathcal{J}_{2,P} \Big( \frac{1}{u},1\Big) , $$
and hence
\begin{equation}
\kappa_2(\ell;u,1)=u^{d(\ell_1)}\kappa_2\Big(\ell;\frac 1u,1\Big). \label{simf2}
\end{equation}
Let
\[
 \alpha_P(u)=\frac{\partial_ w \mathcal{D}_{2,P}}{\mathcal{D}_{2,P}}(u,1),\qquad\beta_P(u) =   \frac{ \partial_ w \mathcal{H}_{2,P}}{ \mathcal{H}_{2,P}}(u,1)\qquad\textrm{and}\qquad
\gamma_P(u) =   \frac{\partial_ w \mathcal{J}_{2,P}}{ \mathcal{J}_{2,P}}(u,1).\]

By direct computation we obtain
\begin{align*}
&\alpha_P(u)=\ 2 d(P)\bigg(1-\frac{1}{|P|}\bigg)^{-1}\bigg(1-\frac{u^{d(P)}}{|P|}\bigg)^{-1}\bigg(1+\frac{2}{|P|}-\frac{u^{-d(P)}+u^{d(P)}}{|P|^2}+\frac{1}{|P|^3}\bigg)^{-1}\\
&\qquad\bigg(\frac{u^{d(P)}}{|P|}-\frac{3+u^{d(P)}}{|P|^2}+\frac{u^{-d(P)}+1+4u^{d(P)}+u^{2d(P)}}{|P|^3}-\frac{3+u^{d(P)}+2u^{2d(P)}}{|P|^4}+\frac{2u^{d(P)}}{|P|^5}\bigg),
\end{align*}
\begin{align*}
\beta_P(u)=\ &2d(P)\big(1+u^{d(P)}\big)^{-2} \bigg(u^{d(P)}-\frac{1-u^{d(P)}}{|P|}+\frac{u^{2 d(P)}+2 u^{d(P)}-1}{|P|^2}-\frac{u^{d(P)}+2}{|P|^3}\bigg)
\end{align*}
and
\[
\gamma_P(u)=2d(P)\bigg(1+\frac{1}{|P|}\bigg)^{-2}\bigg(u^{d(P)}-\frac{1}{|P|}\bigg)\bigg(\frac{u^{-d(P)}+2}{|P|^2}+\frac{1}{|P|^3}\bigg) .
\]
From Lemma 4.4 of [\textbf{\ref{F23}}] we have
\begin{equation*}
\sum_{P\in\mathcal{P}}\alpha_P(u)=\sum_{P\in\mathcal{P}}\alpha_P\Big(\frac 1u\Big)+4u\sum_{P\in\mathcal{P}}\frac{\partial_u \mathcal{D}_{2,P}}{\mathcal{D}_{2,P}}(u,1)+\frac{2(1+u)}{1-u}.
\end{equation*}
Also note that
$$\beta_P(u)-\beta_P\Big(\frac 1u\Big) = 4 u \frac{\partial_u \mathcal{H}_{2,P} }{\mathcal{H}_{2,P}}(u, 1)-2d(P)$$
and
$$\gamma_P(u)-\gamma_P\Big(\frac1u\Big) = 4 u \frac{\partial_u \mathcal{J}_{2,P}}{\mathcal{J}_{2,P}}(u, 1).$$
Combining the above equations we get that
\begin{equation}
 \frac{ \partial_ w \mathcal{\kappa}_2}{\kappa_2}(\ell;u,1)  =  \frac{ \partial_ w \mathcal{\kappa}_2}{\kappa_2}\Big(\ell;\frac1u,1\Big)  + 4u \frac{ \partial_u \kappa_2}{\kappa_2} (\ell;u,1)+\frac{2(1+u)}{1-u}- 2 d(\ell_1).
\label{simbeta}
\end{equation}

We remark from \eqref{D2u} that $\kappa_2 (\ell;u,1)$ is analytic for $1/q<|u|<q$. Making a change of variables $u \mapsto 1/u$ in the integral \eqref{S2small} and using equations \eqref{simf2} and \eqref{simbeta} we get
\begin{align*}
&S_{2}^{\textrm{e}}(\ell;V=\square)= - \frac{q}{(q-1) |\ell_1|^{\frac{1}{2}}} \frac{1}{2 \pi i} \oint_{|u| = q^{1-\varepsilon}}\frac{\kappa_2(\ell;u,1)(u+u^{\mathfrak{a}(\ell)}) }{u^{(d(\ell_1)+\mathfrak{a}(\ell))/2}  (1-u)^3}\\
&\quad \bigg(\frac{ \partial_ w \kappa_2}{\kappa_2}(\ell;u,1)    -4u \frac{ \partial_u \kappa_2}{\kappa_2} (\ell;u,1)+ 2 d( \ell_1)-2g- \frac{7+u}{1-u}+\frac{2u^{\mathfrak{a}(\ell)}}{u+u^{\mathfrak{a}(\ell)}} \bigg) du+O_\varepsilon(q^{-g+ \varepsilon g}).
\end{align*}
We further use the fact that
\begin{align*}
&\frac{1}{2 \pi i}  \oint_{|u| = q^{1-\varepsilon}}\frac{ \partial_u \kappa_2(\ell;u,1)(u+u^{\mathfrak{a}(\ell)})du }{u^{(d(\ell_1)+\mathfrak{a}(\ell))/2-1}  (1-u)^3}\\
&\quad= - \frac{1}{2 \pi i}  \oint_{|u| = q^{1-\varepsilon}} \frac{\kappa_2(\ell;u,1)(u+u^{\mathfrak{a}(\ell)})}{u^{(d(\ell_1)+\mathfrak{a}(\ell))/2-1}(1-u)^3}\bigg(\frac{u}{u+u^{\mathfrak{a}(\ell)}} -\frac{d(\ell_1)}{2}+\frac{1+2u}{1-u}\bigg)\, du,
\end{align*}
as $\mathfrak{a}(\ell)(u^{\mathfrak{a}(\ell)}-u)=0$. Combining the two equations above, it follows that
\begin{align}\label{S2large}
S_{2}^{\textrm{e}}(\ell;V=\square)= & - \frac{q}{(q-1) |\ell_1|^{\frac{1}{2}}} \frac{1}{2 \pi i} \oint_{|u| = q^{1-\varepsilon}}\frac{\kappa_2(\ell;u,1)(u+u^{\mathfrak{a}(\ell)}) }{u^{(d(\ell_1)+\mathfrak{a}(\ell))/2}  (1-u)^3}\\
&\qquad\qquad \bigg( \frac{ \partial_ w \kappa_2}{\kappa_2}(\ell;u,1) -2g+ \frac{5u-1}{1-u}+\frac{2u}{u+u^{\mathfrak{a}(\ell)}}  \bigg) du+O_\varepsilon(q^{-g+ \varepsilon g}).\nonumber
\end{align}
As there is only one pole of the integrand at $u=1$ in the annulus between $|u|=q^{-1+\varepsilon}$ and $|u|=q^{1-\varepsilon}$, in view of \eqref{S2small} and \eqref{S2large} we conclude that
 \begin{equation*}
S_{2}^{\textrm{e}}(\ell;V=\square)  = - \frac{q }{2(q-1)|\ell_1|^{\frac{1}{2}}}\text{Res}(u=1) +O_\varepsilon(q^{-g+ \varepsilon g}).
\end{equation*}

To compute the residue at $u=1$, we proceed as in calculating the residue of $M_k(\ell)$ in the previous subsection. In doing so we have
\begin{align*}
\textrm{Res}(u=1)=&\ \kappa_2(\ell;1,1)\bigg(\frac{g}{2}\sum_{j=0}^{2}\frac{\partial_u^j\kappa_2}{\kappa_2}(\ell;1,1)Q_{2,j}\big(d(\ell_1)\big)\\
&\qquad\qquad\qquad\qquad\qquad\qquad-\frac{1}{6}\sum_{i=0}^{1}\sum_{j=0}^{3-i}\frac{\partial_u^j\partial_w^i\kappa_2}{\kappa_2}(\ell;1,1)R_{2,i,j}\big(d(\ell_1)\big)\bigg),
\end{align*}
where $Q_{2,j}(x)$'s and $R_{2,i,j}(x)$'s are explicit polynomials of degrees $2-j$ and $3-i-j$, respectively, and the leading coefficients of $Q_{2,0}(x)$ and $R_{2,0,0}(x)$ are $1$. We also note that
\[
\kappa_2(\ell;1,1)=\frac{\eta_2(\ell;1)}{\zeta_q(2)}=\frac{(q-1)\eta_2(\ell;1)}{q},
\]
and as before we have the estimates
\[
\frac{\partial_u^j\kappa_2}{\kappa_2}(\ell;1,1),\, \frac{\partial_u^j\partial_w\kappa_2}{\kappa_2}(\ell;1,1)\ll_{j,\varepsilon}d(\ell)^\varepsilon.
\]
Hence, in particular, we get
\[
S_{2}^{\textrm{e}}(\ell;V=\square)=-\frac{\eta_2(\ell;1)}{12|\ell_1|^{1/2}}\Big(3gd(\ell_1)^2-d(\ell_1)^3\Big)+O\big(gd(\ell_1)d(\ell)^\varepsilon\big).
\]

 \subsubsection{The case $k=3$}
 We have
 \begin{align*}
\mathcal{N}_3(\ell;z,w) =& \frac{|\ell| u^{d(\ell_2)}}{| \ell_1 |^{1/2}}\kappa_3(\ell;u,w)\mathcal{Z}(u) \mathcal{Z}\Big(\frac wq\Big)^3  \mathcal{Z}\Big(\frac{uw^2}{q}\Big)^6 \mathcal{Z} \Big(  \frac{1}{q^2u} \Big) \mathcal{Z}(uw/q)^{-3}  ,
\end{align*}
 where
 \begin{equation}\label{kappa3}
\kappa_3(\ell;u,w)=\prod_{P\in\mathcal{P}} \mathcal{D}_{3,P}(u,w)\prod_{P | \ell_1} \mathcal{H}_{3,P}(u,w)\prod_{\substack{P \nmid \ell_1 \\ P | \ell_2}} \mathcal{J}_{3,P}(u,w)
\end{equation}
with
\begin{align*}
&\mathcal{D}_{3,P}(u,w) =\bigg (1-\frac{w^{d(P)}}{|P|}\bigg)^3 \bigg(1\frac{(uw)^{d(P)}}{|P|}\bigg)^{-3}\bigg (1-\frac{(uw^2)^{d(P)}}{|P|}\bigg)^3 \\
&\qquad\bigg(1+\frac{3w^{d(P)}\big(1-u^{d(P)}+(uw)^{d(P)}\big)}{|P|}-\frac{u^{-d(P)}+(uw^2)^{d(P)}\big(6-w^{d(P)}+(uw)^{d(P)}\big)}{|P|^2}\\
&\qquad\qquad+\frac{3w^{2d(P)}\big(1+(uw)^{2d(P)}\big)}{|P|^3}-\frac{(uw^4)^{d(P)}\big(3+(uw)^{2d(P)}\big)}{|P|^4}+\frac{(uw^3)^{2d(P)}}{|P|^5}\bigg),
\end{align*}
$\mathcal{H}_{3,P} (u,w) =$
\begin{align*}
& \bigg(1-u^{d(P)}+3(uw)^{d(P)}-\frac{(uw)^{d(P)}\big(3-3w^{d(P)}+3(uw)^{d(P)}-(uw^2)^{d(P)}\big)}{|P|}-\frac{(u^2w^3)^{d(P)}}{|P|^2}\bigg)\\
&\qquad\bigg(1+\frac{3w^{d(P)}\big(1-u^{d(P)}+(uw)^{d(P)}\big)}{|P|}-\frac{u^{-d(P)}+(uw^2)^{d(P)}\big(6-w^{d(P)}+(uw)^{d(P)}\big)}{|P|^2}\\
&\qquad\qquad+\frac{3w^{2d(P)}\big(1+(uw)^{2d(P)}\big)}{|P|^3}-\frac{(uw^4)^{d(P)}\big(3+(uw)^{2d(P)}\big)}{|P|^4}+\frac{(uw^3)^{2d(P)}}{|P|^5}\bigg)^{-1}
\end{align*}
and $\mathcal{J}_{3,P} (u,w) =$
\begin{align*}
& \bigg(1-\frac{\big(1-3w^{d(P)}+3(uw)^{d(P)}-3(uw^2)^{d(P)}\big)}{|P|}-\frac{(uw^2)^{d(P)}\big(3-w^{d(P)}+(uw)^{d(P)}\big)}{|P|^2}\bigg)\\
&\qquad\bigg(1+\frac{3w^{d(P)}\big(1-u^{d(P)}+(uw)^{d(P)}\big)}{|P|}-\frac{u^{-d(P)}+(uw^2)^{d(P)}\big(6-w^{d(P)}+(uw)^{d(P)}\big)}{|P|^2}\\
&\qquad\qquad+\frac{3w^{2d(P)}\big(1+(uw)^{2d(P)}\big)}{|P|^3}-\frac{(uw^4)^{d(P)}\big(3+(uw)^{2d(P)}\big)}{|P|^4}+\frac{(uw^3)^{2d(P)}}{|P|^5}\bigg)^{-1}.
\end{align*}
 Using the above, we obtain
\begin{align}\label{S3integral}
&S_{3}^{\textrm{e}}(\ell;N;V=\square) =- \frac{q}{(q-1)|\ell_1|^{1/2}} \frac{1}{(2 \pi i)^2} \oint_{|u|=r_1}  \oint_{|w|=r_2} \\
&\qquad\qquad \frac{(1-uw)^3\kappa_3(\ell;u,w) dwdu }{u^{(N+d(\ell_1)-\mathfrak{a})/2-g}w^{N+1-\mathfrak{a}}(1-u)(1-w)^3 (1-uw^2)^7}+O\big(q^{-g/2+\varepsilon g}\big).\nonumber
\end{align}
Note that $\kappa_3(1;u,w)$ is the same as $\mathcal{T}(u,w/q)$ in Lemma 7.4 of [\textbf{\ref{F23}}]. As a result [\textbf{\ref{F23}}], $\kappa_3(\ell;u,w)$ is absolutely convergent for $|u| > 1/q$, $|w| < q^{1/2}$, $|uw| <q^{1/2}$ and $|uw^2| <q^{1/2}$. Moreover, $\kappa_3(\ell;u,1)$ has an analytic continuation when $1/q<|u|<q$.

We proceed as in the case $k=2$. First we move the contour $|u|=r_1$ to $|u|=r_1'=q^{-1+\varepsilon}$, and then the contour $|w|=r_2$ to $|w|=r_2'=q^{1/2-\varepsilon}$ in the equation \eqref{S3integral}. In doing so, we cross a triple pole at $w=1$. On the new contours, the integral is bounded by $O_\varepsilon(q^{-g+\varepsilon g})$. Hence, by expanding the terms in their Laurent series,
\begin{align*}
S_{3}^{\textrm{e}}(\ell,N;V=\square) &=\ \frac{q}{(q-1) |\ell_1|^{\frac{1}{2}}} \frac{1}{2 \pi i} \oint_{|u| = r_1'} \frac{\kappa_3(\ell;u,1)}{u^{(N+d(\ell_1)-\mathfrak{a})/2-g}(1-u)^7}\nonumber \\
&\quad\sum_{i_1=0}^{2}\sum_{i_2=0}^{2-i_1}\frac{\partial_w^{i_1}\kappa_3}{\kappa_3}(\ell;u,1)Q_{3,i_1,i_2}(\mathfrak{a},u)(1-u)^{i_1+i_2}N^{i_2} du + O_\varepsilon(q^{-g/2+\varepsilon g}),
\end{align*}
where $Q_{3,i,j}(\mathfrak{a},u)$'s are some explicit functions and are analytic as functions of $u$.

Next we move the $u$-contour to $|u|=q^{1-\varepsilon}$. We encounter a pole at $u=1$ and we bound the new integral by $O_\varepsilon(|\ell_1|^{-1}q^{-g/2+\varepsilon g})$. For the residue at $u=1$, we calculate the Taylor series of the terms in the integrand and get
\begin{align*}
\textrm{Res}(u=1)=&\kappa_3(\ell;1,1)\sum_{i_1=0}^{2}\sum_{i_2=0}^{2-i_1}\sum_{j=0}^{6-i_1-i_2}\frac{\partial_u^j\partial_w^{i_1}\kappa_3}{\kappa_3}(\ell;1,1)R_{3,i_1,i_2,j}(\mathfrak{a},g+d(\ell_1))N^{i_2},
\end{align*}
where $R_{3,i_1,i_2,j}(\mathfrak{a},x)$'s are explicit polynomials in $x$ with degree $6-i_1-i_2-j$. Thus,
\begin{align*}
S_{3}^{\textrm{e}}(\ell;V=\square)=&\frac{\kappa_3(\ell;1,1)q}{(q-1) |\ell_1|^{1/2}}\sum_{N=3g-1}^{3g}\sum_{i_1=0}^{2}\sum_{i_2=0}^{2-i_1}\sum_{j=0}^{6-i_1-i_2}\frac{\partial_u^j\partial_w^{i_1}\kappa_3}{\kappa_3}(\ell;1,1)R_{3,i_1,i_2,j}(\mathfrak{a},g+d(\ell_1))N^{i_2}\\
&\qquad\qquad+ O_\varepsilon(q^{-g/2+\varepsilon g})+O_\varepsilon(|\ell_1|^{-3/4}q^{-g/4+\varepsilon g}).
\end{align*}

As for the leading term, as before we can show that
\[
\kappa_3(\ell;1,1)=\frac{\eta_3(\ell;1)}{\zeta_q(2)} ,\qquad\frac{\partial_u^j\partial_w^{i_1}\kappa_3}{\kappa_3}(\ell;1,1)\ll_{i_1,j,\varepsilon}d(\ell)^\varepsilon,
\]
and so
\begin{align*}
S_{3}^{\textrm{e}}(\ell;V=\square)&=\frac{\eta_3(\ell;1)}{ |\ell_1|^{1/2}}\sum_{N=3g-1}^{3g}\sum_{i_2=0}^{2}R_{3,0,i_2,0}(\mathfrak{a},g+d)N^{i_2}+O_\varepsilon\big(g^5d(\ell)^\varepsilon\big)\\
&= \frac{\eta_3(\ell;1)}{ |\ell_1|^{\frac{1}{2}}}\sum_{N=3g-1}^{3g}\sum_{i_2=0}^{2} \frac{1}{2 \pi i} \oint_{|u| = r_1'} \frac{Q_{3,0,i_2}(\mathfrak{a},1)(1-u)^{i_2}N^{i_2} du}{u^{(g+d(\ell_1))/2}(1-u)^7}\nonumber \\
&\qquad\qquad  +O_\varepsilon\big(g^5d(\ell)^\varepsilon\big).
\end{align*}
Expanding the terms in \eqref{S3integral} in the Laurent series
\begin{align*}
&(1-uw)^3=(1-u)^3-3u(1-u)^2(w-1)+3u^2(1-u)(w-1)^2\ldots,\\
&w^{-N}=1-N(w-1)+\frac{N(N+1)}{2}(w-1)^2\ldots,\\
&(1-uw^2)^{-7}=(1-u)^{-7}+14u(1-u)^{-8}(w-1)+14u(1-u)^{-9}(1+7u)(w-1)^2\ldots,
\end{align*}
we see that
\begin{align*}
S_{3}^{\textrm{e}}(\ell;V=\square)&= \frac{\eta_3(\ell;1)}{ |\ell_1|^{\frac{1}{2}}}\sum_{N=3g-1}^{3g} \frac{1}{2 \pi i} \oint_{|u| = r_1'} \frac{1}{u^{(g+d(\ell_1))/2}(1-u)^7}\nonumber \\
&\qquad\qquad  \bigg(73-11(1-u)N+\frac{(1-u)^2N^2}{2}\bigg)du+O_\varepsilon\big(g^5d(\ell)^\varepsilon\big)\\
&=-\frac{\eta_3(\ell;1)}{2^56! |\ell_1|^{\frac{1}{2}}}(g+d)^4\Big(73(g+d)^2-396g(g+d)+540g^2\Big)+O_\varepsilon\big(g^5d(\ell)^\varepsilon\big).
\end{align*}

\subsection{Bounding $S_k(\ell;N;V \neq \square)$}\label{Vnonsquare}
Recall from \eqref{Sknonsquare} that 
$$ S_k(\ell;N;V \neq \square) = S_{k}^{\textrm{e}}(\ell;N;V\ne\square) +S_{k}^{\textrm{o}}(\ell;N) , $$ with $S_{k}^{\textrm{e}}(\ell;N;V\ne\square) = S_{k,1}^{\textrm{e}}(\ell;N;V\ne\square)-qS_{k,2}^{\textrm{e}}(\ell;N;V\ne\square)$ and $S_{k}^{\textrm{o}}(\ell;N) = S_{k,1}^{\textrm{o}}(\ell;N)-qS_{k,2}^{\textrm{o}}(\ell;N)$, and $S_{k,1}^{\textrm{o}}(\ell;N)$ is given by equation \eqref{s1odd}. We will focus on bounding $S_{k,1}^{\textrm{o}}(\ell;N)$, since bounding the other ones follow similarly.

Using the fact that for $r_1<1$,
$$\sum_{\substack{C \in \mathcal{M}_j \\ C| (f \ell)^{\infty}}} \frac{1}{|C|^2} = \frac{1}{2 \pi i} \oint_{|u|=r_1} q^{-2j}   \prod_{P | f \ell} \big(1-u^{d(P)}\big)^{-1}\, \frac{du}{u^{j+1}},$$ and writing $V=V_1V_2^2$ with $V_1$ a square-free polynomial, we have
\begin{align*}
S_{k,1}^{\textrm{o}}(\ell;N) &= \frac{q^{3/2}}{(q-1)|\ell|} \frac{1}{2 \pi i} \oint_{|u|=r_1} \sum_{\substack{n\leq N \\ n +d(\ell) \text{ odd}}}\sum_{j=0}^g  \sum_{\substack{r\leq n+d(\ell)-2g+2j-2 \\ r \text{ odd}}}q^{-2j} \\
& \qquad \sum_{V_1 \in \mathcal{H}_r} \sum_{V_2 \in \mathcal{M}_{(n+d(\ell)-r)/2-g+j-1}} \sum_{f \in \mathcal{M}_n} \frac{\tau_k(f) G(V_1V_2^2, \chi_{f \ell})}{|f|^{3/2} }\prod_{P | f \ell} \big(1-u^{d(P)}\big)^{-1} \, \frac{du}{u^{j+1}}.
\end{align*}
Now
\begin{align*}
\sum_{f \in \mathcal{M}}   \frac{\tau_k(f) G(V_1V_2^2, \chi_{f \ell})}{|f|^{3/2}}  \prod_{P | f \ell} \big(1-u^{d(P)}\big)^{-1}w^{d(f)}=  \mathcal{H}(V,\ell;u,w) \mathcal{J}(V,\ell;u,w)\mathcal{K}(V_1 ; u,w), 
\end{align*} where
$$ \mathcal{H}(V,\ell;u,w) = \prod_{P | \ell} \bigg( \sum_{j=0}^{\infty} \frac{\tau_k(P^j) G(V,\chi_{P^{j+\text{ord}_P(\ell)}}) w^{j d(P)}}{|P|^{3j/2}}   \bigg) \big(1-u^{d(P)}\big)^{-1},$$
\begin{align*}
\mathcal{J}(V,\ell;u,w) &=  \prod_{\substack{P \nmid \ell\\P |V  }} \bigg( 1+ \sum_{j=1}^{\infty} \frac{\tau_k(P^j) G(V, \chi_{P^j}) w^{j d(P)}}{|P|^{3j/2}} \big(1-u^{d(P)}\big)^{-1}  \bigg)   \\
& \qquad\qquad \prod_{\substack{P \nmid V_1\\P | \ell V_2 }}\Big( 1+ \frac{k \chi_{V_1}(P) w^{d(P)}}{|P|} \big(1-u^{d(P)}\big)^{-1}\Big)^{-1}
\end{align*}
 and
$$ \mathcal{K}(V_1;u,w) =  \prod_{P \nmid V_1} \bigg( 1+ \frac{k \chi_{V_1}(P) w^{d(P)}}{|P|}\big(1-u^{d(P)}\big)^{-1} \bigg). $$ 
We use the Perron formula for the sum over $f$ and obtain
\begin{align*}
S_{k,1}^{\textrm{o}}(\ell;N)  &= \frac{q^{3/2}}{(q-1)|\ell|} \frac{1}{(2 \pi i)^2} \oint_{|u|=q^{-\varepsilon}} \oint_{|w| = q^{1/2-\varepsilon}} \sum_{\substack{n\leq N \\ n +d(\ell) \text{ odd}}}  \sum_{j=0}^g  \sum_{\substack{r\leq n+d(\ell)-2g+2j-2 \\ r \text{ odd}}} q^{-2j}\\
& \qquad \sum_{V_1 \in \mathcal{H}_r} \sum_{V_2 \in \mathcal{M}_{(n+d(\ell)-r)/2-g+j-1}}  \mathcal{H}(V,\ell;u,w) \mathcal{J}(V,\ell;u,w)\mathcal{K}(V_1;u,w) \,  \frac{du}{u^{j+1}} \, \frac{dw}{w^{n+1}}.
\end{align*}

 Let $j_0$ be minimal such that $|wu^{j_0}| < 1$. Then we write
\begin{equation}
\mathcal{K}(V_1;u,w) =  \mathcal{L}\Big(\frac wq,\chi_{V_1}\Big)^k \mathcal{L}\Big(\frac{uw}{q},\chi_{V_1}\Big)^k \cdot \ldots \cdot \mathcal{L}\Big(\frac{u^{j_0-1}w}{q}, \chi_{V_1}\Big)^k \mathcal{T}(V_1;u,w), \label{expr}
\end{equation} where $\mathcal{T}(V_1;u,w)$ is absolutely convergent in the selected region. We also have $$ \mathcal{J}(V,\ell;u,w)  \ll 1$$ and similarly as in the proof of Lemma $5.3$ in [\textbf{\ref{S}}],
$$ \mathcal{H} (V,\ell;u,w)  \ll_\varepsilon  |\ell|^{1/2+\varepsilon} \big| (\ell, V_2^2)\big|^{1/2}|V|^{\varepsilon}.$$
We trivially bound the sum over $V_2$. Then we use \eqref{expr} and upper bounds for moments of $L$--functions (see Theorem $2.7$ in [\textbf{\ref{F4}}]) to get that
$$ \sum_{V_1 \in \mathcal{H}_r} \bigg| \mathcal{L}\Big(\frac wq,\chi_{V_1}\Big) \mathcal{L}\Big(\frac{uw}{q},\chi_{V_1}\Big) \cdot \ldots \cdot \mathcal{L}\Big(\frac{u^{j_0-1}w}{q}, \chi_{V_1}\Big) \bigg|^k \ll_\varepsilon q^{r} r^{k(k+1)/2+\varepsilon} .$$
Alternatively, one can use a Lindel\"{o}f type bound for each $L$--function to get the weaker upper bound of $q^{r+\varepsilon r}$ for the expression above. 
Trivially bounding the rest of the expression, we obtain that
$$ S_{k,1}^{\textrm{o}}(\ell;N) \ll_\varepsilon |\ell|^{1/2} q^{N/2 -2g+ \varepsilon g}.$$ 
Hence $$S_k(\ell;N;V \neq \square) \ll_\varepsilon |\ell|^{1/2}q^{(k-4)g/2+ \varepsilon g}.$$

\section{Moments of the partial Hadamard product}\label{momentsZ}

\subsection{Random matrix theory model}

Recall that
\begin{equation*}
Z_{X}(s,\chi_D)=\exp\Big(-\sum_{\rho}U\big((s-\rho)\ X\big)\Big),
\end{equation*}
where
\begin{equation*}
U(z)=\int_{0}^{\infty}u(x)E_{1}(z\log x)dx.
\end{equation*}
Denote the zeros by $\rho=1/2+i\gamma$. Since $E_1(-ix)+E_1(ix)=-2\textrm{Ci}(|x|)$ for $x\in\mathbb{R}$, where $\textrm{Ci}(z)$ is the cosine integral,
\[
\textrm{Ci}(z)=-\int_{z}^{\infty}\frac{\cos(x)}{x}dx,
\]
we have
\begin{equation}\label{rmt}
\Big\langle Z_X(\chi_D)^k \Big\rangle_{\mathcal{H}_{2g+1}}=\bigg\langle \prod_{\gamma>0}\exp\Big(2k\int_{0}^{\infty}u(x)\textrm{Ci}\big(\gamma X(\log x) \big)dx\Big) \bigg\rangle_{\mathcal{H}_{2g+1}}.
\end{equation}

We model the right hand side of \eqref{rmt} by replacing the ordinates $\gamma$ by the eigenangles of a $2g\times 2g$ symplectic unitary matrix and averaging over all such matrices with respect to the Haar measure. The $k$-moment of $Z_X(\chi_D)$ is thus expected to be asymptotic to
\[
\mathbb{E}_{2g}\bigg[\prod_{n=1}^{g}\exp\Big(2k\int_{0}^{\infty}u(x)\textrm{Ci}\big(\theta_nX(\log x) \big)dx\Big)\bigg],
\]
where $\pm\,\theta_n$ with $0\leq\theta_1\leq\ldots\leq\theta_{g}\leq\pi$ are the $2g$ eigenangles of the random matrix and $\mathbb{E}_{2g}[.]$ denotes the expectation with respect to the Haar measure. It is convenient to have our function periodic, so we instead consider 
\begin{equation}\label{rmt1}
\mathbb{E}_{2g}\bigg[\prod_{n=1}^{g}\phi(\theta_n)\bigg],
\end{equation}
where
\begin{align*}
\phi(\theta)&=\exp\bigg(2k\int_{0}^{\infty}u(x)\Big(\sum_{j=-\infty}^{\infty}\textrm{Ci}\big(|\theta+2\pi j|X(\log x) \big)\Big)dx\bigg)\\
&=\Big|2\sin\frac \theta 2\Big|^{2k}\exp\bigg(2k\int_{0}^{\infty}u(x)\Big(\sum_{j=-\infty}^{\infty}\textrm{Ci}\big(|\theta+2\pi j|X(\log x) \big)\Big)dx-2k\log\Big|2\sin\frac \theta 2\Big| \bigg).
\end{align*}
The average \eqref{rmt1} over the symplectic group has been asymptotically evaluated in [\textbf{\ref{DIK}}] and we have
\[
\mathbb{E}_{2g}\bigg[\prod_{n=1}^{g}\phi(\theta_n)\bigg]\sim \frac{G(k+1)\sqrt{\Gamma(k+1)}}{\sqrt{G(2k+1)\Gamma(2k+1)}}\Big(\frac{2g}{e^\gamma X}\Big)^{k(k+1)/2}.
\]

\subsection{Proof of Theorem \ref{k123}}

What most important to us in evaluating the moments of $Z_X(\chi_D)$ is the leading term coming from the twisted moments. Theorem \ref{tfm}, Theorem \ref{tsm}, Theorem \ref{ttm} and \eqref{eta} show that
\begin{align*}
\Big\langle L(\tfrac12,\chi_D)^k\chi_D(\ell) \Big\rangle_{\mathcal{H}_{2g+1}}=&\ \frac{c_k\mathcal{A}_k\mathcal{B}_k(\ell_1)\mathcal{C}_k(\ell_1,\ell_2)}{2^{k(k+1)/2-1}(k(k+1)/2)!|\ell_1|^{1/2}}\big(kg\big)^{k(k+1)/2}\\
&\qquad\qquad+O\Big(\frac{\tau_k(\ell_1)}{|\ell_1|^{1/2}}g^{k(k+1)/2-1}d(\ell)\Big)+O_\varepsilon\big(|\ell|^{1/2}q^{(k-4)g/2+\varepsilon g}\big)
\end{align*}
with $c_1=c_2=1$ and $c_3=2^9/3^6$, where 
\begin{align*}
&\mathcal{A}_{k}=\prod_P\bigg(1-\frac{1}{|P|}\bigg)^{k(k+1)/2}\bigg(1+\bigg(1+\frac{1}{|P|}\bigg)^{-1}\sum_{j=1}^{\infty}\frac{\tau_{k}(P^{2j})}{|P|^j}\bigg),\\
&\mathcal{B}_{k}(\ell_1)=\prod_{P|\ell_1}\bigg(\sum_{j=0}^{\infty}\frac{\tau_k(P^{2j+1})}{|P|^j}\bigg)\bigg(1+\frac{1}{|P|}+\sum_{j=1}^{\infty}\frac{\tau_{k}(P^{2j})}{|P|^j}\bigg)^{-1},\\
&\mathcal{C}_{k}(\ell_1,\ell_2)=\prod_{\substack{P\nmid \ell_1\\P|\ell_2}}\bigg(\sum_{j=0}^{\infty}\frac{\tau_k(P^{2j})}{|P|^j}\bigg)\bigg(1+\frac{1}{|P|}+\sum_{j=1}^{\infty}\frac{\tau_{k}(P^{2j})}{|P|^j}\bigg)^{-1}.
\end{align*}
Combining with \eqref{P*} we get
\begin{align*}
\Big\langle L(\tfrac12,\chi_D)^kP_{-k,X}^{*}(\chi_{D}) \Big\rangle_{\mathcal{H}_{2g+1}}&=\, J_{k,1}+J_{k,2}+O_\varepsilon\big(q^{\vartheta g+(k-4)g/2+\varepsilon g}\big)+O_\varepsilon(q^{-c\vartheta g/4+\varepsilon g})
\end{align*}
for any $\vartheta>0$, where
\begin{equation}\label{Jk1}
J_{k,1}=\frac{2c_k\mathcal{A}_k}{(k(k+1)/2)!}\Big(\frac{kg}{2}\Big)^{k(k+1)/2}\sum_{\substack{\ell_1,\ell_2\in S(X)\\ \ell_1\ \textrm{square-free}\\ d(\ell_1)+2d(\ell_2)\leq\vartheta g}}\frac{\mathcal{B}_k(\ell_1)\mathcal{C}_k(\ell_1,\ell_2)\alpha_{-k}(\ell_1\ell_2^2)}{|\ell_1||\ell_2|}
\end{equation}
and
\begin{align}\label{Jk2}
J_{k,2}&\ll  g^{k(k+1)/2-1}\sum_{\ell_1,\ell_2\in S(X)}\frac{\tau_k(\ell_1)\tau_{k}(\ell_1\ell_2^2)\big(d(\ell_1)+2d(\ell_2)\big)}{|\ell_1||\ell_2|}\nonumber\\
&\ll g^{k(k+1)/2-1}\sum_{\ell_2\in S(X)}\frac{\tau_{k}(\ell_2^2)d(\ell_2)}{|\ell_2|}\sum_{\ell_1\in S(X)}\frac{\tau_k(\ell_1)^2d(\ell_1)}{|\ell_1|},
\end{align}
as $\alpha_{-k}(\ell_1\ell_2^2)\ll\tau_k(\ell_1\ell_2^2)\ll\tau_k(\ell_1)\tau_{k}(\ell_2^2)$.

We first consider the error term $J_{k,2}$. Let
\begin{displaymath}
F(\sigma)=\sum_{\ell\in S(X)}\frac{\tau_k(\ell)^2}{|\ell|^\sigma}=\prod_{d(P)\leq X}\bigg(\sum_{j=0}^{\infty}\frac{\tau_k(P^j)^2}{|P|^{j\sigma}}\bigg).
\end{displaymath}
Then $F(1)\asymp\prod_{d(P)\leq X}(1-1/|P|)^{-k^2}\asymp X^{k^2}$. We note that the sum over $\ell_1$ in \eqref{Jk2} is
\[
-\frac{F'(1)}{\log q}=F(1)\sum_{d(P)\leq X}\frac{\sum_{j=0}^{\infty}j\tau_k(P^j)^2d(P)/|P|^{j}}{\sum_{j=0}^{\infty}\tau_k(P^j)^2/|P|^{j}},
\]
and hence it is
\[
\ll X^{k^2}\sum_{d(P)\leq X}\frac{d(P)}{|P|}\ll X^{k^2+1}.
\]
Similarly we have $$\sum_{\ell_2\in S(X)}\frac{\tau_{k}(\ell_2^2)d(\ell_2)}{|\ell_2|}\ll X^{k(k+1)/2+1},$$ and hence $$J_{k,2}\ll g^{k(k+1)/2-1}X^{k(3k+1)/2+2}.$$

For the main term $J_{k,1}$, recall that the function $\alpha_{-k}(\ell)$ is given by
\begin{equation}\label{alpha}
\sum_{\ell\in\mathcal{M}}\frac{\alpha_{-k}(\ell)\chi_{D}(\ell)}{|\ell|^s}=\prod_{d(P)\leq X/2}\bigg( 1-\frac{\chi_D(P)}{|P|^s}\bigg) ^{k}\prod_{X/2<d(P)\leq X}\bigg(1-\frac{k\chi_{D}(P)}{|P|^s}+\frac{k^2\chi_{D}(P)^2}{2|P|^{2s}}\bigg) .
\end{equation}
So for $1\leq k\leq 3$, the function $\alpha_{-k}(\ell)$ is supported on quad-free polynomials. As a result, if we let
\[
\mathcal{P}_X=\prod_{d(P)\leq X}P,
\]
then for the sum over $\ell_1,\ell_2$ in \eqref{Jk1} we can write $\ell_1=\ell_1'\ell_3$ and $\ell_2=\ell_2'\ell_3$, where $\ell_1',\ell_2',\ell_3$ are all square-free, i.e. $\ell_1',\ell_2',\ell_3|\mathcal{P}_X$, and $\ell_1',\ell_2',\ell_3$ are pairwise co-prime. Hence 
\begin{align*}
J_{k,1}=&\frac{2c_k\mathcal{A}_k}{(k(k+1)/2)!}\Big(\frac{kg}{2}\Big)^{k(k+1)/2}\sum_{\substack{\ell_3|\mathcal{P}_X\\ d(\ell_3)\leq\vartheta g/3}}\frac{\mathcal{B}_k(\ell_3)\alpha_{-k}(\ell_3^3)}{|\ell_3|^2}\\
&\qquad\qquad\sum_{\substack{\ell_2|(\mathcal{P}_X/\ell_3)\\ d(\ell_2)\leq(\vartheta g-3d(\ell_3))/2}}\frac{\mathcal{C}_k(\ell_2)\alpha_{-k}(\ell_2^2)}{|\ell_2|}\sum_{\substack{\ell_1|(\mathcal{P}_X/\ell_2\ell_3)\\ d(\ell_1)\leq\vartheta g-2d(\ell_2)-3d(\ell_3)}}\frac{\mathcal{B}_k(\ell_1)\alpha_{-k}(\ell_1)}{|\ell_1|},
\end{align*}
where
\[
\mathcal{C}_k(\ell_2)=\mathcal{C}_k(1,\ell_2)=\prod_{P|\ell_2}\bigg(\sum_{j=0}^{\infty}\frac{\tau_k(P^{2j})}{|P|^j}\bigg)\bigg(1+\frac{1}{|P|}+\sum_{j=1}^{\infty}\frac{\tau_{k}(P^{2j})}{|P|^j}\bigg)^{-1}.
\]

Like in \eqref{truncation}, we can remove the condition $d(\ell_1)+2d(\ell_2)+3d(\ell_3)\leq\vartheta g$ at the cost of an error of size $O_\varepsilon\big(q^{-\vartheta g/2+\varepsilon g}\big)$. Define the following multiplicative functions
\begin{align*}
T_1(f)=\sum_{\ell|f}\frac{\mathcal{B}_k(\ell)\alpha_{-k}(\ell)}{|\ell|},\qquad T_2(f)=\sum_{\ell|f}\frac{\mathcal{C}_k(\ell)\alpha_{-k}(\ell^2)}{|\ell|T_1(\ell)}
\end{align*}
and
\[
T_3(f)=\sum_{\ell|f}\frac{\mathcal{B}_k(\ell)\alpha_{-k}(\ell^3)}{|\ell|^2T_1(\ell)T_2(\ell)}.
\]
Then
\begin{align*}
J_{k,1}=&\ \frac{2c_k\mathcal{A}_k}{(k(k+1)/2)!}\Big(\frac{kg}{2}\Big)^{k(k+1)/2}T_1\big(\mathcal{P}_X\big)T_2\big(\mathcal{P}_X\big)T_3\big(\mathcal{P}_X\big)\\
=&\ \frac{2c_k\mathcal{A}_k}{(k(k+1)/2)!}\Big(\frac{kg}{2}\Big)^{k(k+1)/2}\\
&\qquad\prod_{d(P)\leq X}\bigg(1+\frac{\mathcal{B}_k(P)\alpha_{-k}(P)}{|P|}+\frac{\mathcal{C}_k(P)\alpha_{-k}(P^2)}{|P|}+\frac{\mathcal{B}_k(P)\alpha_{-k}(P^3)}{|P|^2}\bigg).
\end{align*}
We remark that
\[
\mathcal{A}_k=\Big(1+O\big(q^{-X}/X\big)\Big)\prod_{d(P)\leq X}\bigg(1-\frac{1}{|P|}\bigg)^{k(k+1)/2}\bigg(1+\frac{1}{|P|}\bigg)^{-1}\mathcal{A}_{k}(P),
\]
where
\[
\mathcal{A}_{k}(P)=1+\frac{1}{|P|}+\sum_{j=1}^{\infty}\frac{\tau_{k}(P^{2j})}{|P|^j}.
\]
So
\begin{align*}
&J_{k,1}=\Big(1+O\big(q^{-X}/X\big)\Big) \frac{2c_k}{(k(k+1)/2)!}\Big(\frac{kg}{2}\Big)^{k(k+1)/2}\prod_{d(P)\leq X}\bigg(1-\frac{1}{|P|}\bigg)^{k(k+1)/2}\bigg(1+\frac{1}{|P|}\bigg)^{-1}\\
&\ \prod_{d(P)\leq X}\bigg(\mathcal{A}_{k}(P)+\frac{\mathcal{A}_{k}(P)\mathcal{B}_k(P)\alpha_{-k}(P)}{|P|}+\frac{\mathcal{A}_{k}(P)\mathcal{C}_k(P)\alpha_{-k}(P^2)}{|P|}+\frac{\mathcal{A}_{k}(P)\mathcal{B}_k(P)\alpha_{-k}(P^3)}{|P|^2}\bigg).
\end{align*}
We note from \eqref{alpha} that $\alpha_{-k}(P)=-k$. Also if $P\in\mathcal{P}$ with $d(P)\leq X/2$, then 
$$\alpha_{-k}(P^2)=\frac{k(k-1)}{2}\qquad \textrm{and}\qquad\alpha_{-k}(P^3)=-\frac{k(k-1)(k-2)}{6},$$ and if $P\in\mathcal{P}$ with $X/2<d(P)\leq X$, then
$$\alpha_{-k}(P^2)=\frac{k^2}{2}\qquad \textrm{and}\qquad\alpha_{-k}(P^3)=0.$$ Standard calculations also give
\[
\mathcal{A}_{k}(P)=\begin{cases}
\big(1-\frac{1}{|P|}\big)^{-1}\big(1+\frac{1}{|P|}-\frac{1}{|P|^2}\big)& k=1,\\
\big(1-\frac{1}{|P|}\big)^{-2}\big(1+\frac{2}{|P|}-\frac{2}{|P|^2}+\frac{1}{|P|^3}\big) & k=2,\\
\big(1-\frac{1}{|P|}\big)^{-3}\big(1+\frac{4}{|P|}-\frac{3}{|P|^2}+\frac{3}{|P|^3}-\frac{1}{|P|^4}\big) & k=3,
\end{cases}
\]
\[
\mathcal{A}_{k}(P)\mathcal{B}_k(P)=\begin{cases}
\big(1-\frac{1}{|P|}\big)^{-1} & k=1,\\
2\big(1-\frac{1}{|P|}\big)^{-2} & k=2,\\
\big(1-\frac{1}{|P|}\big)^{-3}\big(3+\frac{1}{|P|}\big) & k=3
\end{cases}
\]
and
\[
\mathcal{A}_{k}(P)\mathcal{C
}_k(P)=\begin{cases}
\big(1-\frac{1}{|P|}\big)^{-1} & k=1,\\
\big(1-\frac{1}{|P|}\big)^{-2}\big(1+\frac{1}{|P|}\big) & k=2,\\
\big(1-\frac{1}{|P|}\big)^{-3}\big(1+\frac{3}{|P|}\big) & k=3.
\end{cases}
\]
Hence, using Lemma \ref{mertens} we get
\begin{align*}
J_{1,1}&=\Big(1+O\big(q^{-X}/X\big)\Big)g\prod_{d(P)\leq X}\bigg(1+\frac{1}{|P|}\bigg)^{-1}\\
&\qquad\qquad\prod_{d(P)\leq X/2}\bigg(1-\frac{1}{|P|^2}\bigg)\prod_{X/2<d(P)\leq X}\bigg(1+\frac{1}{2|P|}-\frac{1}{|P|^2}\bigg)\\
&=\Big(1+O\big(q^{-X/2}/X\big)\Big) g\prod_{d(P)\leq X/2}\bigg(1-\frac{1}{|P|}\bigg)\prod_{X/2<d(P)\leq X}\bigg(1-\frac{1}{|P|}\bigg)^{1/2}\\
&= \frac{1}{\sqrt{2}}\frac{g}{e^\gamma X/2}+O\big(gX^{-2}\big)
\end{align*}
and
\begin{align*}
J_{2,1}&=\Big(1+O\big(q^{-X}/X\big)\Big)\frac{g^3}{3}\prod_{d(P)\leq X}\bigg(1-\frac{1}{|P|}\bigg)\bigg(1+\frac{1}{|P|}\bigg)^{-1}\\
&\qquad\qquad\prod_{d(P)\leq X/2}\bigg(1-\frac{1}{|P|}-\frac{1}{|P|^2}+\frac{1}{|P|^3}\bigg)\prod_{X/2<d(P)\leq X}\bigg(1+\frac{1}{|P|^3}\bigg)\\
&=\Big(1+O\big(q^{-X/2}/X\big)\Big) \frac{g^3}{3}\prod_{d(P)\leq X/2}\bigg(1-\frac{1}{|P|}\bigg)^3\prod_{X/2<d(P)\leq X}\bigg(1-\frac{1}{|P|}\bigg)^{2}\\
&=\frac{1}{12} \Big(\frac{g}{e^\gamma X/2}\Big)^3+O\big(g^3X^{-4}\big).
\end{align*}
Lastly,
\begin{align*}
J_{3,1}&=\Big(1+O\big(q^{-X}/X\big)\Big)\frac{g^6}{45}\prod_{d(P)\leq X}\bigg(1-\frac{1}{|P|}\bigg)^3\bigg(1+\frac{1}{|P|}\bigg)^{-1}\\
&\qquad\qquad\prod_{d(P)\leq X/2}\bigg(1-\frac{2}{|P|}+\frac{2}{|P|^3}-\frac{1}{|P|^4}\bigg)\prod_{X/2<d(P)\leq X}\bigg(1-\frac{1}{2|P|}+O\bigg(\frac{1}{|P|^2}\bigg)\bigg)\\
&=\Big(1+O\big(q^{-X/2}/X\big)\Big) \frac{g^6}{45}\prod_{d(P)\leq X/2}\bigg(1-\frac{1}{|P|}\bigg)^6\prod_{X/2<d(P)\leq X}\bigg(1-\frac{1}{|P|}\bigg)^{9/2}\\
&=\frac{1}{720\sqrt{2}} \Big(\frac{g}{e^\gamma X/2}\Big)^6+O\big(g^6X^{-7}\big).
\end{align*}
The theorem follows by choosing any $0<\vartheta <(4-k)/2$.

\end{document}